\newcommand{\ra}{\rightarrow}
\newcommand{\nono}{\nonumber}
\newcommand{\mi}{\setminus}
\newcommand{\lra}{\longrightarrow}
\newcommand{\ov}{\overline}
\newcommand{\mas}{\mathcal{S}}
\newcommand{\maj}{\mathcal{G}}
\newcommand{\mfn}{\mathfrak{N}}
\newcommand{\dlambda}{\delta}
\newcommand{\id}{\operatorname{id}}
\newcommand{\lbr}{[}
\newcommand{\rbr}{]}
\newcommand{\nn}{\mathbb N}
\newcommand{\zz}{\mathbb Z}
\newcommand{\conggr}{\cong_{\mathrm{gr}}}
\newcommand{\Ga}{\Gamma}
\newcommand{\ga}{\gamma}
\newcommand{\al}{\alpha}
\newcommand{\de}{\delta}
\newcommand{\la}{\lambda}
\newcommand{\si}{\sigma}
\newcommand{\ep}{\varepsilon}
\newcommand{\De}{\Delta}
\newcommand{\ph}{\varphi}
\newcommand{\sbeq}{\subseteq}
\newcommand{\speq}{\supseteq}
\newcommand{\ti}{\widetilde}
\newcommand\K[1][1]{{\operatorname{K}_{#1}}}
\newcommand\CK[1][1]{\operatorname{CK}_{#1}}
\newcommand\SH{\operatorname{SH^0}}
\newcommand\SK[1][1]{\operatorname{SK}_{#1}}
\newcommand{\Nrd}[1][{}]{{\operatorname{Nrd}_{#1}}}
\newcommand{\Trd}[1][{}]{{\operatorname{Trd}_{#1}}}
\newcommand{\Tr}[1][{}]{{\operatorname{Tr}_{#1}}}
\newcommand{\GL}{\operatorname{GL}}
\newcommand{\Aut}{\operatorname{Aut}}
\newcommand{\chr}{\operatorname{char}}
\newcommand{\ind}{\operatorname{ind}}
\newcommand{\intt}{\operatorname{int}}
\newcommand{\Div}{\operatorname{Div}}
\newcommand{\End}{\operatorname{End}}
\newcommand{\Gal}{\mathcal G al}
\newcommand{\Inn}{\operatorname{Inn}}
\newcommand{\ann}{\operatorname{ann}}
\newcommand{\ddet}{\operatorname{ddet}}
\newcommand{\gr}{\text{gr}}
\newcommand{\im}{\text{im}}
\theoremstyle{plain}
\newtheorem {lemma}{Lemma}[section]
\newtheorem {theorem}[lemma]{Theorem}
\newtheorem {thm}[lemma]{Theorem}
\newtheorem {corollary}[lemma]{Corollary}
\newtheorem {cor}[lemma]{Corollary}
\newtheorem {proposition}[lemma]{Proposition}
\newtheorem {prop}[lemma]{Proposition}
\theoremstyle{remark}
\newtheorem* {remark}{Remark}  % for unnumbered remark
\newtheorem {rem}[lemma]{Remark} % for numbered remark
\newtheorem {remarks}[lemma]{Remarks} % for numbered remarks
\newtheorem {example}[lemma]{Example}
\theoremstyle{definition}
\newtheorem {deff}[lemma]{Definition} % for numbered definitions
\numberwithin{equation}{section}
\title{$\SK$ of graded division algebras}
\date{}
\author{R. Hazrat}\address{
Department of Pure Mathematics\\
Queen's University\\
Belfast BT7 1NN\\
United Kingdom} \email{r.hazrat@qub.ac.uk}
\author{A. R. Wadsworth}
\address{
Department of Mathematics\\
University of California at San Diego\\
La Jolla, California 92093-0112\\
U.S.A.}
 \email{arwadsworth@ucsd.edu}
\begin{document}

\begin{abstract}
The reduced Whitehead group $\SK$ of a graded division 
algebra graded by a torsion-free abelian group is studied. 
It is observed that the computations here are much more 
straightforward than in the non-graded setting. Bridges to 
the ungraded case are
then established by the following two theorems: It is
 proved that $\SK$ of a tame valued division algebra
over a henselian field 
coincides with $\SK$ of its associated graded division 
algebra. Furthermore, it is shown that $\SK$ of a graded 
division algebra is isomorphic to $\SK$ of its quotient 
division algebra. The first theorem gives the established 
formulas for the reduced Whitehead group of certain valued 
division algebras in a unified manner, whereas the latter 
theorem covers 
the stability of reduced Whitehead groups, and also describes 
$\SK$ for generic abelian crossed products. 
\end{abstract}

\maketitle
\tableofcontents

\section{Introduction}
Let $D$ be a division algebra with a valuation. To this 
one associates a graded division algebra 
$\gr(D)= \bigoplus_{\ga \in \Ga_D}\gr(D)_\gamma$, where 
$\Ga_D$ is the value group of $D$ and 
the summands $\gr(D)_\gamma$ arise from the filtration on $D$ 
induced by the valuation
(see \S\ref{prel} for details).   As is illustrated in 
\cite{hwcor}, even though computations in the graded setting are 
often easier than working directly with $D$, 
it seems that not  much is lost in passage from $D$ to its 
corresponding graded division algebra $\gr(D)$.  
This has provided motivation to systematically study this 
correspondence, notably by Boulagouaz \cite{boulag}, Hwang, 
Tignol and Wadsworth \cite {hwalg,hwcor,tigwad}, and to
compare certain functors defined on these objects, 
notably the Brauer group.

In particular, the associated graded ring $\gr(D)$ is an Azumaya 
algebra (\cite{hwcor}, Cor.~1.2);  so the reduced norm map 
exists for it, and  one defines the reduced Whitehead group $\SK$ 
for $\gr(D)$ as 
the kernel of the reduced norm map and $\SH$ as its cokernel 
 (see \S\ref{reducedsec}). 
In this paper
 we study these groups for a graded division algebra.

Apart from the work of Panin and Suslin \cite{pansus} on 
$\SH$ for Azumaya algebras over semilocal regular rings 
and \cite{azumayask1} which studies $\SK$ for Azumaya 
algebras over henselian rings, it seems that not much is known 
about these groups in the setting of Azumaya algebras.   
Specializing to division algebras, however, there is an 
extensive literature on the group $\SK$. Platonov \cite{platonov} 
showed that $\SK$ could be non-trivial for certain  
division algebras over  henselian valued fields. He thereby 
provided a series of counter-examples to questions raised in the 
setting of algebraic 
groups, notably the Kneser-Tits conjecture.
  (For surveys on this work and the group $\SK$, see \cite{platsurvey}, \cite{gille},  \cite{merk} or  
\cite{wadval}, \S6.)

In this paper we first study the reduced Whitehead group $\SK$ of a 
graded division algebra whose grade group is   totally ordered 
abelian  (see \S\ref{reducedsec}). It can be  observed that the 
computations here are significantly easier and more transparent
than in the 
non-graded setting.  For a division algebra $D$ finite-dimensional
over a 
henselian valued field $F$, the valuation on $F$ extends uniquely 
to $D$ 
(see Th.~2.1 in \cite{wadval}, or \cite{wad87}), and 
the filtration on $D$ induced by the valuation yields an 
 associated graded division algebra $\gr(D)$. 
Previous work on the subject has shown that this 
transition to graded setting is most ``faithful'' when the 
valuation is tame.  Indeed, in Section~\ref{valued}, we show 
that for a tame valued division algebra $D$ over a henselian 
field, $\SK(D)$ coincides with $\SK(\gr(D))$ (Th.~\ref{sk1prop}).  
Having established this bridge between the 
graded setting and non-graded case, we will easily deduce 
known formulas in the literature for the reduced Whitehead 
group of  certain valued division algebras,  by passing to 
the graded setting;  this shows the utility  of the graded approach 
(see Cor. ~\ref{sk1appl}).

In the other direction, if $E= \bigoplus_{\ga \in \Ga_E}E_\gamma$ is a 
graded division algebra whose grade group $\Ga_E$ is  torsion-free 
abelian, then $E$  
has a quotient division algebra $q(E)$ which has the same 
index as~$E$. The same question on comparing the reduced 
Whitehead groups of these objects can also be raised here.    
It is known that when the 
grade group is $\mathbb  Z$, then $E$
has the simple form of a skew Laurent polynomial ring
$D[x,x^{-1},\varphi]$, where $D$ is a division algebra and 
$\varphi$ is an automorphism of $D$. In this setting the 
quotient division algebra of $D[x,x^{-1},\varphi]$
is $D(x,\varphi)$. In \cite{py}, 
Platonov and {Yanchevski\u\i}  compared $\SK(D(x,\varphi))$ with 
$\SK(D)$. In particular, they showed that if $\varphi$ is an 
inner automorphism then $\SK(D(x,\varphi))\cong \SK(D)$. In 
fact, if $\varphi$ is inner, then $D[x,x^{-1},\varphi]$ is an 
unramified graded  division algebra and we prove that 
$\SK(D[x,x^{-1},\varphi]) \cong \SK(D)$ (Th.~\ref{skunramthm}). 
By combining these, one concludes that the reduced Whitehead 
group of the graded division algebra $D[x,x^{-1},\varphi]$, 
where $\varphi$ is inner, coincides with $\SK$ of its quotient 
division algebra. In Section~\ref{skqd}, we show that this 
is a very special case of stability of $\SK$ for graded 
division algebras; namely, for any graded division algebra 
with torsion-free grade group, the reduced Whitehead 
group coincides with the reduced Whitehead group of its 
quotient division algebra.  This allows us to give a formula for 
$\SK$ for generic abelian crossed product algebras.

The paper is organized as follows: In Section~\ref{prel}, we 
gather  relevant background  on the theory of 
graded division algebras indexed by a totally ordered abelian 
group and establish several homomorphisms needed in the paper.   
Section~\ref{reducedsec}  studies the reduced Whitehead group $\SK$ 
of a 
graded division algebra. We establish 
analogues to Ershov's linked exact 
sequences~\cite{ershov} in the 
graded setting, easily  
deducing formulas for $\SK$ of unramified, totally
ramified, and  semiramified 
graded division algebras. In Section~\ref{valued}, we prove that 
$\SK$ of a tame division algebra over a henselian field 
coincides with $\SK$ of its associated graded division 
algebra. Section~\ref{skqd} is devoted to proving that  $\SK$ of a 
graded division algebra is isomorphic to $\SK$ of is quotient 
division algebra. We conclude the paper with two appendices. 
Appendix~\ref{weddapp} establishes the Wedderburn factorization 
theorem on the setting of 
graded division rings, namely  that the minimal polynomial of a 
homogenous element of a 
graded division ring $E$ splits completely over $E$
(Th.~\ref{domainW}). 
Appendix~\ref{apencongru} provides a complete proof of the 
Congruence Theorem for all tame 
 division algebras over henselian valued fields.
This theorem was originally proved by Platonov for the case of 
complete discrete valuations of rank~1, and it was a key tool 
in his calculations of  $\SK$ for certain valued division algebras.   

\section{Graded division algebras}\label{prel}

In this section we establish notation and 
recall some fundamental facts about graded division 
algebras indexed by a totally ordered abelian group, and 
about their connections with valued division algebras.
In addition, we
 establish some important  homomorphisms relating the group 
structure of a valued division algebra to the group 
structure of its associated graded division algebra.

Let
$R = \bigoplus_{ \ga \in \Gamma} R_{\ga}$ be a
graded ring, i.e., 
  $\Gamma$ is an abelian group,  and $R$ is a 
unital ring such that each $R_{\ga}$ is a
subgroup of $(R, +)$ and 
$R_{\ga} \cdot R_{\de} \subseteq R_{\ga +\de}$ 
for all $\ga, \de \in \Ga$. Set
\begin{itemize}
\item[] $\Gamma_R  \ = \  \{\ga \in \Ga \mid R_{\ga} \neq 0 \}$, 
 \  the grade set of $R$;
\vskip0.05truein
\item[] $R^h  \ = \ \bigcup_{\ga \in
\Ga_{R}} R_{\ga}$,  \ the set of homogeneous elements of $R$.
\end{itemize}
For a homogeneous element of $R$  of degree $\gamma$, i.e., an 
$r \in R_\gamma\mi{0}$, we write $\deg(r) = \gamma$.  
Recall  that $R_{0}$ is a subring of $R$ and that for each $\ga \in
\Ga_{R}$, the group $R_{\ga}$ is a left and right $R_{0}$-module.
A subring $S$ of
$R$ is a \emph{graded subring} if $S= \bigoplus_{ \ga \in
\Gamma_{R}} (S \cap R_{\ga})$.  For example, the 
center of $R$, denoted $Z(R)$, is a graded subring of 
$R$.
If $T = \bigoplus_{ \ga \in
\Gamma} T_\gamma$ is another graded ring, 
a {\it graded ring homomorphism} is a ring homomorphism 
$f\colon R\to T$ with $f(R_\gamma) \subseteq T_\gamma$
for all $\gamma \in \Gamma$.  If $f$ is also bijective, 
it is called a graded ring isomorphism;  we then write
$R\conggr T$.

For a graded ring $R$, a graded left $R$-module $M$ is
a left  $R$-module with a grading ${M=\bigoplus_{\ga \in \Ga'} 
M_{\ga}}$,
where the $M_{\ga}$ are all abelian groups and $\Ga'$ is a 
abelian group containing $\Ga$, such that $R_{\ga} \cdot
M_{\delta} \subseteq M_{\ga + \delta}$ for all $\ga \in \Ga_R, 
\delta \in \Ga'$. 
Then, $\Gamma_M$ and $M^h$ are
defined analogously to $\Gamma_R$ and~$R^h$.  We say that $M$ is 
a {\it graded free} $R$-module if it has a base as a free
$R$-module consisting of homogeneous elements.

A graded ring $E = \bigoplus_{ \ga \in \Gamma} E_{\ga}$ is
called a \emph{graded division ring} if $\Ga$ is a 
torsion-free abelian group and 
every non-zero homogeneous
element of $E$ has a multiplicative inverse. 
Note that the grade set  $\Ga_{E}$ is actually a group.  
Also, $E_{0}$ is a division ring,
and  $E_\gamma$ is a $1$-dimensional 
left and right $E_0$ vector space for every $\gamma\in \Gamma_E$.
The requirement that $\Gamma$ be torsion-free is made
because we are interested in graded division rings arising
from valuations on division rings, and all the grade groups
appearing there are torsion-free.  Recall that every 
torsion-free abelian group $\Gamma$ admits total orderings compatible 
with the group structure.  (For example, $\Gamma$ embeds in 
$\Gamma \otimes _{\mathbb Z}\mathbb Q$ which can be given 
a lexicographic total ordering using any base of it as a 
$\mathbb Q$-vector space.)  By using any total ordering on 
$\Gamma_E$, it is easy to see that $E$ has no zero divisors
and that $E^*$, the multiplicative group of units of $E$, 
coincides with  
 $E^{h} \mi \{0\}$ (cf. \cite{hwcor}, p.~78). 
Furthermore, the degree map 
\begin{equation}\label{degmap}
\deg\colon E^* \rightarrow \Gamma_E
\end{equation} is a group homomorphism with kernel $E_0^*$. 

%Note that for a graded
%division ring $D$, the center, $Z(D)$, is a graded field with
%grading $Z(D)= \bigoplus_{\ga \in \Ga_{D}} (Z(D) \cap D_{\ga})$.

By an easy adaptation of the ungraded arguments, one can see
that every graded  module~$M$  over a graded division ring 
$E$ is graded free, and every two  
homogenous bases have the same cardinality. 
We thus call $M$ a \emph{graded vector space} over $E$ and
write $\dim_E(M)$ for the rank of~$M$ as a graded free $E$-module.
Let $S \subseteq E$ be a graded subring which is also a graded
division ring.  Then, we can view $E$ as a graded left $S$-vector 
space, and  we write $[E:S]$ for $\dim_S(E)$.  It is easy to 
check the \lq\lq Fundamental Equality,"
\begin{equation}\label{fundeq}
[E:S] \ = \ [E_0:S_0] \, |\Gamma_E:\Gamma_S|,
\end{equation}
where $[E_0:S_0]$ is the dimension of $E_0$ as a left vector space
over the division ring $S_0$ and $|\Gamma_E:\Gamma_S|$ denotes the 
index in the group $\Gamma_E$ of its subgroup $\Gamma_S$.

A \emph{graded field} $T$ is a commutative graded division ring.
Such a $T$ is an integral domain, so it has a quotient field, 
which we denote $q(T)$.  It is known, see \cite{hwalg}, Cor.~1.3, 
that $T$ is integrally closed in $q(T)$.  An extensive theory of
graded algebraic extensions of graded fields has been developed in 
\cite{hwalg}.
For a graded field $T$, we can define a
grading on the polynomial ring $T[x]$ as follows:  
Let  $\Delta$ be a totally ordered abelian group with
$\Gamma_T \subseteq \Delta$, and fix
$\theta \in \Delta$.  We have 
\begin{equation}\label{homogenizable} 
T[x]  \,  = \,   
\textstyle\bigoplus\limits_{\ga \in 
%% \Ga_{T}[\theta]
\Delta} T[x]_{\ga}, \quad
\mathrm{where}\quad
T[x]_{\ga} \  = \  \{ \textstyle\sum a_{i} x^{i} \mid a_{i} \in T^{h}, \
\deg(a_{i}) +i \theta = \ga \}. 
\end{equation}
This makes $T[x]$ a graded ring, which we denote $T[x]^{\theta}$.
Note that $\Gamma_{T[x]^{\theta}} = \Gamma_T +
\langle \theta \rangle$.
 A homogeneous polynomial in $T[x]^{\theta}$
 is said to be
{\it $\theta$-homogenizable}.
If $E$ is a graded division 
algebra with center $T$, and
$a \in E^{h}$ is homogeneous of
degree $\theta$, then the evaluation homomorphism 
$\epsilon_a\colon  T[x]^{\theta} \ra T[a]$
given by  $f \mapsto f(a)$ is a graded ring 
homomorphism.  Assuming $[T[a]:T]<\infty$, we 
have $\ker(\epsilon_a)$ is a principal ideal of 
$T[x]$ whose unique monic generator $h_a$ is called the 
minimal polynomial of~$a$ over $T$. It is known, see \cite{hwalg}, 
Prop.~2.2, that
if $\deg(a) = \theta$, then $h_a$ is $\theta$-homogenizable. 

If $E$ is a graded division ring, then its center $Z(E)$ is clearly
a graded field.  {\it The graded division rings considered in 
this paper will always be assumed finite-dimensional over their 
centers.}  The finite-dimensionality assures that $E$ 
has a quotient division ring $q(E)$ obtained by central localization,
i.e., $q(E) = E \otimes_T q(T)$ where $T = Z(E)$. Clearly,
$Z(q(E)) = q(T)$ and $\ind(E) = \ind(q(E))$, where  
the index of $E$ is defined by $\ind(E)^2 = [E:T]$.  
If $S$ is a graded field which is a graded subring of $Z(E)$
and $[E:S] <\infty$, 
then $E$ is said to be a {\it graded division algebra} over~$S$.

A graded division algebra $E$ with center $T$ is 
said to be {\it unramified} if $\Gamma_E=\Gamma_T$. 
From~(\ref{fundeq}), it follows then that $[E:S]=[E_0:T_0]$. 
At the other extreme, $E$ is said to be {\it totally ramified} 
if $E_0=T_0$. In a case in the middle, $E$ is 
said to be {\it semiramified} if  
 $E_0$ is a field and ${[E_0:T_0]=|\Gamma_E:\Gamma_T|=\ind(E)}$. 
These definitions are motivated by  analogous  definitions for 
valued division algebras (\cite{wadval}). 
Indeed,  if a valued division algebra 
is unramified, semiramified, or totally ramfied, then so is its 
associated graded division 
algebra (see \S\ref{valued}).

A main theme of this paper is to study the correspondence 
between  $\SK$ of a valued division 
algebra and that of  its associated graded division algebra.  
We now recall how to associate a graded division algebra to a valued 
division algebra.

Let $D$ be a division algebra finite dimensional over its 
center $F$, with a valuation 
$v: D^{\ast} \ra
\Ga$. So $\Ga$ is a totally ordered abelian group,  
and $v$ satisifies the conditions that for all
$a, b \in D^{\ast}$,
\begin{enumerate}
\item $ v(ab) = v(a) + v(b)$;

\item $v(a+b) \geq \min \{v(a),v(b) \}\;\;\;\;\; (b \neq -a).$
\end{enumerate}
Let 
\begin{align*}
V_D  \ &= \  \{ a \in D^{\ast} : v(a) \geq 0 \}\cup\{0\}, 
\text{ the
valuation ring of $v$};\\ 
M_D  \ &= \  \{ a \in D^{\ast} : v(a) > 0
\}\cup\{0\}, \text{ the unique maximal left (and right) ideal
 of $V_D$}; \\
\overline{D}  \ &= \  V_D / M_D, \text{ the residue
division ring of $v$ on $D$; and} \\
\Ga_D  \ &= \  \mathrm{im}(v), \text{ the value
group of the valuation}. 
\end{align*}
For background on  valued division algebras, 
see \cite{jw} or the survey paper  \cite{wadval}.  
One associates to $D$ a graded division algebra 
as follows:
For each $\gamma\in \Gamma_D$, let
\begin{align*} 
 D^{\ge\ga}  \ &=  \ 
\{ d \in D^{\ast} : v(d) \geq \ga \}\cup\{0\}, \text{ an additive 
subgroup of $D$}; \qquad \qquad\qquad\qquad\qquad \ \\
D^{>\ga}  \ &=  \ \{ d \in D^{\ast} : v(d) > \ga \}\cup\{0\}, 
\text{ a subgroup 
of $D^{\ge\ga}$};   \text{ and}\\
 \gr(D)_\gamma \ &= \ 
D^{\ge\ga}\big/D^{>\ga}. 
%% So
%% $D^{>\ga}$ is a subgroup of the additive group $D^{\ge\ga}$
\end{align*}
Then define
$$
 \gr(D)  \ = \  \textstyle\bigoplus\limits_{\ga \in \Ga_D} 
\gr(D)_\gamma. \ \ 
%% \text{ where }  \ \ \gr(D)_\gamma \,\ = \, 
%% {D^{\ge\ga}}\big/{D^{>\ga}}. 
$$
Because $D^{>\ga}D^{\ge\de} \,+\, D^{\ge\ga}D^{>\de} 
\subseteq D^{>(\ga +
\de)}$ for all $\ga , \de \in \Ga_D$, the  multiplication on 
$\gr(D)$ induced by multiplication on $D$ is
well-defined, giving that $\gr(D)$ is a graded  ring, called the 
{\it associated graded ring} of $D$. The 
multiplicative property 
(1) of  the valuation $v$ implies that $\gr(D)$ is a graded 
division ring.
Clearly,
we have ${\gr(D)}_0 = \overline{D}$ and $\Ga_{\gr(D)} = \Ga_D$.
For $d\in D^*$, we write $\widetilde d$ for the image 
$d + D^{>v(d)}$ of $d$ in $\gr(D)_{v(d)}$.  Thus, 
the map given by $d\mapsto \widetilde d$ is 
a group epimorphism $ D^* \rightarrow {\gr(D)^*}$ with 
kernel~$1+M_D$.  

The restriction $v|_F$ of the valuation on $D$ to its center $F$, is
a valuation on $F$, which induces a corresponding graded field  $\gr(F)$.
Then it is clear that $\gr(D)$ is a graded $\gr(F)$-algebra, and 
by (\ref{fundeq})  and the Fundamental Inequality for 
valued division algebras, 
$$
[\gr(D):\gr(F)]  \ = \ 
[\overline{D}:\overline{F}] \, |\Ga_D :\Ga_F|
 \ \le  \ [D:F] \ < \infty.
$$

Let $F$ be a field with a henselian valuation $v$. Recall that a 
 field extension $L$ of $F$ of degree~$n<\infty$ is 
said to be {\it tamely ramified} or {\it tame} over $F$ 
if, with respect to the unique extension of $v$ to $L$,
the residue field 
$\overline L$ is a separable field extension of~ 
$\overline F$ and $\chr({\overline F})\nmid
n\big/[\overline L:\overline F]$. Such an $L$~is 
necessarily {\it defectless} over $F$, 
i.e., ${[L:F] = [\overline L:\overline F] \, |\Gamma_L:\Gamma_F|
=[\gr(L):\gr(F)]}$.
Along the same lines, let $D$ be a
division algebra with center $F$ (so, by convention,
$[D:F] < \infty$); then $v$ on $F$ extends uniquely to a valuation 
on $D$. With respect to this valuation, $D$ 
is said to be  
{\it tamely ramified} or {\it tame} if $Z(\overline D)$ is 
 separable over $\overline F$ and ${\chr({\overline F}) \nmid 
\ind(D)\big/\big(\ind(\overline D)[Z(\overline D):\overline F]\big)}$. 
It is known (cf.~Prop.~4.3 in \cite{hwcor}) that 
$D$ is tame if and only if $[\gr(D):\gr(F)] = [D:F]$ and 
$Z(\gr(D))=\gr(F)$, if and only if $D$ is split by the 
maximal tamely ramified extension of~$F$, if and only if 
$\chr(\ov F) = 0$
or $\chr(\ov F) = p\ne 0$ and the $p$-primary component of 
$D$ is split by the maximal unramified extension of $F$.
We say $D$ is \emph{strongly tame} 
if $\chr(\overline F)\nmid\ind(D)$.
Note that strong tameness implies tameness. 
This is clear from the last characterization of tameness, 
or from \eqref{Ostrowski} below.
For a detailed 
study of the associated graded algebra of a valued 
division algebra
refer to \S4 in \cite{hwcor}. Recall also from \cite{M},
Th.~3, that for a valued 
division algebra $D$ finite dimensional over its  center $F$ (here not necessarily 
henselian), we have the \lq\lq Ostrowski theorem"
\begin{equation}\label{Ostrowski}
[D:F]
\ = \ q^k\,[\overline D:\overline F] \,|\Gamma_D:\Gamma_F|
\end{equation} 
where $q=\chr({\overline D})$ and $k \in \mathbb Z$ with $k\geq 0$
(and  $q^k = 1$ if $\chr(\overline D) = 0$).  
If $q^k = 1$ in equation~   \eqref{Ostrowski}, then $D$ is 
said to be  {\it defectless} over $F$.

Let $E$ be a graded division algebra with, as we always assume,
 $\Gamma_E$ a 
torsion-free abelian group.
After fixing  some total ordering on $\Ga_E$,  define a function
$$ 
\la\colon  E\mi\{0\} \ra E^{\ast} \quad\textrm{ by } 
\quad\la(\textstyle\sum c_{\ga}) =
c_{\de},
$$ 
where $\de$ is minimal among the $ \ga \in \Ga_E$ with
$c_{\ga} \neq0$. Note that $\la(a) = a$ for $a \in E^{\ast}$, and
\begin{equation}\label{valhomin}
 \la(ab)= \la(a) \la(b) \textrm{ for all } a,b \in E\mi\{0\}.
\end{equation}

Let $Q = q(E)$.
We can extend $\la$ to a map defined on all of $Q^{\ast}$
 as follows: for $q
\in Q^{\ast}$, write $q = ac^{-1}$  with $a \in E\mi\{0\}$, $c \in
Z(E)\mi\{0\}$, and set $\la(q) = \la(a) \la(c)^{-1}$. It follows
 from (\ref{valhomin}) that $\la\colon Q^{\ast} \ra E^{\ast}$ is 
well-defined and is
a group homomorphism. Since the composition $E^{\ast}
\hookrightarrow Q^{\ast} \ra E^{\ast}$ is the identity, $\la$ is a
splitting map for the injection $E^{\ast} \hookrightarrow 
Q^{\ast}$. (In  Lemma~\ref{skeskqinj} below, we will observe that 
this 
map induces a monomorphism from $\SK(E)$ to $\SK(Q)$.)

Now, by composing $\la$ with the degree map of (\ref{degmap}) we get a 
map $v$,
\begin{equation}\label{lamfun}
\begin{split}
\xymatrix{
Q^* \ar[r]^{\la} \ar[dr]^v & E^*\ar[d]^{\deg} \\
& \Gamma_E}
\end{split}
\end{equation}
This  $v$ is in fact a valuation on $Q$: for $a,b \in Q^*$, 
$v(ab)=v(a)+v(b)$ as $v$ is the composition of two group 
homomorphisms, and it is straightforward to check that  
$v(a+b) \ge \min(v(a),v(b))$ (check this first for $a,b \in 
E  \mi \{0\}$).  It is easy to see that 
for the associated graded ring for this valuation 
on $q(E)$, we have $\gr(q(E))\conggr E$;
this is a strong indication of the close connection between
graded and valued structures.

\section{Reduced norm and reduced Whitehead 
group of a graded division algebra} \label{reducedsec}

Let $A$ be an Azumaya algebra of constant rank $n^2$
over a commutative ring $R$. 
Then there is a  commutative 
ring $S$ faithfully flat over $R$ which splits $A$, i.e., 
$A\otimes_R S \cong M_n(S)$. For $a \in A$, 
considering $a \otimes 1$ as an element of $M_n(S)$, 
one then defines the {\it reduced characteristic polynomial}, 
 the {\it reduced trace}, and the {\it reduced 
norm} of $a$ by
  $$
\chr_A(x,a) \ = \ \det(x-(a\otimes1)) \ = \ 
x^n-\Trd_A(a)x^{n-1}+\ldots+(-1)^n\Nrd_A(a).
$$
Using descent theory, one shows that $\chr_A(x,a)$ is independent of $S$ 
and of the choice of isomorphism $A\otimes_RS \cong M_n(S)$, and 
that $\chr_A(x,a)$ lies in $R[x]$;  
furthermore,  the element $a$ is invertible in $A$ if and only if 
$\Nrd_A(a)$ is invertible in $R$ (see Knus \cite{knus}, III.1.2, 
and Saltman \cite{saltman}, Th.~4.3). Let $A^{(1)}$ denote the set 
of elements of $A$ with the reduced 
norm $1$. One then  defines the {\it reduced Whitehead group} 
of $A$ to be $\SK(A)=A^{(1)}/A'$, where $A'$ denotes the commutator 
subgroup of the group $A^*$ of  invertible elements of $A$.  The {\it reduced norm residue group} of $A$ is defined to be 
$\SH(A)=R^*/\Nrd_A(A^*)$.  These groups are related by the exact 
sequence: 
$$
1\longrightarrow \SK(A) \longrightarrow A^*/A' 
\stackrel{\Nrd}{\longrightarrow} R^*  \longrightarrow \SH(A) 
\longrightarrow 1
$$

Now let $E$ be a graded division algebra with 
center $T$. Since $E$ is an Azumaya  algebra over~ 
$T$ (\cite{boulag}, Prop.~5.1 or\cite{hwcor}, Cor.~1.2), its 
reduced Whitehead group $\SK(E)$ is defined.   

\begin{rem}
The reduced norm for an Azumaya algebra 
is defined using a splitting ring, and in general
splitting rings can be difficult to find. But   
for a graded division algebra $E$ we observe that,
analogously to the  case of ungraded division rings,  
any maximal graded subfield $L$ of $E$ splits $E$. 
For, the centralizer $C = C_E(L)$ is a graded subring of
$E$ containing $L$, and for any homogeneous $c\in 
C$, $L[c]$ is a graded subfield of $E$ containing $L$.
Hence, $C = L$, showing that $L$ is a maximal commutative 
subring of $E$. Thus,  by Lemma~5.1.13(1), p.~141 of \cite{knus}, 
as  $E$ is Azumaya,  $E \otimes_T L \cong \End_L(E) \cong M_n(L)$.
Thus, we can compute reduced norms for elements of $E$ by 
passage to $E\otimes_T L$.
\end{rem}

 We have 
other tools as well for computing $\Nrd_E$
and~$\Trd_E$:

\begin{proposition}\label{normfacts}
Let $E$ be a graded division ring with center $T$.
Let $q(T)$ be the quotient field of 
$T$, and let ${q(E) = E\otimes _T q(T)}$, which is the 
quotient division ring of  $E$.
We view $E\subseteq q(E)$.
Let $n = \ind(E) = \ind(q(E))$.
Then for any $a\in E$,
\begin{enumerate}
\item [(i)] $\chr_E(x,a) = \chr_{q(E)}(x,a)$, 
so 
\begin{equation}\label{nrdquot}
\Nrd_E(a)  \, =  \, \Nrd_{q(E)}(a)
 \quad\text{ and }
\quad \Trd_E(a)  \, = \,  \Trd_{q(E)}(a).
\end{equation}
\item [(ii)] If $K$ is any graded subfield of $E$
containing $T$ and $a\in K$, then 
$$
\Nrd_E(a)  \,= \, N_{K/T}(a)^{n/[K:T]}
\quad \text{ and }\quad \Trd_E(a) \, =  \, \textstyle
\frac n{[K:T]}\, \Tr_{K/T}(a).
$$
\item [(iii)] For $\gamma\in \Gamma_E$, if 
$a\in E_\gamma$ then $\Nrd_E(a)\in E_{n\gamma}$ and 
$\Trd(a)\in E_\gamma$.  In particular, $E^{(1)} 
\subseteq E_0$.
\item [(iv)] Set $\dlambda = \ind(E)\big/\big(\ind(E_0)
[Z(E_0):T_0]$\big).  If $a\in E_0$, then, 
\begin{equation}\label{NrdD0}
\Nrd_E(a) \, = \, N_{Z(E_0)/T_0}\Nrd_{E_0}(a)^
 {\,\dlambda} 
\in T_0
\quad \text{ and }\quad
\Trd_E(a) \, = \, \dlambda \,
\Tr_{Z(E_0)/T_0}\Trd_{E_0}(a) \in T_0.
\end{equation}
\end{enumerate}  
\end{proposition}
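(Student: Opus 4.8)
The plan is to prove the four parts in order, using the quotient division algebra $q(E)$ as the main bridge and then reducing everything to classical facts about reduced norms over fields. For part (i), I would argue that the reduced characteristic polynomial is insensitive to central extension of scalars: a splitting field $M$ of $q(E)$ restricts to a splitting \emph{graded} field $L$ of $E$ (or, more simply, one uses that any splitting ring for $E$ also splits $q(E)=E\otimes_Tq(T)$ after tensoring with $q(T)$), and the matrix entries defining $\det(x-(a\otimes 1))$ are the same whether computed in $E\otimes_TM$ or $q(E)\otimes_{q(T)}M$. Hence $\chr_E(x,a)=\chr_{q(E)}(x,a)$ as polynomials, and comparing coefficients gives \eqref{nrdquot}.

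For part (ii), this is the standard tower formula for reduced norms. Using (i), I can work inside $q(E)$ with $q(K)\supseteq q(T)$, where $q(K)$ is a (commutative) subfield of $q(E)$ containing the center $q(T)$. The classical identity for central simple algebras gives $\Nrd_{q(E)}(a)=N_{q(K)/q(T)}(a)^{\,n/[q(K):q(T)]}$ and similarly for the trace; then I note $[q(K):q(T)]=[K:T]$ since central localization preserves dimension, and $N_{q(K)/q(T)}(a)=N_{K/T}(a)$ for $a\in K$ because the minimal polynomial of $a$ over $T$ (which is $\theta$-homogenizable by the results recalled in \S\ref{prel}) coincides with its minimal polynomial over $q(T)$. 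Restricting back along $E\subseteq q(E)$ gives the claimed formulas.

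Part (iii) is where the grading genuinely enters. For $a\in E_\gamma$, the graded subfield $T[a]\subseteq E$ is $\theta$-homogenizable with $\theta=\deg(a)$, so by \cite{hwalg}, Prop.~2.2 its minimal polynomial $h_a=x^m-c_{m-1}x^{m-1}+\dots\pm c_0$ over $T$ is $\theta$-homogenizable, meaning $\deg(c_i)+i\theta$ is constant $=m\theta$; thus $c_i\in T_{(m-i)\theta}$, in particular the constant term lies in $T_{m\theta}$ and the coefficient of $x^{m-1}$ lies in $T_\theta$. Feeding this into the power formula of part (ii) with $K=T[a]$, $m=[K:T]$, shows $\Nrd_E(a)=(\pm c_0)^{n/m}\in T_{n\gamma}$ and $\Trd_E(a)=\tfrac nm\,c_{m-1}\in T_\gamma\subseteq E_\gamma$; note $n/m\in\zz$ since $[K:T]\mid n$. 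The inclusion $E^{(1)}\subseteq E_0$ follows: if $a=\sum_\gamma a_\gamma\in E^{(1)}$ has a nonzero component of minimal degree $\delta$, then $\lambda(a)=a_\delta$, and applying the multiplicative map $\lambda$ (which is the identity on homogeneous elements and commutes with $\Nrd$ via passing through $q(E)$ and the valuation $v$) forces $\Nrd_E(a_\delta)=\lambda(\Nrd_E(a))=\lambda(1)=1$; but $\Nrd_E(a_\delta)\in E_{n\delta}$, so $n\delta=0$, hence $\delta=0$ as $\Gamma_E$ is torsion-free, so $a\in E^{\ge 0}\cap E^{\le 0}=E_0$.

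For part (iv), take $a\in E_0$. Then $E_0$ is a division ring with center $Z(E_0)$, and $Z(E_0)$ is a field finite over $T_0$. I would use the tower $T_0\subseteq Z(E_0)\subseteq$ (a maximal subfield of $E_0$) inside a maximal graded subfield $L$ of $E$ that splits $E$, and match reduced-norm computations degree by degree in the homogeneous component $E_0$. Concretely: choosing $L\supseteq Z(E_0)$ with $L_0$ a maximal subfield of $E_0$ and $[L:T]=n$, the reduced norm $\Nrd_E(a)$ computed via $E\otimes_TL\cong M_n(L)$ reduces, by block-diagonal considerations coming from the grading, to $[L:L_0]=\delta[Z(E_0):T_0]^{?}$ copies — more precisely, one gets $\Nrd_E(a)=N_{Z(E_0)/T_0}\!\big(\Nrd_{E_0}(a)\big)^{\dlambda}$ where $\dlambda=\ind(E)/(\ind(E_0)[Z(E_0):T_0])$ counts the multiplicity; the additive statement for $\Trd_E$ follows in the same way with products replaced by sums. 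The cleanest route is probably to invoke part (ii) with $K=Z(E_0)[b]$ for a generator $b$ of a maximal subfield of $E_0$ over $Z(E_0)$ together with the classical formula $\Nrd_{E_0}(a)=N_{Z(E_0)[b]/Z(E_0)}(a)^{\ind(E_0)/[\,\cdot\,]}$, and then compose norms down the tower $T_0\subseteq Z(E_0)\subseteq Z(E_0)[b]$, checking the exponents multiply to $\dlambda\cdot\ind(E_0)\cdot[Z(E_0)[b]:Z(E_0)]=n$.

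I expect the main obstacle to be part (iv): one must correctly identify the multiplicity $\dlambda$ with which $\Nrd_{E_0}$ (itself only an $\ind(E_0)$-th root of a norm form) and the norm $N_{Z(E_0)/T_0}$ enter $\Nrd_E$, and this requires a careful choice of splitting graded subfield $L$ together with a block-decomposition of $E\otimes_TL$ respecting the grading — in effect showing that $E\otimes_{T_0}Z(E_0)$ (or the relevant graded object) splits as a product of copies of $E_0$-like pieces indexed by $\Gal$ or by the cosets in $\Gamma_E/\Gamma_T$, so that the reduced norm factors accordingly. Parts (i)–(iii) are comparatively routine given the $\theta$-homogenizability of minimal polynomials recalled in \S\ref{prel}.
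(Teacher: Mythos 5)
Your handling of (i), (ii), and the first assertion of (iii) is sound and is essentially the paper's argument (compare $h_a$, the characteristic polynomial of multiplication by $a$ on $K$, and $\chr_{q(E)}(x,a)=h_a^{n/m}$, then use $\gamma$-homogenizability of $h_a$ to locate the coefficients). But two of your steps have genuine problems. For $E^{(1)}\subseteq E_0$ you invoke the identity $\Nrd_E(\lambda(a))=\lambda\big(\Nrd_{q(E)}(a)\big)$. That is precisely the compatibility of reduced norms with passage to the associated graded ring (Cor.~\ref{nrdcor} applied to $q(E)$ via its henselization), which is only established in \S\ref{valued} using $\lambda$-polynomials and Lemma~\ref{grnorm} --- and whose proof itself relies on Prop.~\ref{normfacts}(ii). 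You cannot assume it here. Even granting it, your argument only shows that the \emph{minimal}-degree component of $a$ has degree $0$, not that $a\in E_0$; you would have to rerun it with the reversed ordering to control the top component. The intended argument is much shorter: $\Nrd_E(a)=1$ is a unit of $T$, so $a$ is a unit of $E$ by the invertibility criterion for Azumaya algebras quoted in \S\ref{reducedsec}, and every unit of a graded division ring is homogeneous; then $1=\Nrd_E(a)\in E_{n\deg(a)}$ forces $\deg(a)=0$.

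In (iv), the field $K=Z(E_0)[b]$ you propose to feed into (ii) lies inside $E_0$ and therefore does not contain $T$, so (ii) does not apply to it as stated; you would need $K=T\cdot L$ with $L$ a maximal subfield of $E_0$ \emph{containing $a$} --- a condition you omit, and which is needed both to compute $\Nrd_E(a)$ and to get $\Nrd_{E_0}(a)=N_{L/Z(E_0)}(a)$. Your exponent check is also off: $[Z(E_0)[b]:Z(E_0)]=\ind(E_0)$, so the product you write equals $\dlambda\,\ind(E_0)^2$, whereas $n=\dlambda\,\ind(E_0)\,[Z(E_0):T_0]$; the correct bookkeeping gives $n/[K:T]=\dlambda$. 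The paper sidesteps all of this, and no block decomposition of $E\otimes_TL$ is needed: since $a\in E_0$, its minimal polynomial $h_a$ is $0$-homogenizable, i.e., $h_a\in T_0[x]$, hence is also the minimal polynomial of $a$ over $T_0$; one then compares $\Nrd_E(a)=[(-1)^mt_0]^{n/m}$ directly with $N_{L/T_0}(a)=[(-1)^mt_0]^{[L:T_0]/m}$ and uses $n=\dlambda\,[L:T_0]$ together with transitivity of the field norm.
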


\begin{proof}
(i) The construction of reduced characteristic polynonials 
described above is clearly compatible with scalar extension
of the ground ring.   
Hence, $\chr_E(x,a) = \chr_{q(E)}(x,a)$ (as we are identifying
$a\in E$ with 
$a\otimes1$ in $E\otimes _T q(T)$\,).  
The formulas in \eqref{nrdquot} follow immediately.

(ii) Let $h_a = x^m  +t_{m-1}x^{m-1} + \ldots+ t_0 \in q(T)[x]$ 
be the minimal 
polynomial of $a$ over~$q(T)$.  As noted in \cite{hwalg}, Prop.~2.2, 
since the integral domain $T$ is integrally closed and $E$~is 
integral over~$T$, we have $h_a\in T[x]$.  Let 
$\ell_a = x^k + s_{k-1}x^{k-1} + \ldots +s_0\in T[x]$ be the 
characteristic polynomial of the $T$-linear function on the 
free $T$-module $K$ given by  $c\mapsto ac$.  By definition,
$N_{K/T}(a) = (-1)^ks_0$ and $\Tr_{K/T}(a) = -s_{k-1}$.
Since
$q(K) = K\otimes_T q(T)$, we have $[q(K):q(T)] = [K:T] = k$
and $\ell_a$ is also the characteristic polynomial for the 
$q(T)$-linear transformation of $q(K)$ given by $q \mapsto aq$.
So, $\ell_a = h_a^{k/m}$.  Since 
$\chr_{q(E)}(x,a) = h_a^{n/m}$ (see \cite{rei}, Ex.~1, p.~124), 
we have 
$\chr_{q(E)}(x,a) = \ell_a^{n/k}$. Therefore, using (i),
$$
\Nrd_E(a)  \ =  \ \Nrd_{q(E)}(a)  \ = \ \big[(-1)^ks_0\big]^{n/k}
  \ = \ N_{K/T}(a)^{n/k}.
$$
The formula for $\Trd_E(a)$ in (ii) follows analogously.

(iii) From the equalities
$\chr_E(x,a) = \chr_{q(E)}(x,a) = h_a^{n/m}$ noted in 
proving (i) and (ii), we have $\Nrd_E(a) = [(-1)^mt_0]^{n/m}$
and $\Trd_E(a) = -\frac nm t_{m-1}$.
As noted in \cite{hwalg}, Prop.~2.2, if $a\in E_\gamma$, 
then its minimal polynomial $h_a$ is $\gamma$-homogenizable
in $T[x]$ as in \eqref{homogenizable} above.  Hence,
$t_0\in E_{m\gamma}$ and $t_{m-1}\in E_\gamma$.  Therefore,
$\Nrd_E(a) \in E_{n\gamma}$ and $\Trd(a) \in E_{\gamma}$.  
If $a \in E^{(1)}$ then $a$ is homogeneous, since it is a 
unit of $E$, and since $1 = \Nrd_E(a) \in E_{n\deg(a)}$,
necessarily $\deg(a) = 0$.

(iv) Suppose $a\in E_0$.  Then, $h_a$ is $0$-homogenizable 
in $T[x]$, i.e., $h_a\in T_0[x]$.  Hence, 
$h_a$  is the minimal polynomial of 
$a$ over the field $T_0$.  Therefore, if $L$ is any maximal subfield 
of $E_0$ containing $a$, we have 
$N_{L/T_0}(a) = [(-1)^mt_0]^{[L:T_0]/m}$. Now, 
 $$
n/m  \ = \  \delta \ind(E_0)[Z(E_0):T_0]\big/m \ = 
\  \delta\,[L:T_0]/m.
$$ 
Hence,
\begin{align*}
\Nrd_E(a) \ &= \ \big[(-1)^m t_0\big]^{n/m} \ = \ 
\big[(-1)^m t_0\big]^{\delta[L:T_0]/m} \ = \ N_{L/T_0}(a)^\de
 \\
&= \ N_{Z(E_0)/T_0}N_{L/T_0}(a)^\de \ = \ 
N_{Z(E_0)/T_0}\Nrd_{E_0}(a)^\de.
\end{align*}
The formula for $\Trd_E(a)$ is proved analogously.
\end{proof}

In the rest of this section we study the reduced Whitehead group 
$\SK$ of a graded division algebra. As we mentioned 
in the introduction, the motif is to show that working 
in the graded setting is much easier than in the 
non-graded setting.

The most successful approach to computing $\SK$ for 
 division algebras over henselian fields is due to Ershov 
in \cite{ershov}, where three linked exact 
sequences were constructed involving a division algebra $D$, its 
residue division algebra $\ov D$,  and its group of
 units $U_D$ (see also \cite{wadval},
 p.~425).  From these exact sequences, Ershov recovered  Platonov's 
 examples \cite{platonov} of division algebras with nontrivial 
$\SK$ and many more examples as well.
In this section we will easily  prove the graded version of
 Ershov's exact sequences (see diagram~\eqref{ershovsk1}),
 yielding formulas for $\SK$ of 
unramified,  semiramified, and totally ramified 
graded  division algebras. This will be applied in 
\S\ref{valued}, where it will be  shown that $\SK$ of 
a tame  division algebra over a henselian field 
coincides with $\SK$ of 
its associated graded division algebra. We can then
readily deduce from the graded results
%% deduce from diagram~\ref{ershovsk1} 
many   established 
formulas in the literature for the reduced Whitehead 
groups of  valued division algebras   
(see Cor. ~\ref{sk1appl}).  This demonstrates the merit of the 
graded approach.

If $N$ is a group, we denote by $N^n$ the subgroup of $N$
generated by all $n$-th powers of elements of $N$. A
homogeneous multiplicative commutator of $E$ where $E$ is
a graded division ring, is an element of the form $ab a^{-1}
b^{-1}$ where $a,b \in E^{*} = E^h \mi \{0\}$.
 We will  use the notation $[a,b]
= ab a^{-1} b^{-1}$ for $a,b \in E^{h}$. 
Since $a$ and $b$ are homogeneous, note that $[a,b] \in E_0$.
If $H$ and $K$ are
subsets of $E^{\ast}$, then $[H, K]$ denotes the subgroup of 
$E^{\ast}$
generated by ${\{[h,k] :h \in H, k \in
K\}}$. The group $[E^{\ast}, E^{\ast}]$ will be denoted by $E'$.

\begin{proposition} \label{normalthm} 
Let $E = \bigoplus_{\al \in \Ga} E_{\al}$ be a graded division
algebra with graded center $T$, with $\ind(E) = n$. Then,
\begin{enumerate}
\item [(i)] If $N$ is a normal subgroup of 
$E^{\ast}$, then $N^{n} \subseteq
\Nrd_E(N)[E^{\ast}, N]$.
\item [(ii)] $\SK(E)$ is $n$-torsion.
\end{enumerate}
\end{proposition}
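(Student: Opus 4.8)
The plan is to deduce both parts from a Wedderburn-factorization argument applied to the reduced characteristic polynomial of a suitable homogeneous element, mimicking the classical proof for ungraded division algebras (as in, e.g., Draxl's book), but using the graded Wedderburn factorization theorem (Th.~\ref{domainW}) promised in the appendix. First I would observe, using Proposition~\ref{normfacts}(iii), that for $a \in E^*$ the reduced norm $\Nrd_E(a)$ lies in $T$, and more precisely, writing $m$ for the degree of the minimal polynomial $h_a$ of $a$ over $T$, we have $\chr_E(x,a) = h_a^{\,n/m}$ and $\Nrd_E(a) = \big[(-1)^m t_0\big]^{n/m}$ where $t_0$ is the constant term of $h_a$. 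The key point is that the minimal polynomial $h_a \in T[x]$ of a homogeneous element, being $\theta$-homogenizable with $\theta = \deg(a)$, splits completely into linear factors over $E$ by the graded Wedderburn factorization theorem: $h_a = (x - b_m a b_m^{-1})\cdots(x - b_1 a b_1^{-1})$ for suitable $b_i \in E^*$ (all homogeneous of degree $0$, in fact, since conjugation preserves degree). Taking constant terms gives $(-1)^m t_0 = (b_m a b_m^{-1})\cdots(b_1 a b_1^{-1})$, an explicit product of conjugates of $a$.

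Now for (i): let $N \trianglelefteq E^*$ and take $a \in N$. Then $a$ is homogeneous (units of $E$ are homogeneous), and
\[
\Nrd_E(a) \;=\; \big[(b_m a b_m^{-1})\cdots(b_1 a b_1^{-1})\big]^{n/m}.
\]
Working modulo the subgroup $[E^*, N]$ of $E^*$: since $N$ is normal, each $b_i a b_i^{-1} \in N$, and $b_i a b_i^{-1} \equiv a \pmod{[E^*,N]}$ because $b_i a b_i^{-1} a^{-1} = [b_i, a] \in [E^*, N]$. Hence the product $(b_m a b_m^{-1})\cdots(b_1 a b_1^{-1})$ is congruent to $a^m$ modulo $[E^*, N]$, so $\Nrd_E(a) \equiv a^n \pmod{[E^*,N]}$, i.e. $a^n \in \Nrd_E(a)\,[E^*, N] \subseteq \Nrd_E(N)\,[E^*,N]$. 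Since $N^n$ is generated by such $a^n$ and the right-hand side is a subgroup, we get $N^n \subseteq \Nrd_E(N)\,[E^*, N]$.

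For (ii): apply (i) with $N = E^{(1)}$, the group of reduced-norm-one elements. Then $\Nrd_E(N) = \{1\}$ and $[E^*, N] \subseteq [E^*, E^*] = E'$, so $\big(E^{(1)}\big)^n \subseteq E'$. Therefore every element of $\SK(E) = E^{(1)}/E'$ is killed by $n$, i.e. $\SK(E)$ is $n$-torsion.

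The main obstacle is ensuring the graded Wedderburn factorization is available and applies in exactly this form: one needs that the minimal polynomial of a \emph{homogeneous} element of the graded division ring $E$ splits into linear factors over $E$, with the roots being conjugates of $a$. This is precisely Th.~\ref{domainW} (proved in Appendix~\ref{weddapp}), which the excerpt permits me to assume; the only care needed is the bookkeeping that the conjugating elements $b_i$ can be taken in $E^*$ (so that normality of $N$ gives $b_i a b_i^{-1} \in N$) and that the passage from $\chr_E$ to $h_a$ via the $n/m$-th power, together with $[K:T]$-type multiplicities, introduces no obstruction — both of which follow cleanly from Proposition~\ref{normfacts}. A minor point is that everything takes place inside $E^*$ rather than $E$, which is legitimate since all elements involved ($a$, the $b_i$, and $(-1)^m t_0 = \Nrd_E(a)^{m/n}$ up to the power) are homogeneous units.
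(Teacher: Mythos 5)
Your proof is correct and follows essentially the same route as the paper's: apply the graded Wedderburn factorization theorem to the minimal polynomial $h_a$, write $\Nrd_E(a)=\bigl(\prod d_i a d_i^{-1}\bigr)^{n/m}\equiv a^n \pmod{[E^*,N]}$ using normality of $N$, and then take $N=E^{(1)}$ for part (ii). (Only your parenthetical claim that the conjugating elements can be taken of degree $0$ is unnecessary and not generally true --- they are merely homogeneous units --- but nothing in the argument depends on it.)
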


\begin{proof}
Let $a \in N$ and let $h_a \in q(T)[x]$ be the minimal polynomial of
$a$ over $q(T)$, and let $m = \deg(h_a)$.
  As noted in the proof of Prop.~\ref{normfacts}, 
$h_a \in T[x]$ and $\Nrd_E(a) =[(-1)^mh_a(0)]^{n/m}$.
 By the graded
Wedderburn Factorization Theorem~\ref{domainW}, we have
$h_a = (x - d_1 a d_{1}^{-1}) \ldots (x - d_{m} a d_{m}^{-1})$
where each $d_{i} \in E^{*}\subseteq E^h$. Note that 
$[E^*,N]$ is a normal subgroup of $E^*$, since $N$ is normal 
in $E^*$.
It follows that
\begin{align*} \Nrd_{E}(a)  \ &= \  
\big(d_{1} a d_{1}^{-1} \ldots d_{m} a d_{m} ^{-1}\big)^{n/m}  \ = \ 
 \big([d_1 , a] a [d_2 , a] a \ldots a [d_m , a] a\big)^{n/m} \\
&= a^n d_a \;\;\;\; \mathrm{where} \;\; d_a \in [E^{\ast} , N]. 
\end{align*}
Therefore, $a^n = \Nrd_{E}(a) d_a^{-1}\in \Nrd_E(N)[E^*,N]$,
yielding (i). (ii)  is immediate  from (i) by taking $N = E^{(1)}$.
\end{proof}

The fact that $\SK(E)$ is $n$-torsion is also deducible from
the injectivity of the map $\SK(E) \to \SK(q(E))$ shown in 
Lemma~\ref{skeskqinj} below.

 %%% For example, Let $D$ be a division algebra with center $F$ and $\sigma$ an automorphism of $D$ whose restriction to $F$ is finite and $K=F^{\sigma}$. Let $T=D[x,\sigma]$ be the twisted polynomial ring and $D(x,\sigma)$ its quotient division algebra. 
%%Considering $T[x^{-1}]=D[x,x^{-1},\sigma]$ which is a graded 
%%division ring with the quotient $D(x,\sigma)$, for $d \in D$, 
%%Equation (\ref{NrdD0}) immediately gives 
%%$$\Nrd_{D(x,\sigma)}(d)=N_{F/K}\Nrd_D(d).$$ 
%%This will be used in Section~\ref{skqd}.

We  recall the definition of the group 
$\widehat H^{-1}(G,A)$, which will appear in 
our description of $\SK(E)$. For any finite
group $G$ and any $G$-module $A$,
 define the norm map 
$N_G:A\rightarrow A$ as follows: for any $a \in A$, let
$N_G(a)=\sum_{g \in G}ga$. Consider the $G$-module 
$I_G(A)$ generated as an abelian group by 
$\{a-ga: a \in A \text{ and }g \in G\}$. 
Clearly, $I_G(A) \subseteq \ker (N_G)$. Then, 
\begin{equation}\label{Hminus1}
\widehat H^{-1}(G,A) \ = \ \ker (N_G)\big/I_G(A).
\end{equation}

\begin{theorem} \label{bigdiag}
Let $E$ be any graded division ring finite 
dimensional over its center $T$. Let $\dlambda =
 \ind(E)\big/\big(\ind(E_0)\,
[Z(E_0):T_0]\big)$, and let $\mu_\dlambda(T_0)$ be 
the group of those
$\dlambda$-th roots of unity lying in $T_0$.
Let $G = \Gal(Z(E_0)/T_0)$ and let
$\widetilde N = N_{Z(E_0)/T_0}\circ \Nrd_{E_0}\colon
E_0^* \to T_0^*$.
Then, the rows and column of the following 
diagram are exact:
\begin{equation}\label{ershovsk1}
\begin{split}
\xymatrix{
 & & 1 \ar[d] & \\
 & \SK(E_0) \ar[r] & \ker \widetilde N/[E_0^*,E^*] 
\ar[r]^-{\Nrd_{E_0}} \ar[d] & \widehat H^{-1}(G,\Nrd_{E_0}(E_0^*)) 
\ar[r] &  1 \\
% 1\ar[r] 
& 
\Gamma_E\big/\Gamma_T \wedge \Gamma_E\big/\Gamma_T %[E^*,E^*]/[E_0^*,E^*] 
\ar[r] & E^{(1)}/[E_0^*,E^*] 
\ar[r]  \ar[d]^{\widetilde N}& \SK(E) \ar[r] & 1\\
 & & \mu_\dlambda(T_0) \cap  \widetilde N (E_0^*) \ar[d]&\\
 & & 1 &}
 \end{split}
 \end{equation}
\end{theorem}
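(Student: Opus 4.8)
The plan is to build the diagram \eqref{ershovsk1} by first identifying the key homomorphism $\widetilde N = N_{Z(E_0)/T_0}\circ \Nrd_{E_0}$ with (a power of) the reduced norm $\Nrd_E$ restricted to $E_0^*$, using Proposition~\ref{normfacts}(iv); concretely, $\Nrd_E|_{E_0^*} = \widetilde N^{\,\dlambda}$. The central column of the diagram expresses $E^{(1)}/[E_0^*,E^*]$ as an extension: by Proposition~\ref{normfacts}(iii) we have $E^{(1)} \subseteq E_0$, and since $[E_0^*, E^*] \subseteq E_0$, the group $E^{(1)}/[E_0^*,E^*]$ is a quotient of $E_0^*/[E_0^*,E^*]$. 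The map $\widetilde N$ descends to this quotient because $\widetilde N$ kills commutators $[E_0^*,E^*]$ — indeed $\widetilde N$ is a homomorphism into the abelian group $T_0^*$, and one checks it is invariant under conjugation by homogeneous elements of $E^*$ (conjugation by $d \in E^*$ induces an automorphism of $E_0$ over $T_0$ permuting the factors, so it fixes $N_{Z(E_0)/T_0}\Nrd_{E_0}$). The image of $\widetilde N$ on $E^{(1)}$ lands in $\mu_\dlambda(T_0)$: if $a \in E^{(1)}$ then $1 = \Nrd_E(a) = \widetilde N(a)^\dlambda$, so $\widetilde N(a)$ is a $\dlambda$-th root of unity in $T_0$, and conversely surjectivity onto $\mu_\dlambda(T_0) \cap \widetilde N(E_0^*)$ follows since any such root of unity $\zeta = \widetilde N(b)$ satisfies $\Nrd_E(b) = \zeta^\dlambda = 1$, so $b \in E^{(1)}$. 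The kernel of $\widetilde N$ on $E^{(1)}/[E_0^*,E^*]$ is $\ker\widetilde N/[E_0^*,E^*]$, giving the exact column
$$
\ker\widetilde N/[E_0^*,E^*] \;\longrightarrow\; E^{(1)}/[E_0^*,E^*] \;\xrightarrow{\;\widetilde N\;}\; \mu_\dlambda(T_0)\cap\widetilde N(E_0^*) \;\longrightarrow\; 1.
$$

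Next I would establish the bottom row of the middle block,
$$
\Gamma_E/\Gamma_T \wedge \Gamma_E/\Gamma_T \;\longrightarrow\; E^{(1)}/[E_0^*,E^*] \;\longrightarrow\; \SK(E) \;\longrightarrow\; 1.
$$
Surjectivity of $E^{(1)}/[E_0^*,E^*] \to \SK(E) = E^{(1)}/E'$ is clear since $[E_0^*,E^*] \subseteq E'$. The kernel is $(E' \cap E^{(1)})/[E_0^*,E^*] = E'/[E_0^*,E^*]$ (as $E' \subseteq E^{(1)}$ since reduced norms of commutators are trivial). So I need to identify $E'/[E_0^*,E^*]$ with (a quotient of) $\Gamma_E/\Gamma_T \wedge \Gamma_E/\Gamma_T$. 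The point is that $E^* = E^h\setminus\{0\}$ and the degree map $E^* \to \Gamma_E$ has kernel $E_0^*$; for homogeneous $a, b$ the commutator $[a,b]$ depends, modulo $[E_0^*, E^*]$, only on $\deg(a)$ and $\deg(b)$ modulo $\Gamma_T$ (since $T^* \subseteq Z(E^*)$ and altering $a$ by an element of $E_0^*$ changes $[a,b]$ by an element of $[E_0^*, E^*]$). This gives a well-defined alternating bi-additive map $\Gamma_E/\Gamma_T \times \Gamma_E/\Gamma_T \to E'/[E_0^*, E^*]$, hence a surjection from the exterior square $\Gamma_E/\Gamma_T \wedge \Gamma_E/\Gamma_T$ onto $E'/[E_0^*, E^*]$; exactness of the row is then precisely the statement that the kernel of $E^{(1)}/[E_0^*,E^*] \to \SK(E)$ is the image of this map. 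I would mimic Ershov's argument (or the treatment in \cite{wadval}, \S6) for this, using the graded Wedderburn Factorization Theorem~\ref{domainW} to control how commutators multiply.

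Finally, the top row
$$
\SK(E_0) \;\longrightarrow\; \ker\widetilde N/[E_0^*,E^*] \;\xrightarrow{\;\Nrd_{E_0}\;}\; \widehat H^{-1}(G,\Nrd_{E_0}(E_0^*)) \;\longrightarrow\; 1
$$
is essentially a statement purely about the ungraded division algebra $E_0$ with center $Z(E_0)$, together with the Galois action of $G = \Gal(Z(E_0)/T_0)$: here $\ker\widetilde N = \{a \in E_0^* : N_{Z(E_0)/T_0}(\Nrd_{E_0}(a)) = 1\}$, so applying $\Nrd_{E_0}$ sends it onto $\ker(N_G : \Nrd_{E_0}(E_0^*) \to \Nrd_{E_0}(E_0^*) \cap T_0^*)$, and modding out further by $I_G(\Nrd_{E_0}(E_0^*))$ — which is exactly the image under $\Nrd_{E_0}$ of the subgroup of $E_0^*$ generated by $a^{-1}\,{}^{g}\!a$ — yields $\widehat H^{-1}(G,\Nrd_{E_0}(E_0^*))$ as defined in \eqref{Hminus1}. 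The subtlety is that conjugation in $E^*$ by homogeneous elements of nonzero degree realizes, on $E_0$, automorphisms inducing all of $G$ on $Z(E_0)$ (this requires that $Z(E_0)/T_0$ is Galois and that $E^*/E_0^* \cong \Gamma_E/\Gamma_T$ surjects appropriately — standard for graded division algebras), so that $[E_0^*, E^*] \cap E_0^*$ contains the elements needed to convert the $I_G$-relation into commutators; this identifies $\ker(\Nrd_{E_0})$ on $\ker\widetilde N/[E_0^*,E^*]$ with the image of $E_0^{(1)}/E_0' = \SK(E_0)$, giving exactness at the left.

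I expect the main obstacle to be the precise bookkeeping in the middle row — pinning down that the kernel of $E^{(1)}/[E_0^*,E^*]\to\SK(E)$ is exactly the image of $\Gamma_E/\Gamma_T\wedge\Gamma_E/\Gamma_T$, i.e. that $E'$ is generated modulo $[E_0^*,E^*]$ by homogeneous commutators whose classes are alternating and bi-additive in the degrees. This is where one must use Theorem~\ref{domainW} carefully and mirror Ershov's original (rather delicate) computation, rather than any soft diagram-chase.
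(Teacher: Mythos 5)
Your treatment of the column and of the top row is essentially the paper's proof: the column comes from Prop.~\ref{normfacts}(iv) exactly as you say, and the top row from the facts that $\Nrd_{E_0}$ carries $\ker\widetilde N$ onto $\ker(N_G)$ and carries $[E_0^*,E^*]$ onto $I_G(\Nrd_{E_0}(E_0^*))$, the latter using that conjugation by homogeneous elements induces an epimorphism $E^*\to G$ (Prop.~2.3 of \cite{hwcor}). The one place where you hesitate --- establishing that $\eta\wedge\nu\mapsto [a,b]$ is well defined and bi-additive modulo $[E_0^*,E^*]$, where you propose to ``mirror Ershov's rather delicate computation'' and invoke the graded Wedderburn factorization theorem --- is in fact handled by a soft, purely group-theoretic argument (Lemma~\ref{wedge} in the paper): since $[E^*,E^*]\subseteq E_0^*$ and $E^*/(E_0^*T^*)\cong\Gamma_E/\Gamma_T$, the commutator identity $[a,bc]=[a,b]\,[b,[a,c]]\,[a,c]$ has its middle term in $[E_0^*,E^*]$, which gives bi-multiplicativity on the nose and hence the desired epimorphism $\Gamma_E/\Gamma_T\wedge\Gamma_E/\Gamma_T\to [E^*,E^*]/[E_0^*,E^*]$; no reduced-norm or factorization input is needed (Theorem~\ref{domainW} enters only in Prop.~\ref{normalthm}). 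With that substitution your argument is complete and coincides with the paper's.
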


\begin{proof} 
By Prop.~2.3 in \cite{hwcor},  
$Z(E_0)/T_0$ is a Galois extension and the map 
$\theta\colon E^* \rightarrow \Aut(E_0)$, 
given by $e \mapsto (a\mapsto eae^{-1})$ 
for $a \in E_0$, induces an epimorphism 
$E^* \rightarrow G =\Gal(Z(E_0)/T_0)$.  
In the notation for \eqref{Hminus1} with $A
=\Nrd_{E_0}(E_0^*)$, we have $N_G$ 
coincides with $N_{Z(E_0)/T_0}$ on $A$.  
Hence, 
\begin{equation}\label{kerNG}
\ker(N_G) \ = \ \Nrd_{E_0}(\ker(\widetilde N)).
\end{equation}
Take any $e\in E^*$ and let $\sigma = \theta(e)
\in \Aut(E_0)$. 
For any $a\in E_0^*$, let $h_a \in Z(T_0)[x]$ be the minimal
polynomial of $a$ over $Z(T_0)$. Then $\sigma(h_a)\in Z(T_0)[x]$
is the minimal polynomial of $\sigma(a)$ over $Z(T_0)$.
Hence, $\Nrd_{E_0}(\sigma(a)) =\sigma(\Nrd_{E_0}(a))$. 
Since $\sigma|_{Z(T_0)} \in G$, this yields 
\begin{equation}\label{Nrdbracket}
\Nrd_{E_0}([a,e]) \ =  \ \Nrd_{E_0}(a\sigma(a^{-1})) \ = \ 
\Nrd_{E_0}(a) \sigma(\Nrd_{E_0}(a))^{-1} \ \in \ I_G(A),
\end{equation} 
hence $\widetilde N([a,e]) = 1$.  Thus, we have 
$[E_0^*, E^*]\subseteq \ker(\widetilde N) \subseteq E^{(1)}$
with the latter inclusion from Prop.~\ref{normfacts}(iv).  
The formula in Prop.~\ref{normfacts}(iv) also shows that 
$\widetilde N(E^{(1)}) \subseteq \mu_\dlambda(T_0)$.  Thus, 
the vertical maps in diagram~\eqref{ershovsk1} are well-defined, and 
the column in \eqref{ershovsk1} is exact.  Because 
$\Nrd_{E_0}$ maps $\ker(\widetilde N)$ onto $\ker(N_G)$ by 
\eqref{kerNG} and it maps $[E_0^*,E^*]$ onto $I_G(A)$ by 
\eqref{Nrdbracket} (as $\theta(E^*)$ maps onto $G$), the map
labelled $\Nrd_{E_0}$ in diagram~\eqref{ershovsk1} is surjective with 
kernel $E_0^{(1)}\,[E_0^*,E^*]\big/[E_0^*, E^*]$.  Therefore, the
top row of \eqref{ershovsk1} is exact.  For the lower row, 
since $[E^*,E^*] \subseteq E_0^*$ and $E^*\big/(E_0^*\, Z(E^*))
\cong \Gamma_E /\Gamma_T$, the following lemma yields an epimorphism
$\Gamma_E /\Gamma_T \wedge \Gamma_E /\Gamma_T \to 
[E^*,E^*]/[E_0^*,E^*]$. Given this, the
lower row in \eqref{ershovsk1} is evidently exact.
\end{proof}

\begin{lemma}\label{wedge}
Let $G$ be a group, and let $H$ be a subgroup of $G$ with $H\supseteq
[G,G]$.  Let ${B = G\big/(H\, Z(G))}$.  Then, there is an epimorphism 
$B\wedge B \to [G,G]\big/[H,G]$.
\end{lemma}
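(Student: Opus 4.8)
The plan is to construct the epimorphism $B\wedge B\to [G,G]/[H,G]$ explicitly via the commutator map, after first checking that this map is well-defined on the exterior square.

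First I would note that since $H\supseteq [G,G]$, the quotient $B = G/(HZ(G))$ is abelian, and $[H,G]\trianglelefteq G$ (indeed $[H,G]\trianglelefteq [G,G]$ since $H$ is normal in $G$, being a subgroup containing the commutator subgroup). Consider the commutator pairing $G\times G\to [G,G]/[H,G]$, $(g_1,g_2)\mapsto [g_1,g_2]\cdot[H,G]$. The key computational facts, all routine consequences of the Hall--Witt identities or direct manipulation, are: (a) this map is bilinear modulo $[H,G]$, i.e.\ $[g_1g_1',g_2]\equiv [g_1,g_2][g_1',g_2]$ and similarly in the second variable, because the correction terms are conjugates of commutators lying in $[[G,G],G]\subseteq [H,G]$; (b) it is alternating, since $[g,g]=1$; (c) it kills $H\times G$ and $G\times H$, since $[H,G]$ is exactly what we quotient by; and (d) it kills $Z(G)\times G$ and $G\times Z(G)$ trivially. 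By (c) and (d) the pairing factors through $B\times B$, and by (a) and (b) the induced map $B\times B\to [G,G]/[H,G]$ is $\mathbb Z$-bilinear and alternating, hence factors through the exterior square $B\wedge B$. Surjectivity is immediate: $[G,G]/[H,G]$ is generated by the classes of commutators $[g_1,g_2]$, which are precisely the images of the generators $\bar g_1\wedge \bar g_2$.

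The only subtlety — and **the main point to get right** — is step (a), verifying bilinearity modulo $[H,G]$. One has $[g_1g_1',g_2] = g_1[g_1',g_2]g_1^{-1}\cdot[g_1,g_2]$, so the defect from exact multiplicativity is $g_1[g_1',g_2]g_1^{-1}[g_1',g_2]^{-1} = \big[g_1,[g_1',g_2]\big]$, which lies in $[G,[G,G]]\subseteq [G,H] = [H,G]$ since $[G,G]\subseteq H$. The second-variable identity is analogous. Once these containments are observed, everything else is formal, so I do not anticipate a genuine obstacle; the work is entirely in organizing the commutator identities so that the exterior-square universal property applies cleanly.
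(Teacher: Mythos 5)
Your proposal is correct and follows essentially the same route as the paper: both define the commutator pairing $G\times G\to [G,G]/[H,G]$, check bimultiplicativity modulo $[H,G]$ via a commutator identity whose error term lies in $[G,[G,G]]\subseteq[H,G]$, observe that $[H\,Z(G),G]\subseteq[H,G]$ so the map factors through $B\otimes_{\mathbb Z}B$, and use $[g,g]=1$ to pass to $B\wedge B$, with surjectivity because commutators generate. No gaps.
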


\begin{proof}
Since $[G,G]\subseteq H$, we have 
$\big[[G,G],[G,G]\big] \subseteq [H,G]$, so $[G,H]$ is a normal 
subgroup of $[G,G]$ with abelian factor group.  Consider the map
$\beta\colon G\times G \to [G,G]/[H,G]$ given by 
$(a,b)\mapsto aba^{-1}b^{-1}[H,G]$.  For any $a,b,c \in G$ we have 
the commutator identity ${[a,bc] = [a,b]\, [b,[a,c]]\, [a,c]}$.
The middle term $[b,[a,c]]$ lies in $[H,G]$.  Thus, $\beta$ is 
multiplicative in the second variable; likewise, it is multiplicative 
in the first variable.  As ${[ H\,Z(G), G]\subseteq [H,G]}$, this
$\beta$ induces a well-defined group homomorphism 
$\beta'\colon B\otimes_{\mathbb Z} B \to [G,G]/[H,G]$, which is 
surjective since $\im(\beta)$ generates $[G,G]/[H,G]$.  Since 
$\beta'(\eta\otimes \eta) = 1$ for all $\eta\in B$, 
there is an induced epimorphism $B\wedge B\to [G,G]/[H,G]$.
\end{proof}

\begin{corollary} \label{skunramthm}
Let $E$ be a graded division ring with  
graded center $T$.

\begin{enumerate}
\item [(i)] If $E$ is unramified, then $\SK(E) \cong \SK(E_0)$.

\item [(ii)]  If $E$ is totally ramified, then
$\SK(E)\cong\mu_n(T_0)/\mu_e(T_0)$ where $n = \ind(E)$ and 
$e$ is the exponent of $\Gamma_E/\Gamma_T$.

\item[(iii)] If $E$ is semiramified, then for 
$G = \Gal(E_0/T_0) \cong 
\Gamma_E/\Gamma_T$ there is an exact sequence
\begin{equation}\label{semiramseq1}
G\textstyle\wedge G  \ \to  \  \widehat H^{-1}(G,E_0^*) 
 \ \to \  \SK(E) \  \to \  1.
\end{equation}

\item[(iv)] If $E$ has  maximal subfields $L$ and $K$ which are 
respectively unramified and totally ramified 
over $T$, then $E$ is semiramified and 
$\SK(E)\cong \widehat H^{-1}(\Gal(E_0/T_0),E_0^*)$.
\end{enumerate}
\end{corollary}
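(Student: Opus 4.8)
The plan is to derive all four parts from Theorem~\ref{bigdiag} by specializing the general diagram~\eqref{ershovsk1} and computing the pieces that appear in it. The key preliminary observation is that for each of the cases the invariants $\delta$, $G$, $Z(E_0)$, $E_0$, $\Gamma_E/\Gamma_T$ simplify dramatically, so that the exact rows and column of \eqref{ershovsk1} collapse to short, readable sequences. Throughout I would use Prop.~\ref{normfacts}, especially part (iv) and part (iii) (which gives $E^{(1)}\subseteq E_0$), and the computation $E^*/(E_0^*Z(E^*))\cong\Gamma_E/\Gamma_T$ already recorded in the proof of Theorem~\ref{bigdiag}.

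For (i), if $E$ is unramified then $\Gamma_E=\Gamma_T$, so $\Gamma_E/\Gamma_T$ is trivial and the wedge term vanishes; hence the lower row of \eqref{ershovsk1} gives $E^{(1)}/[E_0^*,E^*]\xrightarrow{\sim}\SK(E)$. Moreover $[E:T]=[E_0:T_0]$ forces $\ind(E)=\ind(E_0)$ and $Z(E_0)=T_0$, so $\delta=1$, the norm map $\widetilde N$ is just $\Nrd_{E_0}$, the group $G$ is trivial, and $\widehat H^{-1}(G,-)=1$; the top row then identifies $\ker\widetilde N/[E_0^*,E^*]$ with $E_0^{(1)}[E_0^*,E^*]/[E_0^*,E^*]$. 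Combining, $\SK(E)\cong E_0^{(1)}[E_0^*,E^*]/[E_0^*,E^*]$, and since $[E_0^*,E^*]\supseteq[E_0^*,E_0^*]$ one must check this quotient equals $\SK(E_0)=E_0^{(1)}/E_0'$; the only subtlety is that $[E_0^*,E^*]$ might be bigger than $E_0'$, but $E_0^{(1)}\cap[E_0^*,E^*]$-type arguments, or simply the fact that conjugation by $E^*$ on $E_0$ is by inner automorphisms of $E_0$ together with elements of $G=1$, show $[E_0^*,E^*]=E_0'$ in the unramified case. For (ii), if $E$ is totally ramified then $E_0=T_0$, so $E_0$ is commutative, $\Nrd_{E_0}=\id$, $G=1$, the top row collapses to $\ker\widetilde N/[E_0^*,E^*]=1$, and the column of \eqref{ershovsk1} degenerates. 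Thus the lower row becomes $\Gamma_E/\Gamma_T\wedge\Gamma_E/\Gamma_T\twoheadrightarrow E^{(1)}/[E_0^*,E^*]\xrightarrow{\sim}\SK(E)$, with $\widetilde N\colon E^{(1)}\to\mu_\delta(T_0)$ here being injective because $E^{(1)}\subseteq E_0=T_0$; one then identifies the image, using that for totally ramified $E$ one has $\delta=n=\ind(E)$ and the standard computation (via the symbol/commutator pairing on $\Gamma_E/\Gamma_T$) that $E^{(1)}=\mu_n(T_0)$ and $[E_0^*,E^*]=[T_0^*,E^*]=\mu_e(T_0)$ where $e$ is the exponent of $\Gamma_E/\Gamma_T$. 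This last identification is the part requiring the most care: it amounts to the known structure theory of totally ramified graded (or valued) division algebras, where the commutator map $\Gamma_E/\Gamma_T\times\Gamma_E/\Gamma_T\to\mu(T_0)$ is a nondegenerate alternating pairing.

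For (iii), if $E$ is semiramified then $E_0$ is a field, $Z(E_0)=E_0$, $[E_0:T_0]=|\Gamma_E/\Gamma_T|=\ind(E)=n$, so $\ind(E_0)=1$ and $\delta=\ind(E)/[E_0:T_0]=1$. Hence $\mu_\delta(T_0)=\{1\}$, the column of \eqref{ershovsk1} is trivial below $E^{(1)}/[E_0^*,E^*]$, i.e.\ $E^{(1)}=\ker\widetilde N$, and $\Nrd_{E_0}=\id$ makes the top row read $1\to E_0^{(1)}[E_0^*,E^*]/[E_0^*,E^*]=1\to\ker\widetilde N/[E_0^*,E^*]\xrightarrow{\sim}\widehat H^{-1}(G,E_0^*)\to1$, using $\widetilde N=N_{E_0/T_0}$ and \eqref{kerNG} with $A=E_0^*$. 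Feeding this isomorphism into the lower row, together with $\Gamma_E/\Gamma_T\cong G=\Gal(E_0/T_0)$ (the Galois statement coming from Prop.~2.3 of \cite{hwcor} together with $[E_0:T_0]=|\Gamma_E/\Gamma_T|$), yields exactly the sequence \eqref{semiramseq1}. For (iv), the presence of an unramified maximal subfield $L$ with $[L:T]=n$ and $L_0=\overline L\subseteq E_0$ with $[L_0:T_0]=[L:T]=n=\ind(E)$ forces $[E_0:T_0]\ge n$; since always $[E_0:T_0]\le\ind(E_0)[Z(E_0):T_0]\le\ind(E)=n$ we get $E_0$ is a field and $[E_0:T_0]=n$, i.e.\ $E$ is semiramified, and then part (iii) applies. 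The extra input is that the totally ramified maximal subfield $K$ splits $\Gamma_E/\Gamma_T$ compatibly, which makes $E$ a ``nondegenerate'' semiramified algebra; for such algebras the map $G\wedge G\to\widehat H^{-1}(G,E_0^*)$ in \eqref{semiramseq1} is zero, because the commutators $[e_\alpha,e_\beta]$ realizing the wedge can be chosen in $K^*\cap E_0^*=T_0^*$, hence map to $N_{E_0/T_0}$-norms, i.e.\ into $I_G(E_0^*)$. Granting that vanishing, \eqref{semiramseq1} collapses to $\SK(E)\cong\widehat H^{-1}(\Gal(E_0/T_0),E_0^*)$.

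The main obstacle across all four parts is not the diagram chase — that is formal once \eqref{ershovsk1} is in hand — but the explicit identification of $[E_0^*,E^*]$ and of the image/triviality of the wedge map in the totally ramified and doubly-split semiramified cases. In (ii) this requires knowing that for a totally ramified graded division algebra the commutator pairing $\Gamma_E/\Gamma_T\wedge\Gamma_E/\Gamma_T\to\mu_n(T_0)$ has image generated by the $e$-th roots of unity and that $E^{(1)}=\mu_n(T_0)$; in (iv) it requires choosing the homogeneous generators witnessing $\Gamma_E/\Gamma_T$ inside the totally ramified subfield $K$ so that their pairwise commutators land in $T_0^*$ and thus die in $\widehat H^{-1}$. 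Both are standard in the structure theory of (graded) division algebras and I would cite or reprove them using the degree map \eqref{degmap} and the nondegeneracy of the commutator form; once they are granted, each of (i)--(iv) is a one- or two-line consequence of Theorem~\ref{bigdiag}.
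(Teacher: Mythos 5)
Your strategy---specializing diagram~\eqref{ershovsk1} case by case---is exactly the paper's, and parts (i) and (iii) go through essentially as you describe (for (i) the paper obtains $[E_0^*,E^*]=E_0'$ more directly from $E^*=E_0^*T^*$, but your Skolem--Noether route also works since $Z(E_0)=T_0$ there). Two of your steps, however, fail as written.

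In (ii), since $E_0=T_0\subseteq Z(E)$, the group $[E_0^*,E^*]=[T_0^*,E^*]$ is \emph{trivial}, not $\mu_e(T_0)$; and your reading of the lower row as a surjection followed by an isomorphism is inconsistent with exactness unless $\SK(E)=1$. What the diagram actually yields is $E^{(1)}\cong\mu_n(T_0)$ (from the exact column: $\ker\widetilde N=1$ and $\dlambda=n$), and the second required input is $E'=[E^*,E^*]\cong\mu_e(T_0)$ --- the image of the commutator pairing on $\Gamma_E/\Gamma_T$, which the paper quotes from \cite{hwcor}, Prop.~2.1. Then $\SK(E)=E^{(1)}/E'\cong\mu_n(T_0)/\mu_e(T_0)$ directly; you do not need the lower row at all, but you do need to keep $E'$ and $[E_0^*,E^*]$ distinct.

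In (iv), the inequality $[E_0:T_0]\le\ind(E_0)\,[Z(E_0):T_0]$ is false: one has $[E_0:T_0]=\ind(E_0)^2\,[Z(E_0):T_0]$, so for instance an unramified $E$ has $[E_0:T_0]=n^2$. Semiramifiedness cannot be deduced from $L$ alone; $K$ is needed here too. The correct count is: $[L_0:T_0]=[L:T]=n$ gives $[E_0:T_0]\ge n$, while $|\Gamma_K:\Gamma_T|=[K:T]=n$ gives $|\Gamma_E:\Gamma_T|\ge n$, and the Fundamental Equality~\eqref{fundeq} ($[E_0:T_0]\,|\Gamma_E:\Gamma_T|=n^2$) then forces both to equal $n$, whence $E_0=L_0$ is a field and $\Gamma_E=\Gamma_K$. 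Finally, for the vanishing of the left map in~\eqref{semiramseq1}, the reason is simply that representatives of $\eta,\nu\in\Gamma_E/\Gamma_T$ may be chosen in $K^*$ (because $\Gamma_E=\Gamma_K$) and therefore commute, so the relevant commutators equal $1$. Your fallback claim that elements of $T_0^*$ ``map to norms, i.e.\ into $I_G(E_0^*)$'' is not a valid principle: a $G$-fixed element of $E_0^*$ need be neither a norm nor an element of $I_G(E_0^*)$.
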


\begin{proof}  (i)   Since $E$ is unramified over $T$, we have
$E_0$ is a 
central $T_0$-division algebra, ${\ind(E_0)=\ind(E)}$, and 
$E^*=E_0^*T^*$. It follows that $G=\Gal(Z(E_0)/T_0)$ is trivial,
 and thus 
$\widehat H^{-1}(G,\Nrd_{E_0}(E_0))$ is trivial;
also, $\dlambda=1$,  
and from (\ref{NrdD0}), $\Nrd_{E_0}(a)=\Nrd_E(a)$ for all $a\in E_0$.
Furthermore, $[E_0^*, E^*] = [E_0^*, E_0^*T^*] = [E_0^*, E_0^*]$ 
as $T^*$ is central. 
Plugging this information  into  the exact top row of diagram  
(\ref{ershovsk1}) and 
noting that the exact sequence extends to the left by 
$1\rightarrow [E_0^*,E^*]/[E_0^*,E_0^*]\rightarrow \SK(E_0)$, 
part (i) follows.

 (ii) When $E$ is totally ramified, $E_0=T_0$, $\dlambda=n$,   
$\widetilde N$ is  the 
identity map on $T_0$, and ${[E^*, E_0^*] = [E^*,T_0^*] = 1}$.  
Plugging all this into the exact column  of 
diagram (\ref{ershovsk1}), it follows that 
$E^{(1)}\cong \mu_n(T_0)$. Also by \cite{hwcor} Prop.~2.1,
$E'\cong\mu_e(T_0)$ where $e$ is the exponent of 
the torsion abelian group $\Gamma_E/ \Gamma_T$.  
Part (ii) now follows.

 (iii)  As recalled at the beginning of the proof of 
Th.~\ref{bigdiag}, for any graded division algebra~$E$ with
center $T$, we have $Z(E_0)$ is Galois over $T_0$, and there
is an epimorphism ${\theta\colon E^* \to \Gal(Z(E_0)/T_0)}$.
Clearly, $E_0^*$ and $T^*$ lie in $\ker(\theta)$, so 
$\theta$ induces an epimorphism $\theta'\colon \Gamma_E/\Gamma_T \to 
\Gal(Z(E_0)/T_0)$.  When $E$ is semiramified, by definition 
${[E_0:T_0] = |\Gamma_E:\Gamma_T| = \ind(E)}$ and $E_0$ is a field.
Let $G = \Gal(E_0/T_0)$. Because $|G| = [E_0:T_0] = 
|\Gamma_E:\Gamma_T|$, the map $\theta'$ must be an isomorphism.
In diagram~\eqref{ershovsk1}, since $\SK(E_0) = 1$ and clearly 
$\delta = 1$, the exact top row and column yield 
$E^{(1)}\big/[E_0^*, E^*] \cong \widehat H^{-1}(G, E_0^*)$.
Therefore, the exact row~\eqref{semiramseq1} follows from the 
exact second row of diagram~\eqref{ershovsk1} and the 
isomorphism $\Gamma_E/\Gamma_T \cong G$ given by $\theta'$.

 (iv) Since $L$ and $K$ are maximal subfields 
of $E$, we have ${\ind(E) = [L:T] = [L_0:T_0] \le [E_0:T_0]}$
and $\ind(E) = [K:T] = |\Gamma_K:\Gamma_T| \le |\Gamma_E:\Gamma_T|$.  
It follows from \eqref{fundeq} that these inequalities are
equalities, so $E_0 = L_0$ and $\Gamma_E = \Gamma_K$.  
Hence, $E$ is semiramified, and (iii) applies.
  Take any $\eta, \nu\in \Gamma_E/\Gamma_T$, and any inverse images 
$a,b$ of $\eta, \nu$ in $E^*$.  The left map in \eqref{semiramseq1}
sends $\eta \wedge \nu$ to $aba^{-1} b^{-1}$ mod $I_G(E_0^*)$.
Since $\Gamma_E = \Gamma_K$, these $a$ and $b$ can be chosen in $K^*$,
so they commute.  Thus, the left map of \eqref{semiramseq1} is trivial 
here, yielding the isomorphism of (iv).
\end{proof}

For a graded division algebra  $E$  with center $T$, define 
\begin{equation}\label{ckdef}
\CK(E)  \,=  \, E^{\ast}\big/ (T^{\ast} E').
\end{equation}
This is the graded analogue to $\CK(D)$ for a division algebra $D$,
which is defined as 
\break$\CK(D) = D^*\big/(F^*D')$, where $F = Z(D)$.
That is, $\CK(D)$ is the cokernel of the canonical 
map $K_1(F) \to K_1(D)$.  See \cite{sk12001} for background on  
$\CK(D)$. Notably, it is known that $\CK(D)$ is 
torsion of bounded exponent $n=\ind(D)$, and $\CK$ has  functorial 
properties similar to~$\SK$. 
The $\CK$ functor was used in \cite{hazwadsworth} in showing that
for \lq\lq nearly all" 
division algebras~$D$, the multiplicative group 
$D^*$ has a maximal proper subgroup. 
It is conjectured (see \cite{hazwadsworth} and its references)
that  if $\CK(D)$ is trivial, then 
 $D$ is a quaternion division 
algebra (necessarily over  a real Pythagorean field).

%%  Thus 
%%  if $\CK(E)$ is not trivial then $E^*$ has a maximal subgroup, 
%% which its existence in general seems not to be known. It is 
%% conjectured that $\CK(E)$ is trivial if and only if $E$ is an 
%% ordinary quaternion division algebra over the real Pythagorean 
%% field $T$ \cite{hazwadsworth}.

Now, for the graded division algebra $E$ with center $T$, the 
degree map (\ref{degmap})  induces a surjective map 
$E^{\ast} \ra \Ga_{E} / \Ga_{T}$ which has kernel 
$T^{\ast}
{E_{0}}^{\ast}$. One can then observe that there is an 
exact sequence
\begin{displaymath}
1  \ \lra  \ {{E_{0}}^{\ast}}\big/{{T_{0}}^{\ast} E'} 
 \ \lra  \ \CK(E)  \ \lra \ 
{\Gamma_{E}}\big/{\Gamma_{T}}  \ \lra \  1.
\end{displaymath}
Thus if $E$ is unramified,  
$\CK(E) \cong {E_0}^{\ast} /({T_{0}}^{\ast}
E')$ and  $E^{\ast} \cong T^{\ast} {E_{0}}^{\ast}$. 
It then  follows that
$E' \cong {E_{0}}'$, yielding $\CK(E) \cong \CK(E_0)$. At the other extreme, when $E$ is
totally ramified then ${E_{0}}^{\ast} \big/ ({T_{0}}^{\ast} E')
= 1$, so the exact sequence 
above yields $\CK(E) \cong \Ga_E /\Ga_T$.

\section{$\SK$ of a valued division algebra and its associated graded 
division algebra}\label{valued}

 The aim of this section is to study the relation between 
the reduced Whitehead group (and other related functors) of a 
valued division algebra with that of its corresponding graded division 
algebra. We will prove that $\SK$ of a tame valued division 
algebra over a henselian field
 coincides with $\SK$ of its associated graded division 
algebra.  We start by recalling the concept of 
$\lambda$-polynomials introduced in \cite{MWad}.  
We keep the notations introduced 
in \S\ref{prel}.

Let $F$ be a field with valuation $v$, 
let $\gr(F)$ be the associated graded
field, and  $F^{alg}$ the algebraic closure of $F$. For 
$a \in F^{\ast}$,
let $\ti{a} \in \gr(F)_{v(a)}$ be the image of $a$ in $\gr(F)$, 
let $\ti{0}=
0_{\gr(F)}$, and  for $f = \sum a_i x^i \in F[x]$, let 
$\ti{f} = \sum {\ti{a}}_i x^i
\in \gr(F)[x]$. 

\begin{deff}\label{defnum}
Take any $\la$ in the divisible hull of $\Ga_F$ and let 
 ${f= a_n x^n
+ \ldots +a_i x^i + \ldots + a_0 \in F[x]}$ with $a_n a_0 \neq 0$. 
Take any extension of $v$ to $F^{alg}$. We
say that $f$ is a {\it $\la$-polynomial} if it satisfies the following
equivalent conditions:
\begin{enumerate}
\item[(a)] Every root of $f$ in $F^{alg}$ has value $\la$;

\item[(b)] $v(a_i) \geq (n-i)\la +v(a_n)$ for all $i$ and 
$v(a_0) = n \la +v(a_n)$;

\item[(c)] Take  any
$c \in F^{alg}$ with $v(c) = \la$ and let ${h=
\frac{1}{a_n c^n} f(cx) \in F^{alg}[x]}$; then $h$ is monic in
$V_{F^{alg}}[x]$ and $h(0) \neq 0$ (so $h$ is a $0$-polynomial).
\end{enumerate}
\end{deff}

If $f$ is a $\la$-polynomial, let 
\begin{equation}\label{flambda}
f^{(\la)}  \ = \ \textstyle \sum\limits_{i=0}^{n}
a'_i x^i  \, \in \,  \gr(F)[x],
\end{equation}
where $a'_i$ is the image of $a_i$ in
$\gr(F)_{(n-i)\la + v(a_n)}$ (so $a_0' = \widetilde {a_0}$
and $a_n' = \widetilde{a_n}$, but  for $1\le i \le n-1$,
$a'_i =0$ if $v(a_i) > (n-i) \la
+v(a_n)\,$).
Note that $f^{(\la)}$ is a homogenizable polynomial in $\gr(F)[x]$, i.e.,
$f^{(\la)}$ is homogeneous (of degree $v(a_0)$) with respect
to the  the grading on  $\gr(F)[x]$ as in \eqref{homogenizable} with 
$\theta = \lambda$. Also, $f^{(\la)}$ has the same degree
as $f$as a polynomial in $x$.
 
The $\lambda$-polynomials are useful generalizations of polynomials
$h\in V_F[x]$ with $h(0)\ne 0$---these are $0$-polynomials.  The 
following proposition collects some basic properties of 
$\lambda$-polynomials over henselian fields, which are 
analogous to well-known 
properties for $0$-polynomials, and have similar proofs. 
See, e.g., \cite{EP}, Th.~4.1.3, pp.~87--88 for proofs for 
$0$-polynomials, and 
\cite{MWad} for proofs for $\lambda$-polynomials.

\begin{proposition}  \label{lambdafacts} 
Suppose the valuation $v$ on $F$ is henselian.
Then,
\begin{enumerate}
\item[(i)] If $f$ is a $\la$-polynomial and $f=gh$ in $F[x]$, then $g$
and $h$ are $\la$-polynomials and $f^{(\la)}= g^{(\la)} h^{(\la)}$
in $\mathrm{gr}(F)[x]$. So, if $f^{(\la)}$ is irreducible in 
$\mathrm{gr}(F)[x]$, 
then $f$ is irreducible in $F[x]$.

\item[(ii)] If $f= \sum_{i=0}^{n} a_i x^i$ is irreducible in $F[x]$ with
$a_n a_0 \neq 0$, then 
%(as $v$ is henselian) 
$f$ is a
$\la$-polynomial for $\la = (v(a_0)-v(a_n))/n$. Furthermore,
$f^{(\la)} = \ti{a_n} h^s$ for some irreducible monic
$\la$-homogenizable polynomial $h \in \mathrm{gr}(F)[x]$.

\item[(iii)] If $f$ is a $\la$-polynomial in $F[x]$ and 
if $f^{(\la)} = g' h'$ in $\mathrm{gr}(F)[x]$ with 
$\mathrm{gcd}(g', h')=1$, then there exist $\la$-polynomials 
$g, h \in F[x]$ such that $f=gh$  and
$g^{(\la)} = g'$ and $h^{(\la)}=h'$.

\item[(iv)] If $f$ is a $\la$-polynomial in $F[x]$ and if 
$f^{(\la)}$ has
a simple root $b$ in $\mathrm{gr}(F)$, then $f$ has a simple root $a$ in $F$
with $\ti a=b$.

\item[(v)] Suppose $k$ is a $\la$-polynomial in $\gr(F)[x]$ with 
$k(0)\neq 0$, and suppose $f \in F[x]$ with $\ti f =k$. Then $f$ is a
$\la$-polynomial and $f^{(\la)}=k$.

\end{enumerate}
\end{proposition}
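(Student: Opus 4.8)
The strategy throughout is to reduce each statement to the corresponding well-known fact for $0$-polynomials over a henselian field (Hensel's lemma, the uniqueness of lifting of coprime factorizations, etc.) via the normalization in Definition~\ref{defnum}(c). Concretely, fix $\la$ in the divisible hull of $\Ga_F$; since the valuation is henselian, I may work inside $V_{F^{alg}}$. For a $\la$-polynomial $f$ of degree $n$, pick $c\in F^{alg}$ with $v(c)=\la$ (taking $c$ in a suitable finite tame extension so nothing is lost), and set $h=\frac1{a_nc^n}f(cx)$, which by (c) is monic in $V_{F^{alg}}[x]$ with $h(0)\ne0$, i.e.\ a $0$-polynomial. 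The point is that the substitution $x\mapsto cx$ together with scaling induces, on the graded side, the graded ring automorphism of $\gr(F^{alg})[x]$ sending $x\mapsto \widetilde c\,x$ that carries the $\theta=\la$ grading on $\gr(F)[x]$ to the $\theta=0$ grading; under it $f^{(\la)}$ maps (up to the unit $\widetilde{a_n}$) to $\overline h$, the residue polynomial of $h$. So every assertion about $f$ and $f^{(\la)}$ translates into the analogous assertion about $h$ and $\overline h$, which is classical. One subtlety: $\gr(F^{alg})$ is only a graded field, not the associated graded of a field whose residue field is algebraically closed; but $\gr(F^{alg})_0=\overline{F^{alg}}$ is algebraically closed and $\Ga_{F^{alg}}$ is divisible, which is exactly what is needed for Hensel-type arguments. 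I would either carry this out directly or, as the preamble suggests, simply cite \cite{MWad} and \cite{EP}.

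For (i), if $f=gh$ with $f$ a $\la$-polynomial, then every root of $g$ and of $h$ is a root of $f$, hence has value $\la$ by Definition~\ref{defnum}(a); thus $g,h$ are $\la$-polynomials. The multiplicativity $f^{(\la)}=g^{(\la)}h^{(\la)}$ follows because $(\cdot)^{(\la)}$ is, after the normalization above, just reduction of $0$-polynomials, which is multiplicative; alternatively, compare leading and trailing coefficients (which are genuine $\widetilde{(\cdot)}$ images and hence multiply correctly) and use that the product of the degrees adds. The contrapositive gives the irreducibility statement. For (ii), an irreducible $f=\sum a_ix^i$ has all its roots conjugate over $F$, hence of equal value $\la$; summing, $n\la = v(a_0)-v(a_n)$, giving the stated $\la$. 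Then $h=\frac1{a_nc^n}f(cx)$ is a $0$-polynomial that is irreducible over $F^{alg}$... no---rather, I pass to $\overline h=\widetilde{a_n}^{\,-1}f^{(\la)}(\widetilde c\,x)$ up to the grading shift; since $f$ is irreducible over $F$, using part (i) in contrapositive form together with the structure of irreducible polynomials over henselian fields (all irreducible factors of $\overline h$ coincide up to the defect, cf.\ the classical statement that the residue of an irreducible polynomial is a power of an irreducible one times a constant), $f^{(\la)}=\widetilde{a_n}\,h^s$ for a single irreducible monic $\la$-homogenizable $h$.

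Parts (iii)--(v) are the Hensel-type lifting statements and are where the henselian hypothesis is doing the real work. For (iii), normalize $f$ to a $0$-polynomial $\mathfrak f$ as above; the hypothesis $f^{(\la)}=g'h'$ with $\gcd(g',h')=1$ becomes a coprime factorization of $\overline{\mathfrak f}$ in $\gr(F)_0[x]=\overline F[x]$, which lifts by (the polynomial form of) Hensel's lemma to $\mathfrak f=\mathfrak g\mathfrak h$ in $V_F[x]$ with prescribed residues; undoing the substitution $x\mapsto c^{-1}x$ and rescaling produces the desired $\la$-polynomials $g,h$ with $g^{(\la)}=g'$, $h^{(\la)}=h'$. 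Part (iv) is the special case of (iii) in which $g'=x-b$ is a simple linear factor, i.e.\ ordinary Hensel for a simple root, transported through the normalization; the resulting root $a$ of $f$ satisfies $\widetilde a=b$ by tracking the leading coefficient. Part (v) is the easiest: given a $\la$-polynomial $k\in\gr(F)[x]$ with $k(0)\ne0$ and $f\in F[x]$ with $\widetilde f=k$, the equality $\widetilde f=k$ already forces $v(a_i)\ge(n-i)\la+v(a_n)$ with equality for $i=0$ and $i=n$ (that is exactly what $\widetilde f=k$ records together with $k(0)\ne 0$ and $\deg k=n$), so $f$ is a $\la$-polynomial and $f^{(\la)}=k$ by definition of $(\cdot)^{(\la)}$. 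The main obstacle is purely expository: setting up the normalization/substitution dictionary in (iii)--(v) cleanly—keeping track of where $c$ lives and checking that the graded automorphism $x\mapsto\widetilde c\,x$ intertwines the $\la$- and $0$-gradings correctly—after which everything reduces to standard henselian algebra, so I would keep the write-up brief and lean on \cite{MWad} and \cite{EP}.
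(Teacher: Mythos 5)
The paper itself supplies no proof of this proposition: it remarks that the statements are analogous to the standard facts for $0$-polynomials and defers to \cite{EP} and \cite{MWad}. Your plan --- reduce each part to the $0$-polynomial case via the normalization of Definition~\ref{defnum}(c) --- is exactly the reduction those sources carry out, and your arguments for (i), (ii) and (v) are correct as sketched (for (ii) you also need the small observation that a monic factor $h$ of the homogeneous polynomial $f^{(\la)}$ in the graded domain $\gr(F)[x]^{\la}$ is itself homogeneous, by comparing top and bottom homogeneous components of $h^s$).

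The one step that is not ``purely expository'' is the choice of $c$ and the subsequent descent in (iii). First, ``taking $c$ in a suitable finite tame extension'' is not always possible: if $e$ is the order of $\la+\Ga_F$ in the divisible hull of $\Ga_F$ modulo $\Ga_F$ and $\chr(\overline F)\mid e$, then any extension containing an element of value $\la$ has ramification index divisible by $e$ and so is wildly ramified. Second, and more seriously, Hensel's lemma applied to $\mathfrak f=\frac{1}{a_nc^n}f(cx)$ yields a coprime factorization over $F(c)$, not over $F$, and getting it back down to $F$ is the real content, not bookkeeping. A fix within your framework: choose $c$ \emph{separable} over $F$ (e.g.\ a root of $x^e-tx-t$ with $v(t)=e\la>0$, which is separable even when $\chr(F)\mid e$ and all of whose roots have value $\la$ by the Newton polygon), pass to the normal closure $N$ of $F(c)/F$, use that the henselian valuation extends uniquely to $N$ so that $\Gal(N/F)$ acts on $\gr(N)$ fixing $\gr(F)$, and invoke uniqueness of the coprime Hensel lifting to see that the factors are Galois-stable, hence in $F[x]$. (Alternatively, avoid $c$ altogether: partition the roots of $f$ in $F^{alg}$ according to whether their images in $\gr(F^{alg})_\la$ are roots of $g'$ or of $h'$ --- well defined by coprimality, and stable under all $F$-embeddings --- and let $g,h$ be the corresponding products of linear factors.) Note that part (iv) does not actually suffer from this problem: a simple root $b\in\gr(F)$ of the $\la$-homogenizable polynomial $f^{(\la)}$ is necessarily homogeneous of degree $\la$, so $\la\in\Ga_F$ and $c$ may be taken in $F$. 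With the descent in (iii) supplied, your outline is complete.
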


\begin{lemma}\label{grnorm}
Let $F \subseteq K$ be fields with $[K:F] < \infty$. Let $v$ be a
henselian valuation on $F$ such that $K$ is defectless over $F$.
Then, for every $a \in K^{\ast}$, with $\widetilde{a}$ its image in
${\mathrm{gr}(K)}^{\ast}$,
$$
\widetilde{N_{K/F} (a)}  \ = \  
N_{\mathrm{gr}(K) /\mathrm{gr}(F)} (\widetilde{a}).
$$
\end{lemma}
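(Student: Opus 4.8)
The plan is to reduce the general case to the case where $K/F$ is either purely inseparable or separable, and within the separable case to pass to a Galois closure so that the norm becomes a product over embeddings. First I would record that since $v$ is henselian and $K/F$ is defectless, for each $a\in K^*$ we have $\widetilde a\in \gr(K)_{v(a)}$ is a nonzero homogeneous element, and $[\gr(K):\gr(F)] = [K:F]$ by defectlessness; moreover every intermediate field is again defectless over $F$ (defect is multiplicative in towers), so all the reductions preserve the hypotheses. The multiplicativity of both $N_{K/F}$ and $N_{\gr(K)/\gr(F)}$, together with the fact that $d\mapsto \widetilde d$ is multiplicative on $K^*$ into $\gr(K)^*$ (analogue of \eqref{valhomin}), lets me split $K/F$ through the separable closure $K_0$ of $F$ in $K$: it suffices to prove the claim separately for $K_0/F$ (separable) and for $K/K_0$ (purely inseparable), since $N_{K/F} = N_{K_0/F}\circ N_{K/K_0}$ and similarly on the graded side — here I need that $\gr(K_0)$ is the separable closure of $\gr(F)$ in $\gr(K)$ and $\gr(K)/\gr(K_0)$ purely inseparable, which follows from the theory of graded algebraic extensions in \cite{hwalg}.

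For the \emph{separable} case I would take a Galois extension $L/F$ with $F\subseteq K\subseteq L$; by the primitive element theorem and henselianity one can arrange (after replacing $a$ by a generic enough element, or by additivity reducing to generators) to compute $N_{K/F}(a)$ as $\prod_\sigma \sigma(a)$ over the coset representatives $\sigma$ of $\Gal(L/K)$ in $\Gal(L/F)$. The key input is that for a tame/defectless Galois extension the Galois group acts on $\gr(L)$ and the induced map $\Gal(L/F)\to \Aut(\gr(L)/\gr(F))$ is surjective with the right orbit structure, so that $\widetilde{\sigma(a)} = \bar\sigma(\widetilde a)$ for a suitable automorphism $\bar\sigma$ of $\gr(L)$, and the $\bar\sigma$ run exactly over the coset representatives of $\mathcal{G}al(\gr(L)/\gr(K))$ in $\mathcal{G}al(\gr(L)/\gr(F))$. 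Applying $\,\widetilde{\phantom{x}}\,$ to $N_{K/F}(a) = \prod_\sigma \sigma(a)$ and using multiplicativity then gives $\widetilde{N_{K/F}(a)} = \prod_{\bar\sigma}\bar\sigma(\widetilde a) = N_{\gr(K)/\gr(F)}(\widetilde a)$. The \emph{purely inseparable} case is easier: there $N_{K/F}(a) = a^{[K:F]}$ up to a power of the residue characteristic exponent, and on the graded side $N_{\gr(K)/\gr(F)}(\widetilde a) = \widetilde a^{[K:F]}$; defectlessness guarantees $[\gr(K):\gr(F)]=[K:F]$ so the exponents match, and multiplicativity of the tilde map finishes it. Alternatively, and perhaps more cleanly, one can use $\lambda$-polynomials directly: if $h_a = x^m + t_{m-1}x^{m-1}+\dots+t_0\in F[x]$ is the minimal polynomial of $a$ over $F$, then by Prop.~\ref{lambdafacts}(ii) $h_a$ is a $\lambda$-polynomial with $\lambda = v(t_0)/m = v(a)$, its associated homogeneous polynomial $h_a^{(\lambda)}\in\gr(F)[x]$ is (a unit times) a power of the minimal polynomial of $\widetilde a$ over $\gr(F)$, and $N_{K/F}(a) = \big((-1)^m t_0\big)^{[K:F]/m}$ maps under tilde to $\big((-1)^m \widetilde{t_0}\,\big)^{[K:F]/m}$, which is exactly $N_{\gr(K)/\gr(F)}(\widetilde a)$ read off from $h_a^{(\lambda)}$; here defectlessness is what makes $[\gr(F)[\widetilde a]:\gr(F)] = m$ so that the minimal polynomial of $\widetilde a$ has the expected degree.

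The main obstacle I anticipate is the bookkeeping in the separable/Galois step: one must be sure that the extension of $v$ to the Galois closure is unique (this is where henselianity is essential), that defectlessness is inherited by the Galois closure and by all intermediate fields, and that the tilde map intertwines the $F$-embeddings $K\hookrightarrow L$ with the $\gr(F)$-embeddings $\gr(K)\hookrightarrow\gr(L)$ in a degree-preserving, bijective way — in particular that no two distinct embeddings collapse to the same graded embedding, which again uses defectlessness via the fundamental equality \eqref{fundeq}. Because of this, the $\lambda$-polynomial argument in the last paragraph is the route I would actually write up: it localizes the whole computation at the single element $a$ and its minimal polynomial, converts everything into the statements of Prop.~\ref{lambdafacts}, and sidesteps the need to track a full Galois closure. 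The only real content there is matching $N_{K/F}(a)$ and $N_{\gr(K)/\gr(F)}(\widetilde a)$ to the constant terms of $h_a$ and $h_a^{(\lambda)}$ respectively, together with the exponent identity $[K:F]/m = [\gr(K):\gr(F)]/\deg(h_a^{(\lambda)})$ supplied by defectlessness.
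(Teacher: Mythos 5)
Your second, $\lambda$-polynomial argument is exactly the paper's proof: homogenize the minimal polynomial $h_a$ of $a$ over $F$ (a $\lambda$-polynomial with $\lambda=v(a)$ by Prop.~\ref{lambdafacts}(ii)), identify $h_a^{(\lambda)}$ as a power $h^s$ of the monic irreducible minimal polynomial $h$ of $\widetilde a$ over $\gr(F)$, equivalently over $q(\gr(F))$, and compare constant terms using $[\gr(K):\gr(F)]=[K:F]$ from defectlessness; the Galois-closure route you sketch first is not needed. One correction to your closing sentence: defectlessness does \emph{not} force $[\gr(F)[\widetilde a]:\gr(F)]=\deg h_a$ (take $a=1+\pi$ with $\pi$ a uniformizer of a ramified quadratic extension, so $\widetilde a=1$); the exponent $s$ can exceed $1$, and the computation closes anyway because $h(0)^s=\widetilde{t_0}$, giving $N_{\gr(K)/\gr(F)}(\widetilde a)=\big((-1)^{\deg h}h(0)\big)^{[K:F]/\deg h}=(-1)^{[K:F]}\,\widetilde{t_0}^{\,[K:F]/\deg h_a}=\widetilde{N_{K/F}(a)}$.
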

\begin{proof}
Let $n=[K:F]$. Note that $[\gr(K):\gr(F)] = n$ as $K$ is defectless
over $F$. Let 
\break$f= x^{\ell} +c_{\ell-1} x^{\ell -1} +
\ldots + c_0 \in F[x]$ be the minimal polynomial of $a$ over $F$.
Then $f$ is irreducible in $F[x]$ and since $v$ is henselian, 
% (so all the roots of $f$ have the same value)
 $f$ is a $\la
$-polynomial, where $\la = v(a) = v(c_0)/n$ (see 
%Def.~\ref{defnum}
Prop.~\ref{lambdafacts}(ii)). Let $f^{(\la)}$ be
the corresponding $\la$-homogenizable polynomial in $\gr(F)[x]$
as in \eqref{flambda}.
Then $f^{(\la)}(\widetilde{a})=0$ in $\gr(K)$
(by Prop.~\ref{lambdafacts}(i) with $g = x-a$), and 
by Prop.~\ref{lambdafacts}(ii) $f^{(\la)}$ has  
only one monic irreducible factor in $\gr(F)[x]$, say 
$f^{(\la)} =h^s$, with $\deg(h) = \ell/s$.
Since $f^{(\la)}(\widetilde{a}) =0$, $h$ must be the minimal
polynomial of $\widetilde{a}$ over $\gr(F)$ and over 
$q(\gr(F))$. (Recall that since $\gr(F)$ is integrally closed,
a monic polynomial in $\gr(F)[x]$ is irreducible in $\gr(F)[x]$
iff it is irreducible in $q(\gr(F))[x]$.)  
We have $N_{K/F}(a)   = {(-1)}^{n}c_0^{n/\ell}$. 
Hence, as $q(\gr(K)) \cong \gr(K)  \otimes_{\gr(F)} q(\gr(F))$, 
\begin{align*}
 N_{\gr(K)/\gr(F)}(\ti{a} )  \ &=  \  N_{q(\gr(K))/q(\gr(F))}(\ti a)
 \ = \ (-1)^n h(0)^{n s / \ell}  \ =
 \  (-1)^n {(h(0)^s)}^{n/\ell}
  \\ &= \  (-1)^n (\widetilde{c_0}^{n /
\ell})  \ = \  \widetilde{(-1)^n {c_0}^{n/\ell}}  \ 
= \  \widetilde{N_{K/F}(a)}.\qedhere
\end{align*}
\end{proof}

\begin{remark} The preceding lemma is still valid if $v$ on $F$ is not assumed
to be henselian, but merely assumed to have a unique  and defectless
extension to $K$. This can be proved by scalar extension to the
henselization $F^h$ of $F$. (Since $v$
extends uniquely and defectlessly to $K$,
$K \otimes_F F^h$ is a field,  and 
${\gr(K \otimes_F F^h) \conggr \gr(K)}$.)

\end{remark}

\begin{cor} \label{nrdcor}
Let $F$ be a field with henselian valuation $v$, and let $D$ be a
tame $F$-central division algebra. Then for every $a \in
D^{\ast}$, $\Nrd_{\mathrm{gr}(D)}(\ti{a}) = \widetilde{\Nrd_D(a)}$.
\end{cor}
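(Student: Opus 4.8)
The plan is to reduce the statement about reduced norms to the statement about field norms already proved in Lemma~\ref{grnorm}. First I would pick a maximal subfield $K$ of $D$ containing a given $a\in D^*$; since $D$ is tame over the henselian field $F$, the field $K$ is defectless over $F$, and $\gr(K)$ is a maximal graded subfield of $\gr(D)$ with $[\gr(K):\gr(F)] = [\gr(D):\gr(F)]^{1/2} = \ind(\gr(D))$. The key point is that the reduced norm of an element of a maximal subfield is just the field norm: $\Nrd_D(a) = N_{K/F}(a)$, and likewise $\Nrd_{\gr(D)}(\widetilde a) = N_{\gr(K)/\gr(F)}(\widetilde a)$, where the latter uses Prop.~\ref{normfacts}(ii) applied to the graded division algebra $\gr(D)$ with graded subfield $\gr(K)$ (here $[\gr(K):\gr(F)] = n = \ind(\gr(D))$, so the exponent $n/[\gr(K):\gr(F)]$ is $1$).

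Given this, I would finish by the chain
\begin{equation*}
\Nrd_{\gr(D)}(\widetilde a) \ = \ N_{\gr(K)/\gr(F)}(\widetilde a) \ = \ \widetilde{N_{K/F}(a)} \ = \ \widetilde{\Nrd_D(a)},
\end{equation*}
where the middle equality is exactly Lemma~\ref{grnorm}, valid because $v$ is henselian and $K$ is defectless over $F$. So the entire argument is: (i) reduce $\Nrd$ to a field norm on both sides via a common maximal subfield, and (ii) invoke the already-established compatibility of field norms with the grading.

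The main obstacle is verifying that $\gr(K)$ really is a maximal graded subfield of $\gr(D)$, i.e.\ that $[\gr(K):\gr(F)] = \ind(\gr(D))$, so that the power $n/[\gr(K):\gr(F)]$ in Prop.~\ref{normfacts}(ii) equals $1$. This follows from tameness: by the characterization of tame division algebras recalled in \S\ref{prel}, $[\gr(D):\gr(F)] = [D:F]$ and $Z(\gr(D)) = \gr(F)$, hence $\ind(\gr(D))^2 = [\gr(D):\gr(F)] = [D:F] = \ind(D)^2$; and since $K$ is a maximal subfield of $D$ it is defectless over $F$ (being tame), so $[\gr(K):\gr(F)] = [K:F] = \ind(D) = \ind(\gr(D))$, making $\gr(K)$ a graded subfield of $\gr(D)$ of the maximal possible dimension. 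One should also note that $\widetilde a$ generates $\gr(K)$ as a graded $\gr(F)$-algebra is not actually needed—only that $\widetilde a \in \gr(K)$ and that $\gr(K)$ is a graded subfield of $\gr(D)$ containing $\gr(F)$, which suffices to apply Prop.~\ref{normfacts}(ii). Everything else is routine bookkeeping with the degree bookkeeping built into Prop.~\ref{normfacts}(ii) and (iii).
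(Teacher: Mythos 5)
Your proposal is correct and follows essentially the same route as the paper's own proof: choose a maximal subfield of $D$ containing $a$, note it is defectless over $F$ by tameness so that its associated graded field has dimension $\ind(\gr(D))$ over $\gr(F)$, reduce both reduced norms to field norms via Prop.~\ref{normfacts}(ii), and conclude with Lemma~\ref{grnorm}. No gaps.
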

\begin{proof}  Recall from \S\ref{prel} that the assumption $D$ is 
tame over $F$ means
that ${[D:F]=[\gr(D):\gr(F)]}$ and $\gr(F) = Z(\gr(D))$. Take any maximal
subfield $L$ of $D$ containing $a$. Then $L/F$ is defectless as
$D/F$ is defectless, 
so $[\gr(L):\gr(F)] = [L:F] = \ind(D) = \ind(\gr(D))$.
  Hence,  using  Lemma~\ref{grnorm} and Prop.~\ref{normfacts}(ii), 
we have,
\[ 
\widetilde{\Nrd_D(a)}  \ = \  \widetilde{N_{L/F}(a)} 
 \ = \  N_{\gr(L)/\gr(F)}(\ti{a})  \ = \  \Nrd_{\gr(D)}(\ti{a}). \qedhere
\]
\end{proof}

\begin{remarks}
 (i) Again, we do not need that  $v$ be henselian for 
Cor.~\ref{nrdcor}. It suffices that
the valuation $v$ on $F$ extends to $D$ and $D$ is tame over $F$.

(ii) Analogous results hold for the trace and reduced trace, with
analogous proof. In the setting of Lemma~\ref{grnorm}, we have: if
$v(\Tr_{K/F}(a)) = v(a)$, then $\Tr_{K/F}(\ti{a}) =
\widetilde{\Tr_{K/F}(a)}$, but if $v(\Tr_{K/F}(a)) > v(a)$, then
$\Tr_{K/F}(\ti{a}) = 0$.

(iii)
By combining Cor.~\ref{nrdcor} with  equation~(\ref{NrdD0}), 
for a tame valued division algebra $D$ over henselian field $F$, 
we can relate the reduced norm of 
$D$ with the reduced norm of $\overline D$ as follows:
\begin{equation}\label{ershov}
\overline{\Nrd_D(a)} \ = \ N_{Z(\overline D)/\overline F}
\Nrd_{\overline D}(\overline
a)^\dlambda,
\end{equation}\label{ershnrd}
for any $a \in V_D \backslash  M_D$ 
(thus, $\Nrd_D(a) \in V_F \backslash M_F$) and 
$\dlambda=
\ind(D)\big/\big(\ind(\overline D)\,
[Z(\overline D):\overline F]\big)$  (cf.~\cite{ershov},~Cor.~2).
\end{remarks}

The next proposition will be used several times below.  
It was proved by Ershov in \cite{ershov}, Prop. 2,  
who refers to {Yanchevski\u\i} \cite{y} for part of the  
argument.  We give a proof here for the convenience 
of the reader, and also to illustrate the utility of
$\lambda$-polynomials.

\begin{prop}\label{normsurj}
Let $F \subseteq K$ be fields with henselian valuations
 $v$ such that $[K:F]<\infty$ and $K$ is tamely ramified over $F$. Then $N_{K/F}(1+M_K)=1+M_F$.
\end{prop}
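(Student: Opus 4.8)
The plan is to show both inclusions, the nontrivial one being $1+M_F \subseteq N_{K/F}(1+M_K)$. First I would reduce to the case where $K/F$ is a simple extension $K = F(a)$; since $K/F$ is tamely ramified it is separable, so by the primitive element theorem this loses no generality, and a standard induction on $[K:F]$ handles the general case once the simple case is known. So assume $K = F(a)$ with minimal polynomial $g \in F[x]$ of degree $n = [K:F]$. The inclusion $N_{K/F}(1+M_K) \subseteq 1+M_F$ is immediate: for $u \in 1+M_K$ we have $v(u) = 0$, hence $\widetilde u$ lies in $\gr(K)_0^* = \overline K^*$ and its image in $\overline K$ is $1$; by Lemma~\ref{grnorm} (applied via the defectlessness of tame extensions), $\widetilde{N_{K/F}(u)} = N_{\gr(K)/\gr(F)}(\widetilde u)$, which has degree $0$ and residue $N_{\overline K/\overline F}(1) = 1$, so $N_{K/F}(u) \in 1+M_F$.

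For the reverse inclusion, I would fix $c \in 1+M_F$ and try to solve $N_{K/F}(x) = c$ with $x \in 1+M_K$. The key device is to use $\lambda$-polynomials and the henselian lifting in Prop.~\ref{lambdafacts}(iv). Consider, for a variable $y$, the polynomial $P_c(y) = N_{K/F}(y)\cdot(\text{something})$ --- more precisely, write $N_{K/F}(1 + m)$ for $m \in M_K$ as a polynomial expression; the cleanest route is: the map $y \mapsto N_{F(a)/F}(1 + y a)$ or, better, to work with the resultant. Concretely, $N_{K/F}(z)$ for $z = \phi(a)$, $\phi \in F[x]$ of degree $< n$, equals $\mathrm{Res}(g(x), \phi(x))/(\text{lead coeff})$, a polynomial in the coefficients of $\phi$. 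I would instead argue more directly: since $\overline K / \overline F$ is separable and $\chr(\overline F) \nmid n/[\overline K:\overline F]$, the norm map $N_{\overline K/\overline F} : \overline K^* \to \overline F^*$ and (on the value-group side) the relevant power maps are surjective onto what is needed, so by a dimension/Hensel argument one lifts a solution of the residue equation. The cleanest formalization: consider the polynomial $f(x) = g(x) - (1 - c)\,g(0)/(\text{const}) \in F[x]$...

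Let me restate the approach I actually favor. Set $n = [K:F]$, let $g = x^n + c_{n-1}x^{n-1} + \dots + c_0$ be the minimal polynomial of $a$, so $g(0) = c_0 = (-1)^n N_{K/F}(a)$. Given $c \in 1 + M_F$, I want $b \in 1 + M_K$ with $N_{K/F}(b) = c$. Consider $b$ of the form $b = \psi(a)$ for a suitable $\psi \in F[x]$; then $N_{K/F}(\psi(a)) = \prod_i \psi(a_i) = \mathrm{Res}(g,\psi)$ up to sign, where $a_i$ are the roots of $g$. I would choose the ansatz $\psi(x) = 1 + t\,r(x)$ for a parameter $t \in M_F$ and fixed $r \in V_F[x]$, and expand $N_{K/F}(1 + t\, r(a)) = 1 + t\,\Tr_{K/F}(r(a)) + O(t^2)$. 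If I can pick $r$ with $v(\Tr_{K/F}(r(a))) = 0$, then the map $t \mapsto N_{K/F}(1 + t\,r(a)) - c$ is, after dividing by a unit, a $0$-polynomial in $t$ whose reduction has a simple root at the appropriate residue, and Prop.~\ref{lambdafacts}(iv) produces the desired $t \in M_F$ (hence $b \in 1+M_K$). Finding $r$ with nonzero-residue trace is exactly where tameness enters: $\Tr_{\overline K/\overline F}$ is a nonzero $\overline F$-linear functional on $\overline K$ precisely because $\overline K/\overline F$ is separable, and the scaling factor $n/[\overline K:\overline F]$ relating $\Tr_{K/F}$ on $V_K$ to $\Tr_{\overline K/\overline F}$ is prime to $\chr(\overline F)$, so it stays a unit --- this uses Remark~(ii) after Cor.~\ref{nrdcor}.

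The main obstacle I anticipate is controlling the higher-order terms: the expansion $N_{K/F}(1 + t\,r(a)) = 1 + t\,\Tr_{K/F}(r(a)) + (\text{higher order in } t)$ must be organized so that, after normalizing, one genuinely gets a $\lambda$-polynomial (with $\lambda = 0$) in the variable $t$ whose associated graded polynomial $f^{(0)}$ has a \emph{simple} root matching $c$'s residue; the linear term having unit residue is what guarantees simplicity, but verifying the $\lambda$-polynomial condition (b) of Definition~\ref{defnum} on all coefficients requires a short estimate that $v$ of each coefficient is $\geq$ the required bound, which follows because $r(a) \in V_K$ forces every elementary-symmetric-function coefficient to lie in $V_F$. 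Once that estimate is in place, Prop.~\ref{lambdafacts}(iv) finishes the simple case, and the induction $F \subseteq F(a) \subseteq K$ --- using that $F(a)/F$ and $K/F(a)$ are again tamely ramified with henselian valuations, and $M$ is compatible with the tower --- completes the proof. I would also remark, as the paper's style suggests, that henselian-ness of $v$ on $K$ is automatic once it holds on $F$, and that Prop.~\ref{lambdafacts} is being invoked over $F$ (respectively $F(a)$) each time.
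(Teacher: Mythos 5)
Your strategy is genuinely different from the paper's. The paper does not solve a norm equation inside $K$ directly: it picks a homogeneous $b\in\gr(K)\setminus\gr(F)$, lifts its ($\lambda$-homogenizable, separable) minimal polynomial to a $\lambda$-polynomial $f$ over $F$ with root $a$, and then, for a target $t\in 1+M_F$, replaces the constant term $c_0$ of $f$ by $tc_0$; the perturbed polynomial has the same associated graded polynomial, hence a root $d$ with $\widetilde d=\widetilde a$, and $N_{F(a)/F}(da^{-1})=t$. This shows surjectivity onto $1+M_F$ for the proper subextension $F(a)/F$, and induction on degree up the tower $F\subseteq F(a)\subseteq K$ finishes. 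Your route --- choose $u\in V_K$ with $\Tr_{K/F}(u)$ a unit and apply Hensel to $P(t)=N_{K/F}(1+tu)-c$ --- is the classical argument and can be made to work in one shot, with no tower and no induction. (Your opening ``reduction to the simple case by the primitive element theorem plus induction'' is vacuous, though: a finite tamely ramified extension is separable, hence already simple, so nothing is being reduced.)

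There are two real gaps as written. First, the lifting tool you invoke does not apply: $P(t)$ is \emph{not} a $0$-polynomial (nor a $\lambda$-polynomial for any $\lambda$). Definition~\ref{defnum}(b) demands the \emph{equality} $v(a_0)=n\lambda+v(a_n)$ for the constant term, equivalently that all roots of $P$ in $F^{\mathrm{alg}}$ have the same value $\lambda$. Here $v(P(0))=v(1-c)>0$ while $v(\Tr_{K/F}(u))=0$, so the Newton polygon of $P$ has at least two slopes (already visible for $n=2$: $P(t)=N(u)t^2+\Tr(u)t+(1-c)$ has one root of value $v(1-c)$ and one of value $0$). So the ``short estimate verifying condition (b)'' that you anticipate cannot succeed, and Prop.~\ref{lambdafacts}(iv) is the wrong lemma. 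The repair is easy but changes the tool: use the ordinary henselian lifting of a simple root of the residual polynomial $\overline P\in\overline F[t]$ (which factors as $\overline t\,(\overline{\Tr(u)}+\cdots)$ and has $\overline t=0$ as a simple root), or equivalently the Newton--Hensel estimate $v(P(0))>2v(P'(0))=0$. Second, the existence of $u\in V_K$ with $\Tr_{K/F}(u)\in V_F^*$ is exactly the statement that $\Tr_{K/F}(V_K)=V_F$ for tame extensions; your justification (separability of $\overline K/\overline F$ plus a ``scaling factor prime to $\chr(\overline F)$'') gestures at the right two ingredients but is not a proof --- one needs to pass through the inertia subfield, or argue via the graded trace form, and note that Remark~(ii) after Cor.~\ref{nrdcor} only tells you what happens \emph{when} $v(\Tr_{K/F}(u))=v(u)$, not that some $u$ with this property exists. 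Both gaps are fillable, but they are precisely the two load-bearing steps of your argument.
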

\begin{proof} If $s \in 1+M_K$ then $\ti{s}=1$ in $\gr(K)$. So, as 
$K$ is defectless over
 $F$ by Lemma.~\ref{grnorm},  
$\ti{N_{K/F}(s)}=N_{\gr(K)/\gr(F)}(\ti{s})=1$ in $\gr(F)$, i.e., 
$N_{K/F}(s) \in 1+M_F$. Thus $N_{K/F}(1+M_K) \subseteq1+M_F$.\\
To prove that this inclusion is an equality, we can assume $[K:F]>1$.
We have 
\break 
$[\gr(K):\gr(F)]=[K:F]>1$, since tamely ramified extensions
are defectless.   
Also, the tame ramification implies that  
$q(\gr(K))$ is separable over $q(\gr(F))$.  For, $q(\gr(F)) \cdot
\gr(K)_0$ is separable over $q(\gr(F))$ since 
$\gr(K)_0 = \ov K$ and $\ov K$ is separable over $\gr(F)_0 = \ov F$.
But also, $q(\gr(K))$ is separable over $q(\gr(F)) \cdot \gr(K)_0$
because $[q(\gr(K)):q(\gr(F) \cdot \gr(K)_0] = |\Gamma_K:\Gamma_F|$,
which is not a multiple of $\chr(\ov F)$.
 Now, take any homogenous element $b \in \gr(K)$, 
$b\not \in \gr(F)$, and let $g$ be the minimal polynomial of $b$ 
over $q(\gr(F))$. Then $g \in \gr(F)[x]$, $b$ is a simple root of 
$g$, and $g$ is $\lambda$-homogenizable where $\lambda=\deg(b)$,
by \cite{hwalg}, Prop.~2.2. 
Take any monic $\lambda$-polynomial $f \in F[x]$ with 
$f^{(\lambda)}=g$. Since $f^{(\lambda)}$ has the simple root $b$ 
in $\gr(K)$ and the valuation on $K$ is henselian, 
by Prop.~\ref{lambdafacts}(iv) there is 
$ a\in K$ such that $a$ is a simple root of~$f$
and $\widetilde a = b$. Let 
$L=F(a) \subseteq K$. Write $f=x^n+c_{n-1}x^{n-1}+\ldots+c_0$. 
Take any $t \in 1+M_F$, and let $h=x^n+c_{n-1}x^{n-1}+\ldots+ c_1x+
tc_0 \in F[x]$. 
Then $h$ is a $\lambda$-polynomial (because $f$ is) and 
$h^{(\lambda)}=f^{(\lambda)} = g$ in $\gr(F)[x]$. Since $h^{(\lambda)}$ 
has the simple root $b$ in $\gr(L)$, $h$ has a simple root ~$d$ in 
$L$ with $\ti{d}= b = \ti{a}$
by Prop.~\ref{lambdafacts}(iv). So, $da^{-1} \in 1+M_L$. 
The polynomials $f$ and $h$ are irreducible $F[x]$ by 
Prop.~\ref{lambdafacts}(i), as $g$ is irreducible in 
$\gr(F)[x]$.  Since $f$ 
(resp.~$h$), is the minimal polynomial of $a$ (resp.~$d$) over $F$, 
we have $N_{L/F}(a)=(-1)^n c_0$ and $N_{L/F}(d)=(-1)^n c_0t$. Thus, 
$N_{L/F}(da^{-1})=t$, showing 
that $N_{L/F}(1+M_L)=1+M_F$. If $L=K$, we are done. If not, we 
have $[K:L] <[K:F]$, and $K$ is tamely ramified over $L$. So, by 
induction on $[K:F]$, we have $N_{K/L}(1+M_K)=1+M_L$. 
Hence, 
\[
N_{K/F}(1+M_K) \ = \ N_{L/F}\big(N_{K/L}(1+M_K)\big)
 \ = \ N_{L/F}(1+M_L) \ = \ 1+M_F.
\qedhere
\]
\end{proof}

%% The following corollary of the proposition will be used to prove 
%% the main results of this section
%% Th.~\ref{sk1prop} and  Th.~\ref{nrdcor} as well as the Congruence 
%% Theorem in Appendix~\ref{apencongru}. 

\begin{corollary} \label{Dnormsurj}
Let $F$ be a field with henselian valuation $v$, and let $D$ be an 
$F$-central 
division algebra which is tame with respect to $v$. Then, 
$\Nrd_D(1+M_D)=1+M_F$. 
\end{corollary}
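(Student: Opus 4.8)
The plan is to reduce the reduced-norm statement for $D$ to the field-norm statement for a suitable maximal subfield, which is exactly what Proposition~\ref{normsurj} provides. First I would take any $s\in 1+M_D$ and show $\Nrd_D(s)\in 1+M_F$: since $\widetilde s = 1$ in $\gr(D)$, Corollary~\ref{nrdcor} gives $\widetilde{\Nrd_D(s)} = \Nrd_{\gr(D)}(\widetilde s) = \Nrd_{\gr(D)}(1) = 1$ in $\gr(F)$, which says precisely $v(\Nrd_D(s)) = 0$ and $\overline{\Nrd_D(s)} = 1$, i.e. $\Nrd_D(s)\in 1+M_F$. So the inclusion $\Nrd_D(1+M_D)\subseteq 1+M_F$ is automatic.

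For the reverse inclusion, fix $t\in 1+M_F$. The idea is to find one maximal subfield $L$ of $D$ that is tamely ramified over $F$, apply Proposition~\ref{normsurj} to get $u\in 1+M_L$ with $N_{L/F}(u)=t$, and then use $\Nrd_D(u) = N_{L/F}(u) = t$ (Proposition~\ref{normfacts}(ii), with $n/[L:T]=1$ since $L$ is maximal), together with $1+M_L\subseteq 1+M_D$. The only real point is the existence of a maximal subfield $L\subseteq D$ that is tame over $F$ in the sense of Proposition~\ref{normsurj} (residue separable, and degree of residue/ramification data prime to $\chr(\overline F)$). This is where I expect the main obstacle to lie: one must exhibit a maximal subfield whose residue extension is separable over $\overline F$ and whose ramification index carries no wild part. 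Over a henselian field, the tameness of $D$ (equivalently $[\gr(D):\gr(F)]=[D:F]$ and $Z(\gr(D))=\gr(F)$) should guarantee this. Concretely, $\gr(D)$ is a tame graded division algebra over $\gr(F)$, so it has a maximal graded subfield $\mathcal L$ that is separable over $\gr(F)$ (the residue $\mathcal L_0=\overline{\mathcal L}$ separable over $\overline F$ and $|\Gamma_{\mathcal L}:\Gamma_F|$ prime to $\chr\overline F$, since $[\mathcal L:\gr(F)]=\ind(\gr(D))$ divides $[D:F]/(\text{wild part})$). Lifting a primitive (homogeneous) generator of $\mathcal L$ to $D$ via a $\lambda$-polynomial — using Proposition~\ref{lambdafacts}(iv) to get a root $a\in D$ with $\widetilde a$ the chosen generator — produces $L=F(a)\subseteq D$ with $[L:F]=[\mathcal L:\gr(F)]=\ind(D)$, hence $L$ maximal, and $\gr(L)\conggr\mathcal L$ so $L$ is tame over $F$.

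Alternatively, and perhaps more cleanly, one can avoid producing a single good maximal subfield globally and instead argue locally: it suffices to find, for the given $t\in 1+M_F$, \emph{some} tamely ramified subfield $L$ of $D$ with $t\in N_{L/F}(1+M_L)$, and then conclude as above. One can even bypass maximality: if $L\subseteq D$ is any tame subfield with $t\in N_{L/F}(1+M_L)$, choose a maximal subfield $K$ of $D$ containing $L$ that is also tame over $F$ (again via the graded picture), extend the norm computation by $N_{K/F}=N_{L/F}\circ N_{K/L}$ and $N_{K/L}(1+M_K)=1+M_L$ from Proposition~\ref{normsurj}, and use $\Nrd_D(v)=N_{K/F}(v)$ for $v\in K$. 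Either way the structure is: (1) $\subseteq$ is immediate from Corollary~\ref{nrdcor}; (2) for $\supseteq$, transfer the problem to a maximal tame subfield $L$, where $\Nrd_D|_L=N_{L/F}$; (3) invoke Proposition~\ref{normsurj}. The substantive step, and the one I would write out with care, is step (2) — the existence of the maximal tame subfield, which rests on the tameness hypothesis together with Proposition~\ref{lambdafacts}(iv) to lift a separating homogeneous generator from $\gr(D)$ back to $D$.
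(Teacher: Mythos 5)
Your proposal is correct and follows essentially the same route as the paper: the inclusion $\Nrd_D(1+M_D)\subseteq 1+M_F$ is immediate (the paper gets it from the first part of the proof of Prop.~\ref{normsurj}, you from Cor.~\ref{nrdcor} — both fine), and the reverse inclusion comes from a maximal subfield $L$ tamely ramified over $F$ together with $N_{L/F}(1+M_L)=1+M_F$ from Prop.~\ref{normsurj} and $\Nrd_D|_L = N_{L/F}$. The one step you flag as substantive — the existence of such a tame maximal subfield — is not re-proved in the paper but simply quoted from \cite{hwcor}, Prop.~4.3, so your sketch of a construction via graded subfields and $\lambda$-polynomial lifting, while plausible, is more than is needed.
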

\begin{proof} Take any $a\in 1+M_D$ and any maximal subfield $K$ of 
$D$ with $a \in K$. Then, $K$ is defectless over $F$, since $D$ is 
defectless over $F$. So, $a \in 1+M_K$, and 
${\Nrd_D(a)=N_{K/F}(a) \in 1+M_F}$ by the first part of the proof of  
Prop.~\ref{normsurj}, which required only defectlessness, not 
tameness. Thus, $\Nrd_D(1+M_D) \subseteq 1+M_F$. For the reverse 
inclusion, recall from \cite{hwcor}, Prop.~4.3 that as $D$ is tame over $F$, 
it has a maximal 
subfield $L$ with $L$ tamely ramified over $F$. Then by 
Prop.~\ref{normsurj}, 
$$
1+M_F \ = \ N_{L/F}(1+M_L) \ = \ \Nrd_D(1+M_L) \ \subseteq  \ 
\Nrd_D(1+M_D) \ \subseteq  \ 1+M_F,
$$ 
so equality holds throughout.
\end{proof}

We can now prove the main result of this section:
\begin{thm}\label{sk1prop}
Let $F$ be a field with henselian valuation $v$ and let $D$ be a
tame $F$-central division algebra. Then 
$\SK (D) \cong \SK (\mathrm{gr}(D))$.
\end{thm}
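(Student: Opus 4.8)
The plan is to compare $D$ and $\gr(D)$ by passing through the maps constructed in \S\ref{prel}, namely the group epimorphism $\rho\colon D^* \to \gr(D)^*$, $d\mapsto \widetilde d$, with kernel $1+M_D$. First I would check that $\rho$ restricts to a map $D^{(1)} \to \gr(D)^{(1)}$: this is exactly Cor.~\ref{nrdcor}, which gives $\Nrd_{\gr(D)}(\widetilde a) = \widetilde{\Nrd_D(a)}$, so if $\Nrd_D(a)=1$ then $\widetilde{\Nrd_D(a)} = \widetilde 1 = 1$. Next, since $\rho$ is a group homomorphism it carries $D' = [D^*,D^*]$ onto $[\gr(D)^*,\gr(D)^*] = \gr(D)'$ (surjectivity of $\rho$ gives surjectivity on commutator subgroups). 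Hence $\rho$ induces a well-defined homomorphism $\overline\rho\colon \SK(D) = D^{(1)}/D' \to \gr(D)^{(1)}/\gr(D)' = \SK(\gr(D))$. The bulk of the proof is then to show $\overline\rho$ is bijective.

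For surjectivity: given $y \in \gr(D)^{(1)}$, by part (iii) of Prop.~\ref{normfacts} we have $y \in \gr(D)_0 = \overline D$, and in any case $y = \widetilde d$ for some $d\in D^*$ since $\rho$ is onto. Then $\widetilde{\Nrd_D(d)} = \Nrd_{\gr(D)}(y) = 1$, so $\Nrd_D(d) \in 1+M_F$. By Cor.~\ref{Dnormsurj}, $\Nrd_D(1+M_D) = 1+M_F$, so there is $c\in 1+M_D$ with $\Nrd_D(c) = \Nrd_D(d)$; then $dc^{-1} \in D^{(1)}$ and $\widetilde{dc^{-1}} = \widetilde d\,\widetilde c^{\,-1} = y\cdot 1 = y$ because $c\in 1+M_D = \ker\rho$. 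So $\overline\rho$ is surjective. For injectivity: suppose $a\in D^{(1)}$ with $\widetilde a \in \gr(D)'$. Write $\widetilde a = \prod [\widetilde{b_i},\widetilde{c_i}] = \rho\big(\prod[b_i,c_i]\big)$ with $b_i,c_i\in D^*$. Setting $w = \prod[b_i,c_i] \in D'$, we have $\rho(aw^{-1}) = 1$, i.e. $aw^{-1} \in 1+M_D$. Also $\Nrd_D(aw^{-1}) = \Nrd_D(a)\cdot\Nrd_D(w)^{-1} = 1$ since reduced norms of commutators are trivial, so $aw^{-1} \in D^{(1)}\cap(1+M_D)$. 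Thus it suffices to prove that $D^{(1)}\cap(1+M_D) \subseteq D'$, for then $a \equiv w \pmod{D'}$ lies in $D'$.

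The claim $D^{(1)}\cap(1+M_D) \subseteq D'$ is the heart of the matter, and I expect it to be the main obstacle — it is essentially the Congruence Theorem for tame division algebras over henselian fields that the introduction flags as Appendix~\ref{apencongru}. The idea is that for $a\in 1+M_D$ with $\Nrd_D(a) = 1$, one uses Prop.~\ref{normsurj} (norms of principal units are surjective over tamely ramified extensions) together with a reduction to a maximal tamely ramified, hence well-understood, situation, and the Wedderburn factorization Th.~\ref{domainW} applied to the minimal polynomial of $a$, to express $a$ as a product of commutators, working modulo higher and higher powers of $M_D$ and using completeness/henselianity to pass to the limit. Concretely, writing $h_a = \prod(x - d_i a d_i^{-1})$ and comparing with the fact that $h_a$ reduces (as a $0$-polynomial) to $(x-1)^n$ in $\gr(F)[x]$, one deduces $a^n \in D'\cdot(1+M_D^{\,(2)})$-type congruences and bootstraps. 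Once this inclusion is granted, the two displays above complete the proof that $\overline\rho$ is an isomorphism, giving $\SK(D)\cong\SK(\gr(D))$. I would either cite Appendix~\ref{apencongru} for the congruence statement or, if a self-contained argument is wanted here, isolate it as a separate lemma before this theorem.
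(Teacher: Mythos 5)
Your proposal is correct and follows essentially the same route as the paper: the induced map on $\SK$ coming from $\rho\colon D^*\to\gr(D)^*$, surjectivity via Cor.~\ref{nrdcor} and Cor.~\ref{Dnormsurj}, and injectivity reduced to the Congruence Theorem $(1+M_D)\cap D^{(1)}\subseteq D'$, which the paper likewise quotes from Appendix~\ref{apencongru} (your parenthetical guess at how that theorem is proved is not how the appendix actually argues --- it uses a multi-step reduction to the inertially split and strongly tame cases rather than a completeness bootstrap --- but since you only cite it, this does not affect the proof).
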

\begin{proof}
Consider the canonical surjective group homomorphism 
$\rho\colon D^{\ast} \ra
\gr(D)^{\ast}$ 
%% of (\ref{grmap}) 
given by $a \mapsto \ti{a}$. 
Clearly, $\ker(\rho) = 1+M_D$.  If 
$a\in D^{(1)} \subseteq V_D$ then $\ti a \in \gr(D)_0$ and
by Cor.~\ref{nrdcor}, 
$$
\Nrd_{\gr(D)}(\ti a) \ = \ \ti{\Nrd_D(a)} \ = \,1.
$$
This shows that $\rho (D^{(1)}) \subseteq \gr(D)^{(1)}$. Now 
consider the diagram
\begin{equation} \label{diagram1}
\begin{split}
\xymatrix{1 \ar[r] & (1+M_D) \cap D' \ar[r] \ar[d] & D' \ar[d]
\ar[r]^-{\rho} & \gr(D)' \ar[d] \ar[r] & 1 \\
1 \ar[r] & (1+M_D)\cap D^{(1)} \ar[r] & D^{(1)} \ar[r] &
\gr(D)^{(1)} \ar@{.>}[r] & 1}
\end{split}
\end{equation}
The top row of the above diagram  is clearly exact. The Congruence 
Theorem (see Th.~\ref{congru} in Appendix~\ref{apencongru}),
implies that the left
vertical map in the diagram  is an isomorphism. Once we prove
that $\rho(D^{(1)}) = \gr(D)^{(1)}$, we will have the exactness of
the second row of diagram (\ref{diagram1}), and the theorem 
follows by the exact sequence for cokernels.

To prove the needed surjectivity, take any $b \in \gr(D)^{\ast}$
with $\Nrd_{\gr(D)}(b) =1$. Thus $b \in \gr(D)_0$ by Th.
~\ref{normalthm}. Choose $a
\in V_D$ such that $\ti a=b$. Then we have,
$$
\overline{\Nrd_D(a)} \ = \ \ti{\Nrd_D(a)} \ 
= \ \Nrd_{\gr(D)}(b) \ = \, 1.
$$    
Thus $\Nrd_D(a)
\in 1+M_F$. By Cor.~\ref{Dnormsurj}, since $\Nrd_D(1+M_D)=1+M_F$, 
there is $c \in 1+M_D$ such that $\Nrd_D(c) = \Nrd(a)^{-1}$.  Then,
$ac\in D^{(1)}$ and $\rho(ac) =\rho(a) = b$.
\end{proof}

 Recall from \S\ref{prel} that starting from any graded division
algebra $E$ with center $T$ and any choice of total ordering
$\le$ on the torsion-free abelian group $\Gamma_E$, there is an 
induced valuation~$v$ on $q(E)$, see \eqref{lamfun}.   Let $h(T)$
be the henselization of $T$ with respect to $v$, and let 
${h(E) = q(E) \otimes_{q(T)} h(T)}$.  Then, $h(E)$ is a division 
ring by Morandi's henselization theorem (\cite{M}, Th.~2 or see
\cite{wadval}, Th.~ 2.3), and with respect to the unique extension of 
the henselian valuation on $h(T)$ to $h(E)$, $h(E)$ is an immediate 
extension $q(E)$, i.e., $\gr(h(E)) \conggr \gr(q(E))$.  Furthermore,
as
$$
[h(E):h(T)] \ = \ [q(E):q(T)] \ = \ [E:T] \ = \ 
[\gr(q(E)) :\gr(q(T))] \ = \ [\gr(h(E):\gr(h(T))]
$$
and 
$$
Z(\gr(h(E))) \  \conggr \, Z(\gr(q(E)))  \ \conggr \, T \  \conggr
\, \gr(h(T)) \ = \ 
\gr(Z(h(E))), 
$$
$h(E)$ is tame (see the characterizations of tameness in \S\ref{prel}).

\begin{cor}
Let $E$ be a graded division algebra. Then $\SK(h(E)) \cong
\SK(E)$.
\end{cor}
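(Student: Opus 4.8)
The plan is to combine the two main theorems already proved in the paper. The key observation is that $h(E)$ is a tame valued division algebra over the henselian field $h(T)$, as was just verified in the paragraph preceding the statement (the dimension count and the identification $Z(\gr(h(E))) \conggr \gr(Z(h(E)))$ are exactly the characterizations of tameness from \S\ref{prel}). Therefore Theorem~\ref{sk1prop} applies to $h(E)$ and gives $\SK(h(E)) \cong \SK(\gr(h(E)))$.

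Next I would use the fact, noted when $h(E)$ was constructed, that the extension $h(E)/q(E)$ is immediate, so $\gr(h(E)) \conggr \gr(q(E))$; and the fact, recorded at the end of \S\ref{prel} (equation~\eqref{lamfun} and the sentence after it), that $\gr(q(E)) \conggr E$. A graded ring isomorphism induces an isomorphism of reduced Whitehead groups, since the reduced norm is compatible with graded isomorphisms (this follows from Prop.~\ref{normfacts} or simply from the functoriality of $\SK$ for Azumaya algebras). Hence $\SK(\gr(h(E))) \cong \SK(\gr(q(E))) \cong \SK(E)$. Chaining these isomorphisms together yields $\SK(h(E)) \cong \SK(E)$.

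So the proof is essentially a one-line citation: apply Theorem~\ref{sk1prop} to the tame valued division algebra $h(E)$ over $h(T)$, then use $\gr(h(E)) \conggr \gr(q(E)) \conggr E$. I do not anticipate a genuine obstacle here, since all the ingredients — tameness of $h(E)$, the immediateness of $h(E)/q(E)$, and the identification $\gr(q(E)) \conggr E$ — were established in the preceding text. The only point requiring a word of care is the remark that a graded isomorphism $E_1 \conggr E_2$ of graded division algebras induces $\SK(E_1) \cong \SK(E_2)$; this is immediate because such an isomorphism carries $Z(E_1)$ onto $Z(E_2)$, carries $E_1^{(1)}$ onto $E_2^{(1)}$ (reduced norms being preserved, as the reduced characteristic polynomial is determined ring-theoretically after a faithfully flat base change), and carries $E_1'$ onto $E_2'$.

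\begin{proof}
As observed in the paragraph preceding the statement, $h(E)$ is a tame division algebra over the henselian valued field $h(T) = Z(h(E))$. Hence Theorem~\ref{sk1prop} gives $\SK(h(E)) \cong \SK(\gr(h(E)))$. Since $h(E)$ is an immediate extension of $q(E)$, we have $\gr(h(E)) \conggr \gr(q(E))$, and by the remark following \eqref{lamfun} we have $\gr(q(E)) \conggr E$. A graded ring isomorphism between graded division algebras respects centers and reduced norms, hence induces an isomorphism of reduced Whitehead groups. Therefore
\[
\SK(h(E)) \ \cong \ \SK(\gr(h(E))) \ \cong \ \SK(\gr(q(E))) \ \cong \ \SK(E). \qedhere
\]
\end{proof}
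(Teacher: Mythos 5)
Your proof is correct and follows exactly the paper's own argument: apply Theorem~\ref{sk1prop} to the tame valued division algebra $h(E)$ and then use the graded isomorphisms $\gr(h(E)) \conggr \gr(q(E)) \conggr E$ established in the preceding paragraph. The extra remark that a graded isomorphism induces an isomorphism of $\SK$ is a harmless (and correct) elaboration of a step the paper leaves implicit.
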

\begin{proof}
Since $h(E)$ is a tame valued division algebra, 
by Th.~ \ref{sk1prop}, $\SK(h(E)) \cong \SK(\gr(h(E)))$. But 
$\gr(h(E)) \conggr\gr(q(E))\conggr E$, so the corollary follows.
\end{proof}

%% As  is clear from above, computations in the graded setting tend to be 
%% significantly easier  than in the henselian valued setting.
 Having now
established that the reduced Whitehead group of a division algebra 
coincides with that of its associated graded division algebra, 
we can easily  deduce
stability of $\SK$ for unramified valued division algebra, due 
originally to 
Platonov (Cor.~3.13 in \cite{platonov}), and also a formula for $\SK$ for a 
totally ramified division algebra (\cite{lewistig}, p.~363, see 
also \cite{ershov}, p.~70), and also a formula for $\SK$ in the nicely 
semiramfied case (\cite{ershov}, p.~69), as  natural consequences 
of Th.~\ref{sk1prop}:

\begin{cor}\label{sk1appl}
Let $F$ be a field with Henselian valuation, and let $D$ be a 
tame division algebra with center $F$.

\begin{enumerate}
\item [(i)] If $D$ is unramified then $\SK(D) \cong \SK(\overline D)$

\item [(ii)] If $D$ is totally ramified then 
$\SK(D)\cong\mu_n(\overline F)/\mu_e(\overline F)$ 
where $n = \ind(D)$ and $e$ is the exponent of 
$\Gamma_D /\Gamma_F$.

\item [(iii)]  If $D$ is semiramified, let 
$G = \Gal(\ov D/\ov F) \cong \Gamma_D/\Gamma_F$. Then, there is 
an exact sequence
\begin{equation*}
G\wedge G  \ \to  \  \widehat H^{-1}(G,\ov D^*) 
 \ \to \  \SK(D) \  \to \  1.
\end{equation*}

\item [(iv)] If $D$ is nicely semiramfied, then $\SK(D)\cong
\widehat H^{-1}(\Gal(\overline D/\overline F),\overline D^*)$.
%\item If $a \in V_D$, then $\overline \Nrd_D(a)=N_{Z(\overline D)/\overline
%F}\Nrd_{\overline D}(\overline a)^\dlambda$ where $\dlambda=i(D)/i(\overline
%D)[Z(\overline D):\overline F]$.
\end{enumerate}
\end{cor}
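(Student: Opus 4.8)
The plan is to reduce every part to the corresponding statement about the associated graded division algebra, proved in Corollary~\ref{skunramthm}, by means of the isomorphism $\SK(D) \cong \SK(\gr(D))$ of Theorem~\ref{sk1prop}. The only thing needing verification is that the ramification hypotheses on $D$ pass to $\gr(D)$. Since $D$ is tame over the henselian field $F$, we have $Z(\gr(D)) = \gr(F)$ with $\gr(D)_0 = \ov D$, $\gr(F)_0 = \ov F$, $\Gamma_{\gr(D)} = \Gamma_D$, $\Gamma_{\gr(F)} = \Gamma_F$, and $\ind(\gr(D)) = \ind(D)$ (see \S\ref{prel} and \cite{hwcor}, \S4). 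Comparing these with the definitions of \emph{unramified}, \emph{totally ramified} and \emph{semiramified} graded division algebras given in \S\ref{prel}, one sees at once that $D$ is of one of these types over $F$ precisely when $\gr(D)$ is of the same type over $\gr(F)$, and that the associated data correspond: $\Gal(\gr(D)_0/\gr(F)_0) = \Gal(\ov D/\ov F)$, and in the semiramified case $\Gamma_{\gr(D)}/\Gamma_{\gr(F)} = \Gamma_D/\Gamma_F$.

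Granting this, parts (i)--(iii) follow immediately by feeding the data into Corollary~\ref{skunramthm}(i), (ii), (iii) and using Theorem~\ref{sk1prop}: in (i), $\SK(\gr(D)_0) = \SK(\ov D)$; in (ii), $\mu_n(\gr(F)_0) = \mu_n(\ov F)$ and $\mu_e(\gr(F)_0) = \mu_e(\ov F)$ with $e$ the exponent of $\Gamma_D/\Gamma_F = \Gamma_{\gr(D)}/\Gamma_{\gr(F)}$; and in (iii) the wedge exact sequence transports verbatim with $G = \Gal(\ov D/\ov F)$ and $\gr(D)_0 = \ov D$.

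For part (iv) I would recall that a nicely semiramified division algebra $D$ has a maximal subfield $L$ inertial over $F$ and a maximal subfield $K$ totally ramified over $F$ (see \cite{jw}). Both $L/F$ and $K/F$ are defectless, being subextensions of the defectless extension $D/F$, so $[\gr(L):\gr(F)] = [L:F] = \ind(D) = \ind(\gr(D))$ and similarly for $\gr(K)$; hence $\gr(L)$ and $\gr(K)$ are maximal graded subfields of $\gr(D)$. As $\Gamma_L = \Gamma_F$ with $\ov L/\ov F$ separable, $\gr(L)$ is unramified over $\gr(F)$; as $\ov K = \ov F$, $\gr(K)$ is totally ramified over $\gr(F)$. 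So $\gr(D)$ satisfies the hypothesis of Corollary~\ref{skunramthm}(iv), and together with Theorem~\ref{sk1prop} this gives $\SK(D) \cong \SK(\gr(D)) \cong \widehat H^{-1}(\Gal(\ov D/\ov F), \ov D^*)$. The only step calling for a little care in the whole argument is this last one --- checking that inertial and totally ramified \emph{maximal} subfields of $D$ give rise to unramified and totally ramified maximal graded subfields of $\gr(D)$ of the right dimension --- which relies on the defectlessness of tame extensions and the standard dictionary between valued and graded structures from \cite{hwcor}, \S4; everything else is a direct translation through Theorem~\ref{sk1prop}.
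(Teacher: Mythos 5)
Your proposal is correct and follows essentially the same route as the paper: reduce each case to Corollary~\ref{skunramthm} via the isomorphism $\SK(D)\cong\SK(\gr(D))$ of Theorem~\ref{sk1prop}, after checking that the ramification type of $D$ transfers to $\gr(D)$, with case (iv) handled by passing the unramified and totally ramified maximal subfields to maximal graded subfields by a dimension count (the paper additionally cites the graded double centralizer theorem for maximality, but the substance is the same).
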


\begin{proof}  Because $D$ is tame, $Z(\gr(D)) = \gr(F)$
and $\ind(\gr(D)) = \ind(D)$.  Therefore, for $D$ in each case 
(i)--(iv) here, $\gr(D)$ is in the corresponding case of 
Cor.~\ref{skunramthm}.  (In case (iii), that $D$ is semiramified 
means  $[\ov D:\ov F] = |\Gamma_D:\Gamma_F| = \ind(D)$
and $\ov D$ is a field.  Hence $\gr(D) $ is semiramified.  
In case (iv),  since  
$D$ is nicely semiramified, by definition (see \cite{jw}, p.~149)
it contains maximal subfields $K$ and 
$L$, with $K$ unramified over $F$ and $L$ totally ramified
over~$F$.  (In fact, by \cite{mou}, Th.~2.4, $D$ is nicely 
semiramified if and only if it has such maximal subfields.) 
Then, $\gr(K)$ and $\gr(L)$ are maximal graded subfields
of $\gr(D)$ by dimension count and the graded double centralizer 
theorem,\cite{hwcor}, Prop.~1.5(b),
with $\gr(K)$~unramified over~$\gr(F)$ and $\gr(L)$ 
totally ramified over $\gr(F)$. So, $\gr(D)$ is  then in case~(iv)
of Cor.~\ref{skunramthm}.)
Thus, in each case Cor.~\ref{sk1appl} for $D$ follows from 
Cor.~\ref{skunramthm} for $\gr(D)$ together with the isomorphism 
$\SK(D) \cong \SK(\gr(D))$ given by Th.~\ref{sk1prop}.
\end{proof}

Recall that the reduced norm residue group of $D$ is defined as
$\SH(D)=F^*/\Nrd_D(D^*)$. It is known that $\SH(D)$ coincides with
the first Galois cohomology group $H^1(F,D^{(1)})$ (see
\cite{kmrt}, \S29). We now show that for a tame
division algebra $D$ over a henselian field, $\SH(D)$ coincides with $\SH$ of its
associated graded division algebra.

\begin{thm}\label{shthm}
Let $F$ be a field with a henselian valuation $v$ and let $D$ be a
tame $F$-central division algebra. Then 
$\SH (D) \cong \SH (\mathrm{gr}(D))$.
\end{thm}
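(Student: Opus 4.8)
\textbf{Proof proposal for Theorem~\ref{shthm}.}
The plan is to mimic the structure of the proof of Theorem~\ref{sk1prop}, but now tracking cokernels of the reduced norm maps rather than kernels. Recall that $\SH(D) = F^*/\Nrd_D(D^*)$ and $\SH(\gr(D)) = \gr(F)^*/\Nrd_{\gr(D)}(\gr(D)^*)$. The canonical surjection $\rho\colon D^* \to \gr(D)^*$, $a \mapsto \ti a$, with kernel $1+M_D$, restricts via Cor.~\ref{nrdcor} to a homomorphism on reduced norm groups: for $a \in D^*$ we have $\Nrd_{\gr(D)}(\ti a) = \ti{\Nrd_D(a)} = \rho_F(\Nrd_D(a))$, where $\rho_F\colon F^* \to \gr(F)^*$ is the analogous surjection with kernel $1+M_F$. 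Thus $\rho$ and $\rho_F$ induce a commutative square relating $\Nrd_D(D^*) \subseteq F^*$ to $\Nrd_{\gr(D)}(\gr(D)^*) \subseteq \gr(F)^*$, and the first step is to assemble this into the diagram
\begin{equation*}
\begin{split}
\xymatrix{
1 \ar[r] & (1+M_F)\cap \Nrd_D(D^*) \ar[r] \ar[d] & \Nrd_D(D^*) \ar[r]^-{\rho_F} \ar[d] & \Nrd_{\gr(D)}(\gr(D)^*) \ar[r] \ar[d] & 1\\
1 \ar[r] & 1+M_F \ar[r] & F^* \ar[r]^-{\rho_F} & \gr(F)^* \ar[r] & 1}
\end{split}
\end{equation*}
and then pass to the cokernels of the vertical maps, which give $1 \to \text{(something)} \to \SH(D) \to \SH(\gr(D)) \to 1$ by the snake lemma.

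The second and crucial step is to show the leftmost vertical map is an \emph{isomorphism}, i.e.\ $(1+M_F) \cap \Nrd_D(D^*) = 1+M_F$, equivalently $1+M_F \subseteq \Nrd_D(D^*)$. But this is exactly Cor.~\ref{Dnormsurj}: since $D$ is tame over the henselian field $F$, we have $\Nrd_D(1+M_D) = 1+M_F$, so in particular every element of $1+M_F$ is a reduced norm. Hence the left vertical map is onto, and it is trivially injective, so it is an isomorphism; its cokernel is trivial. I also need that the top row is exact, i.e.\ that $\rho_F$ maps $\Nrd_D(D^*)$ \emph{onto} $\Nrd_{\gr(D)}(\gr(D)^*)$ with the indicated kernel. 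Surjectivity: given $b \in \gr(D)^*$, lift it to $a \in D^*$ with $\ti a = b$ (possible after adjusting by a unit so that $b \in \gr(D)^h$, which it already is); then by Cor.~\ref{nrdcor}, $\rho_F(\Nrd_D(a)) = \ti{\Nrd_D(a)} = \Nrd_{\gr(D)}(b)$, so $\Nrd_{\gr(D)}(b)$ is in the image. Exactness in the middle: the kernel of $\rho_F$ restricted to $\Nrd_D(D^*)$ is $(1+M_F)\cap\Nrd_D(D^*) = 1+M_F$, which is precisely the displayed kernel. Once both rows are exact and the left vertical map is an isomorphism, the cokernel exact sequence collapses to $\SH(D) \xrightarrow{\sim} \SH(\gr(D))$.

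The only genuine subtlety—and the place I'd expect to spend a sentence or two of care—is making sure the diagram is set up so that the snake lemma applies cleanly: all three vertical maps must be injective (or one should instead phrase things directly in terms of quotients). The cleanest route is probably to avoid the snake lemma entirely and argue by hand: the induced map $\bar\rho_F\colon \SH(D) = F^*/\Nrd_D(D^*) \to \gr(F)^*/\Nrd_{\gr(D)}(\gr(D)^*) = \SH(\gr(D))$ is well-defined because $\rho_F(\Nrd_D(D^*)) \subseteq \Nrd_{\gr(D)}(\gr(D)^*)$ (from Cor.~\ref{nrdcor}); it is surjective because $\rho_F\colon F^* \to \gr(F)^*$ is surjective and, by the surjectivity established above, $\Nrd_{\gr(D)}(\gr(D)^*) = \rho_F(\Nrd_D(D^*))$; and it is injective because if $x \in F^*$ with $\rho_F(x) \in \Nrd_{\gr(D)}(\gr(D)^*) = \rho_F(\Nrd_D(D^*))$, then $x = n\cdot u$ with $n \in \Nrd_D(D^*)$ and $u \in \ker(\rho_F) = 1+M_F \subseteq \Nrd_D(D^*)$ by Cor.~\ref{Dnormsurj}, whence $x \in \Nrd_D(D^*)$. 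This three-line verification (well-defined, onto, one-to-one) is really the whole proof, and Cor.~\ref{Dnormsurj} is the one nontrivial input; everything else is formal.
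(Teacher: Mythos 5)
Your proposal is correct and is essentially the paper's own argument: the paper forms the commutative diagram with exact rows $1\to 1+M_D\to D^*\to \gr(D)^*\to 1$ and $1\to 1+M_F\to F^*\to \gr(F)^*\to 1$, with the reduced norms as vertical maps (commutativity from Cor.~\ref{nrdcor}), notes that the left vertical map is surjective by Cor.~\ref{Dnormsurj}, and concludes by the exact sequence for cokernels. Your hand-unwound verification (well-defined, onto, one-to-one) is just that same cokernel sequence made explicit, resting on exactly the same two inputs.
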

\begin{proof}
Consider the diagram with exact rows,
\begin{equation} \label{diagram}
\begin{split}
\xymatrix{1 \ar[r] & 1+M_D  \ar[r] \ar[d] & D^* \ar[d]^{\Nrd_D}
\ar[r]^-{\rho} & \gr(D)^* \ar[d]^{\Nrd_{\gr(D)}} \ar[r] & 1 \\
1 \ar[r] & 1+M_F \ar[r] & F^* \ar[r] & \gr(F)^* \ar[r] & 1}
\end{split}
\end{equation}
where Cor. ~\ref{nrdcor} guarantees that the diagram is commutative.
By Cor.~\ref{Dnormsurj},  the left vertical map is 
an epimorphism.
The theorem follows by the exact sequence for cokernels.
\end{proof}

\begin{remark}As with $\SK$, if $D$ is tame and unramified, then 
$$
\SH(D) \ \cong \ 
\SH(\gr(D))  \ \cong \  \SH(\gr(D)_0)  \ \cong  \ \SH (\overline D).
$$
\end{remark}
  
We conclude this section by establishing a similar result for the 
$\CK$ functor of \eqref{ckdef} above. Note that here, unlike the situation with $\SK$ 
(Th. ~\ref{sk1prop}) or with $\SH$ (Th.~\ref{shthm}), we need to assume  
strong tameness here. 

\begin{thm} \label{ckunramtramthm}
Let $F$ be a field with henselian valuation $v$ and let $D$ be a
strongly tame $F$-central division algebra. Then 
$\CK(D) \cong \CK(\mathrm{gr}(D))$. 
\end{thm}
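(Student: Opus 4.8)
The plan is to follow the pattern of the proofs of Th.~\ref{sk1prop} and Th.~\ref{shthm}, using the canonical epimorphism $\rho\colon D^*\to\gr(D)^*$, $a\mapsto\ti a$, with $\ker(\rho)=1+M_D$. Since $\rho$ is surjective it carries commutators to commutators, so $\rho(D')=\gr(D)'$; also every homogeneous unit of $\gr(F)$ is $\ti c$ for some $c\in F^*$, so $\rho(F^*)=\gr(F)^*$, and hence $\rho(F^*D')=\gr(F)^*\gr(D)'$. Therefore $\rho$ induces a surjective homomorphism
\begin{equation*}
\bar\rho\colon \CK(D)=D^*\big/(F^*D') \ \lra \ \gr(D)^*\big/(\gr(F)^*\gr(D)')=\CK(\gr(D)),
\end{equation*}
whose kernel is $\rho^{-1}\!\big(\gr(F)^*\gr(D)'\big)\big/(F^*D')=\big((F^*D')(1+M_D)\big)\big/(F^*D')\cong (1+M_D)\big/\big((1+M_D)\cap(F^*D')\big)$. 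Thus the whole theorem reduces to showing
\begin{equation*}
1+M_D \ \subseteq \ F^*D'.
\end{equation*}

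To prove this inclusion --- the crux of the argument, and the only place strong tameness is used --- I would argue in two steps. Put $n=\ind(D)$; strong tameness means $\chr(\ov F)\nmid n$. \emph{Step 1: every $u\in 1+M_D$ is an $n$-th power of an element of $1+M_D$.} Choose a maximal subfield $K$ of $D$ with $u\in K$ (extend $F(u)$ to a maximal subfield). The unique extension of the henselian valuation $v$ to $K$ is again henselian, $u\in 1+M_K\subseteq V_K$, and the reduction of $x^n-u\in V_K[x]$ is $x^n-1\in\ov K[x]$, which is separable with $1$ as a simple root because $\chr(\ov K)=\chr(\ov F)\nmid n$. By Hensel's Lemma $x^n-u$ has a root $w\in V_K$ with $\ti w=1$, i.e.\ $w\in 1+M_K\subseteq 1+M_D$ and $w^n=u$. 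Hence $1+M_D\subseteq (1+M_D)^n$. \emph{Step 2: $D^{*n}\subseteq F^*D'$}, i.e.\ $\CK(D)$ has exponent dividing $n$ (the standard fact recalled above; see \cite{sk12001}). Indeed, for $w\in D^*$ the Wedderburn Factorization Theorem gives $h_w=\prod_{i=1}^m(x-d_iwd_i^{-1})$ with $d_i\in D^*$ and $m=\deg h_w\mid n$, so $\Nrd_D(w)=\big(\prod_i d_iwd_i^{-1}\big)^{n/m}\equiv w^n\pmod{D'}$ and $\Nrd_D(w)\in F^*$. Combining, $1+M_D\subseteq(1+M_D)^n\subseteq D^{*n}\subseteq F^*D'$, so $\bar\rho$ is injective and $\CK(D)\cong\CK(\gr(D))$.

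The main obstacle is precisely Step 1: when $\chr(\ov F)=p$ divides $n$, the reduction $x^n-1$ need no longer be separable, $u$ need not have an $n$-th root in $1+M_D$, and in fact the inclusion $1+M_D\subseteq F^*D'$ and the conclusion of the theorem can fail; this explains why strong tameness, rather than the ordinary tameness sufficient for $\SK$ (Th.~\ref{sk1prop}) and $\SH$ (Th.~\ref{shthm}), is needed here. One should still double-check the routine points: that $\rho(F^*)=\gr(F)^*$ and $\rho(F^*D')=\gr(F)^*\gr(D)'$ (so that $\bar\rho$ is well defined and surjective with the kernel stated above), that the restriction of $v$ to the maximal subfield $K$ is henselian, and the congruence $w^n\equiv\Nrd_D(w)\bmod D'$ --- all of which are either standard or already carried out verbatim in the proof of Prop.~\ref{normalthm}.
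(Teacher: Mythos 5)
Your proposal is correct and follows the paper's proof exactly in its overall structure: both reduce, via the canonical epimorphism $\rho\colon D^*\to\gr(D)^*$ with kernel $1+M_D$, to the single inclusion $1+M_D\subseteq F^*D'$. The only difference is that the paper obtains this inclusion by citing Lemma~2.1 of \cite{haz} (which gives the stronger equality $1+M_D=(1+M_F)[D^*,1+M_D]$ for strongly tame $D$), whereas you prove it directly by combining Hensel's lemma (writing each element of $1+M_D$ as an $n$-th power in $1+M_D$, which is where strong tameness enters) with the Wedderburn-factorization congruence $w^n\equiv\Nrd_D(w)\pmod{D'}$ --- a self-contained argument that closely parallels Step~2 of the paper's own proof of the Congruence Theorem in Appendix~\ref{apencongru}.
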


\begin{proof} Consider the canonical epimorphism 
$\rho\colon D^* \rightarrow {\gr(D)^*}$ 
given by $a\mapsto \widetilde a$, with kernel $1+M_D$. 
Since $\rho$ maps $D'$ onto $\gr(D)'$ and $F^*$ onto
$\gr(F)^*$, it induces an isomorphism 
 $D^*\big/\big(F^*D'(1+M_D) \big)\cong 
\gr(D)^*\big/\big(\gr(F)^*\gr(D)'\big)$. We have
$\gr(F) = Z(\gr(D))$ and by
Lemma~2.1 in \cite{haz}, as $D$ is strongly tame,  
$1+M_D=(1+M_F)[D^*,1+M_D] \subseteq F^* D'
$.  Thus,
$\CK(D) \cong \CK(\gr(D))$.
\end{proof}

\section{Stability of the reduced Whitehead group} \label{skqd}

The goal of this section is to prove that if $E$ is a graded division
ring (with $\Ga_E$ a torsion-free abelian group), then $\SK(E) \cong
\SK(q(E))$, where $q(E)$ is the quotient division ring of $E$. When
$\Ga_E \cong \mathbb{Z}$, this was essentially proved by Platonov
and {Yanchevski\u\i} in \cite{py}, Th.~1 (see the Introduction). Their argument was based on properties of twisted
polynomial rings, and our argument is based on their approach. So,
we will first look at twisted polynomial rings. For these, an
excellent reference is Ch.~1 in \cite{j}.

%% \paragraph{\it {Twisted polynomial rings.}}

 Let $D$ be a division ring
finite dimensional over its center $Z(D)$. Let $\si$ be an
automorphism of $D$ whose restriction to $Z(D)$ has finite order,
say $\ell$. Let $T=D[x,\si]$ be the twisted polynomial ring, with
multiplication given by $xd= \si(d) x$, for all $d \in D$. By Skolem-Noether,
there is $w \in D^{\ast}$ with $\si^{\ell} = \intt (w^{-1})$ (=
conjugation by $w^{-1}$); moreover,  $w$ can be chosen so that
$\si(w) = w$ (by a Hilbert 90 argument, see 
\cite{j}, Th.~1.1.22(iii) or \cite{py}, Lemma~1).
Then $Z(T) = K[y]$ (a commutative polynomial ring), where $K=
Z(D)^{\si}$, the fixed field of $Z(D)$ under the action of $\si$,
 and
$y=wx^{\ell}$. Let $Q=q(T)=D(x, \si)$, the division ring of
quotients of $T$. Note that $Z(Q)=q(Z(T)) =K(y)$, and $\ind(Q) =
\ell \ind(D)$. Observe that within $Q$ we have the twisted Laurent
polynomial ring $T[x^{-1}]= D[x, x^{-1}, \si]$ which is a graded
division ring, graded by degree in $x$, and $T \subseteq T[x^{-1}]
\subseteq q(T)$, so that $q(T[x^{-1}]) = Q$. Recall that, since we
have left and right division algorithms for $T$, $T$ is a principal
left (and right) ideal domain.

Let $\mathcal{S}$ denote the set of isomorphism classes $[S]$ of simple
left $T$-modules $S$, and set 
$$
\Div(T) \ = \  \textstyle\bigoplus\limits_{[S] \in \mas}
\mathbb{Z}[S],
$$
the free abelian group with base $\mas$. For any
$T$-module $M$ satisfying both ACC and DCC, the Jordan-H\"older
Theorem yields a well-defined element $jh(M) \in \Div(T)$, given by
$$
jh(M)  \, = \ \textstyle \sum\limits_{[S] \in \mas} n_{[S]} (M)[S],
$$
 where $n_{[S]}
(M)$ is the number of appearances of simple factor modules isomorphic to
$S$ in any composition series of $M$.
Note that for any $f \in T\mi \{0\}$, the division algorithm shows
that ${\dim_D(T / Tf) =\deg(f) < \infty}$. Hence, $T/Tf$ has ACC and
DCC as a $T$-module. Therefore, we can define a divisor function
$$
 \de\colon T\setminus \{0\} \ra \Div(T),  \ \ \textrm{ given by } 
 \ \ \de(f) \, = \, jh(T/Tf).
$$

\begin{rem}\label{deremarks} Note the following properties of $\de$:
\begin{enumerate}
\item [(i)] For any $f,g \in T\setminus \{0\}$, $\de(fg) = \de(f) +\de(g).$ This
follows from the isomorphism $Tg/ Tfg \cong T/ Tf$ (as $T$ has no
zero divisors).

\item [(ii)] We can extend $\de$ to a map 
$\de\colon Q^{\ast}\ra \Div(T)$, 
where $Q=q(T)$,  by $\de (fh^{-1}) = \de(f) -
\de(h)$ for any $f \in T\setminus \{0\}$, $h \in Z(T) \setminus \{0\}$. It follows
from (i) that $\de$ is well-defined and is a group homomorphism on
$Q^{\ast}$. Clearly, $\delta$ is surjective, as every simple
$T$-module is cyclic.

\item [(iii)] For all $q, s \in Q^{\ast}$, $\de(sqs^{-1}) = \de(q)$. This is
clear, as $\de$ is a homomorphism into an abelian group.

\item [(iv)] \label{note4} For all $q \in Q^{\ast}$, $\de(\Nrd_Q(q))= n \,
\de(q)$, where
$n= \ind(Q)$. This follows from (iii), since 
Wedderburn's factorization theorem applied to the minimal
polynomial of $q$ over $Z(Q)$ shows    
that $\Nrd_Q(q) = \prod_{i=1}^{n} s_iq
{s_i}^{-1}$ for some $s_i \in Q^{\ast}$.

\item [(v)] If $\Nrd_Q(q)= 1$, then $\de(q) =0$. This is immediate from
(iv), as $\Div(T)$ is torsion-free.
\end{enumerate}
\end{rem} 

\begin{lemma} \label{fs=tg}
Take any $f,g \in T\mi\{0\}$ with $T/Tf \cong T/Tg$, so $\deg(f) =
\deg(g)$. If $\deg(f) \geq 1$, there exist $s, t \in T
\setminus\{0\}$ with
$\deg(s) = \deg(t) < \deg(f)$ such that $fs=tg$.
\end{lemma}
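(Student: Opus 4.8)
The key is to exploit the isomorphism $\varphi\colon T/Tf \xrightarrow{\sim} T/Tg$ as an isomorphism of left $T$-modules, and to use that $T$ is a principal ideal domain with a division algorithm. First I would let $\varphi(1 + Tf) = u + Tg$ for some $u \in T$; since $\varphi$ is surjective, $u + Tg$ generates $T/Tg$, hence $\gcd_r(u,g) = 1$ (no common right factor), equivalently $Tu + Tg = T$. Since $\varphi$ is well-defined, $fu \in Tg$, say $fu = tg$ for some $t \in T$. The plan is to show that, after replacing $u$ by $u - qf$ for a suitable $q \in T$ (which does not change $\varphi$, since $u \equiv u - qf \pmod{Tf}$ is false — so instead I adjust on the other side), one can arrange $\deg(u) < \deg(f)$; then set $s = u$. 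Concretely: by the right division algorithm write $u = qf + s$ with $s = 0$ or $\deg(s) < \deg(f)$. Then $s + Tf = u + Tf$, but I actually need $s$ such that $fs = tg$; note $fs = f(u - qf) = tg - fqf$, which is not obviously in $Tg$. So the cleaner route is: apply the \emph{left} division algorithm is not available for factoring $f$ out, so instead I would argue directly that the original $u$ can be taken of degree $<\deg(f)$ because $\varphi$ being an isomorphism forces $u \notin Tg$-trivial cases, and then replace $u$ by its remainder mod... — let me restate this properly below.

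The correct approach: let $\varphi(1+Tf) = s + Tg$. Replacing $s$ by its right-remainder after dividing by $g$ — i.e. $s = q_1 g + s'$ with $\deg(s') < \deg(g) = \deg(f)$ — gives $s' + Tg = s + Tg$, so WLOG $\deg(s) < \deg(f)$ (and $s \neq 0$, else $\varphi$ would not be injective, using $\deg f \geq 1$). Injectivity of $\varphi$ gives $\operatorname{ann}_T(s + Tg)$-considerations: the kernel of the map $T \to T/Tg$, $r \mapsto rs + Tg$, must equal $Tf$ (since $\varphi$ induces $T/Tf \cong T(s+Tg) = T/Tg$). In particular $fs \in Tg$, so $fs = tg$ for a unique $t \in T$, and comparing degrees, $\deg(t) = \deg(f) + \deg(s) - \deg(g) = \deg(s) < \deg(f)$. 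Finally $t \neq 0$ because $fs \neq 0$ (as $T$ is a domain and $f, s \neq 0$). This yields $s, t$ with the required degree bounds and $fs = tg$.

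The main obstacle — and the step I would be most careful about — is justifying that $fs \in Tg$ and that the kernel is exactly $Tf$ rather than a larger left ideal: this is precisely where the isomorphism $\varphi$ (not merely an epimorphism) is used, together with the fact that $T$ is a principal left ideal domain so that the kernel is $Th$ for some $h$ with $Th \supseteq Tf$, forcing $h$ to be a right divisor of $f$; then the equality of $D$-dimensions $\dim_D(T/Tf) = \deg(f) = \deg(g) = \dim_D(T/Tg) = \dim_D(T/Th)$ forces $\deg(h) = \deg(f)$ and hence $Th = Tf$. The degree count $\dim_D(T/Tf) = \deg(f)$ quoted in the text before the lemma is exactly what makes this bookkeeping work, so I would invoke it explicitly.
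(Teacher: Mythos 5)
Your final argument is correct and is essentially the paper's proof: evaluate the isomorphism at $1+Tf$, reduce the representative $s$ modulo $Tg$ by the division algorithm, deduce $fs\in Tg$ from $f\cdot\varphi(1+Tf)=\varphi(f+Tf)=0$, and compare degrees. (The excursion about the kernel of $r\mapsto rs+Tg$ being exactly $Tf$ is unnecessary --- only the containment $Tf\subseteq\ker$, i.e.\ the homomorphism property, is needed for $fs\in Tg$; injectivity is used only to guarantee $s\neq 0$.)
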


\begin{proof} (cf.~\cite{j}, Prop.~1.2.8) We have $\deg(f)=
\dim_D(T/Tf) = \dim_D(T/Tg) = \deg(g)$. Let $\al\colon T/Tf \ra T/Tg$ be
a $T$-module isomorphism, and let $\al(1+Tf)= s +Tg$. By the
division algorithm, $s$ can be chosen with $\deg(s)<\deg(g)$. We
have
\begin{equation*}
fs+Tg  \ = \  f(s+Tg)  \ = \  f \al(1+Tf)   \ 
= \   \al (f+ Tf)  \ = \  \al(0)  \ 
= \  0 \mathrm{\;\;\;\; in \;\;} T/Tg.
\end{equation*}
Hence, $fs=tg$ for some $t \in T$. Since $\deg(f) = \deg(g)$, we
have 
\[ 
\deg(t)  \, = \,  \deg(s)  \, < \,  \deg(g)  \, = \,  \deg(f).\qedhere 
\]
\end{proof}

\begin{prop} \label{kerde}  Consider the group homomorphism
$\de\colon Q^* \rightarrow \Div(T)$ defined in 
Remark~\ref{deremarks}$($ii$)$ above. Then  $\ker(\de) = D^*Q'$.
\end{prop}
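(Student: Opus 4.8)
The plan is to prove the two inclusions $D^*Q' \subseteq \ker(\de)$ and $\ker(\de) \subseteq D^*Q'$ separately. The first inclusion is the easy direction: $\de$ is a homomorphism into an abelian group, so $Q' \subseteq \ker(\de)$ automatically, and for $d \in D^*$ we have $T/Td \cong D$ (since $d$ is a unit, $Td = T$ when $d \in D^*$ — wait, more carefully, $d$ is a unit of $Q$ but as an element of $T$ of degree $0$ it is a unit of $T$ as well, so $Td = T$ and $\de(d) = jh(T/T) = 0$). Hence $D^*Q' \subseteq \ker(\de)$.

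The substance is the reverse inclusion. Given $q \in \ker(\de)$, write $q = fh^{-1}$ with $f \in T \setminus\{0\}$ and $h \in Z(T)\setminus\{0\}$; since $\de(h) = jh(T/Th)$ and $h$ is central, and $\de(q) = 0$ forces $\de(f) = \de(h)$. So it suffices to treat the case $q = f \in T\setminus\{0\}$ with $\de(f) = \de(h)$ for a central $h$, i.e. $T/Tf \cong T/Th$ as $T$-modules. The idea is to induct on $\deg(f)$. If $\deg(f) = 0$ then $f \in D^*$ and we are done. If $\deg(f) \geq 1$, apply Lemma~\ref{fs=tg} to the isomorphism $T/Tf \cong T/Th$: there exist $s,t \in T\setminus\{0\}$ with $\deg(s) = \deg(t) < \deg(f)$ and $fs = th$. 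Then in $Q^*$ we have $f = th s^{-1}$, so modulo $Q'$, $f \equiv t h s^{-1}$; since $h$ is central, $f \equiv (ts^{-1}) h \pmod{Q'}$, wait — I need $h$ to be a reduced norm or at least in $D^*Q'$ to conclude. The point is rather: $h$ central of positive degree need not lie in $D^*Q'$. Let me reconsider: the correct target is to show $f \in D^*Q'$ where we may also need that $h \in D^*Q'$ — but $h = wx^\ell \cdot(\text{stuff})$ type elements. Actually the cleanest route is: since $\de(f) = \de(h)$ with $h$ central, and we want $q = fh^{-1} \in D^*Q'$, apply Lemma~\ref{fs=tg} directly to get $fs = th$ with $\deg s = \deg t < \deg f$; then $q = fh^{-1} = t s^{-1}$ in $Q^*$ and $\de(t) = \de(f) + \de(s) - \de(h) = \de(s)$, so $ts^{-1} \in \ker(\de)$ with the numerator $t$ of strictly smaller degree. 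By induction on the minimal degree of a numerator representing the coset, we reduce to $q$ representable as $f_0 h_0^{-1}$ with $\deg f_0 = 0$, i.e. $f_0 \in D^*$ and $h_0 \in Z(T)$, modulo $Q'$; since $h_0$ is central in $Q$ it lies in $Z(Q)^* \subseteq D^*$... no, $Z(Q) = K(y) \not\subseteq D^*$.

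Here is the fix, which is the genuinely delicate step: one must show that any \emph{central} element of $T$ of positive degree, e.g. $y = wx^\ell$ up to units and more generally any nonzero $h \in Z(T) = K[y]$, lies in $D^* Q'$. For $y$ itself: $y = wx^\ell$, and $w \in D^*$, so it suffices to show $x^\ell \in D^* Q'$, equivalently $x \in D^*Q'$ up to the observation that $x^\ell = (x^{\ell})$; actually one shows $x^n \in D^*Q'$ for suitable $n$ using that $x$ normalizes $D$ (so $[x, d] = \si(d)d^{-1} \in Q'$ relates $x$-conjugation to commutators) — more precisely $\Nrd_Q(x)$ can be computed and equals (up to sign and a unit in $K[y]$) a power of $y$, and $\Nrd_Q(x) \in Z(Q)$; combining with Remark~\ref{deremarks}(iv) and an explicit factorization one pushes $y$ into $D^*Q'$. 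The general $h \in K[y]\setminus\{0\}$ is handled by factoring $h$ over $Z(Q) = K(y)$ and noting each linear factor $y - a$ differs from $y$ by an element of $K \subseteq D^*$, together with the fact that scalars from $Z(Q)$ are reduced norms hence in $Q'$-cosets of $D^*$ via $\Nrd_Q(Z(Q)^*) $ analysis — or more simply, any element of $Z(Q)^*$ is a product of a reduced norm and something, but cleanest: $Z(Q)^* \subseteq D^*Q'$ because for central $z$, $z^{n} = \Nrd_Q(z) \in \Nrd_Q(Q^*)$ and one uses divisibility/that $\Div(T)$ is torsion-free together with the already-established fact for $y$. I expect \textbf{this last point — showing central elements of $T$ of positive $y$-degree lie in $D^*Q'$ — to be the main obstacle}; once it is in hand, the induction via Lemma~\ref{fs=tg} closes the argument cleanly.
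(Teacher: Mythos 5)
Your easy inclusion is fine, but the reduction step has two genuine problems.

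First, you pass from $\de(f)=\de(h)$ to ``$T/Tf\cong T/Th$ as $T$-modules'' and then invoke Lemma~\ref{fs=tg}. Equality of the Jordan--H\"older elements $jh(T/Tf)=jh(T/Th)$ only says that the two modules have the same simple composition factors with the same multiplicities; it does not give a module isomorphism, which is what Lemma~\ref{fs=tg} requires as a hypothesis. The paper bridges exactly this gap by factoring into irreducibles: write $f=pf_1$ with $p$ irreducible and the denominator as $q_1\cdots q_k$ with each $q_i$ irreducible; then the class $[T/Tp]$ occurs in both Jordan--H\"older elements, so $T/Tp\cong T/Tq_j$ for some $j$ (these are simple modules, so lying in the same class \emph{is} an isomorphism), and Lemma~\ref{fs=tg} is applied only to the irreducible factors $p$ and $q_j$.

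Second, and more seriously, the ``fix'' you single out as the main obstacle --- showing that central elements of $T$ of positive degree lie in $D^*Q'$ --- is false, not merely delicate. For instance $\de(y)=jh(T/Ty)\neq 0$ since $T/Ty\cong T/Tx^{\ell}\neq 0$, and you have already established $D^*Q'\subseteq\ker(\de)$; hence $y\notin D^*Q'$. You are driven to this impossible claim by insisting that the denominator be central. The paper avoids the issue entirely: it writes an arbitrary element of $\ker(\de)$ as $fg^{-1}$ with $f,g\in T\setminus\{0\}$ and $g$ \emph{not} required to be central, matches one irreducible factor $p$ of $f$ with one irreducible factor $q$ of $g$ as above, and uses the relation $ps=tq$ together with the freedom to commute factors modulo $Q'$ to replace $fg^{-1}$ by $(f_1t)(g_1g_2s)^{-1}$, lowering the degrees of numerator and denominator simultaneously. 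Iterating this lands in $D^*$ modulo $Q'$ without ever placing a central element of positive degree in $D^*Q'$.
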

\begin{proof} (cf.~\cite{py}, proof of Lemma~5) Clearly, $D^{\ast}
\subseteq \ker(\de)$ and $Q' \subseteq \ker(\de)$, so $D^*Q' \subseteq \ker(\de)$. For the reverse inclusion
take $h \in \ker(\de)$ and write $h = f/g$ with $f,g \in T
\setminus\{0\}$.
Since $\de(f/g)=0$, we have $\de(f) = \de(g)$, so $\deg(f) =
\deg(g)$. If $\deg(f)=0$, then $h \in D^{\ast}$, and we're done. So,
assume $\deg(f) >1$. Write $f= pf_1$ with $p$ irreducible. Then,
$T/Tp$ is one of the simple composition factors of $T/Tf$. If 
$g=q_1q_2 \ldots q_k$ with each $q_i$ irreducible, then the composition
factors of $T/Tg$ are (up to isomorphism) $T/Tq_1, \ldots, T/Tq_k$.
Because $\de(f) = \de(g)$, i.e. $jh(T/Tf)= jh(T/Tg)$, we must have
$T/Tp \cong T/Tq_j$ for some $j$. Write $g= g_1q
g_2$ where $q= q_j$. By Lemma ~\ref{fs=tg}, there exist $s,t \in
T\setminus
\{0\}$ with $\deg(s) = \deg(t) < \deg(p) = \deg(q)$ and $ps=tq$.
Then, working modulo $Q'$, we have
\begin{equation*}
 h  \, =  \, fg^{-1} \, = \  (pf_1)(g_1 q g_2)^{-1}  \, \equiv \
f_1(pq^{-1})(g_1g_2)^{-1} \, 
  \equiv \ f_1(ts^{-1})(g_1g_2)^{-1}  \ \equiv \ 
(f_1t)(g_1g_2s)^{-1}.
\end{equation*}

Let $h'=(f_1t)(g_1g_2s)^{-1}$. Since $h' \equiv h \ (\text{mod }  Q')$, we have
$\de(h') = \de(h) = 0$, while ${\deg(f_1t) < \deg(f)}$. By iterating
this process we can repeatedly lower the degree of numerator and
denominator to obtain $h'' \in D^{\ast}$ with $h'' \equiv h' \equiv
h \ (\text{mod }Q')$. Hence, $h \in D^*Q'$, as
desired.  \end{proof}

\begin{remark}  Since $K_1(Q) = Q^*/Q'$, 
Prop.~\ref{kerde} can be stated as saying that there is an
exact sequence
\begin{equation}\label{locseq}
\K(D) \longrightarrow \K(Q) \stackrel{\de}{\longrightarrow} \Div(T)
\longrightarrow 0.
\end{equation}
This can be viewed as part of an exact localization sequence in
$K$-Theory. We prefer the explicit description of $\Div(T)$ and
$\de$ given here, as it helps to understand the maps associated with
$\Div(T)$.
\end{remark}

Let $R = Z(T) = K[y] $. So, $q(R) = Z(Q)$. We define $\Div(R)$ 
just as we defined
$\Div(T)$ above. Note that this $\Div(R)$ coincides canonically
with the usual divisor group of fractional ideals of the PID~$R$,
since for $a \in R\setminus \{0\}$, the simple composition factors of
$R/Ra$ are the simple modules $R/P$ as $P$ ranges over the prime
ideal factors of the ideal $Ra$.

\begin{prop} \label{nrdinj} 
For $R= Z(T) = K[Y]$, there is a map $\Nrd \colon \Div(T) \ra \Div(R)$
such that the following diagram commutes:
\begin{equation}\label{nrddiag}
\begin{split}
\xymatrix{ D^* \ar[r] \ar[d]_{\Nrd_D} & Q^* \ar[dd]^{\Nrd_Q}
\ar[r]^{\de_T} &\Div(T) \ar[dd]^{\Nrd} \\
  Z(D)^* \ar[d]_{N_{Z(D)/K}} & & \\
K^*\ar[r] & q(R)^* \ar[r]^{\de_R} & \Div(R) }
\end{split}
\end{equation}  
Moreover, $\Nrd$ is injective.

\end{prop}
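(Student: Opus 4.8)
The plan is to construct the map $\Nrd\colon\Div(T)\to\Div(R)$ directly on generators and then verify commutativity and injectivity. Each simple left $T$-module $S$ is isomorphic to $T/Tp$ for some monic irreducible $p\in T$; since $T$ is a PID finite-dimensional over its center $R=K[y]$, the polynomial $p$ divides (on both sides) a unique monic irreducible $\pi\in R=Z(T)$, and I would first pin down this $\pi$ by noting that the two-sided ideal $Tp\cap R$ is generated by such a $\pi$. For $[S]=[T/Tp]$, I would set $\Nrd([S]) = m_{[S]}\,[R/R\pi]$, where the multiplicity $m_{[S]}$ is determined by the requirement that the diagram commute on the element $\de_T(p)=[S]$; concretely, $\Nrd_Q(p)$ lies in $q(R)^*$ and $\de_R(\Nrd_Q(p))$ is a $\zz$-combination of the $[R/R\pi]$, so one reads off $m_{[S]}$ from the $\pi$-component. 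The point is that $\Div(T)$ is free on $\mas$, so this defines $\Nrd$ unambiguously; the content is checking it is well-defined independent of the choice of $p$ representing $[S]$, which follows because isomorphic simple modules give associated irreducibles, hence the same $\pi$, and $\Nrd_Q$ is a conjugation-invariant homomorphism.

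Next I would establish commutativity of the right-hand square. Since $\de_T$ is surjective and both routes around the square are group homomorphisms, it suffices to check equality on $\de_T(f)$ for $f\in T\mi\{0\}$, and by Remark~\ref{deremarks}(i) and multiplicativity of $\Nrd_Q$ it reduces to the case $f=p$ irreducible, which holds by the very definition of $\Nrd$. For the left-hand square, the composite $D^*\to Q^*\to q(R)^*$ via $\Nrd_Q$ agrees with $D^*\xrightarrow{\Nrd_D} Z(D)^*\xrightarrow{N_{Z(D)/K}} K^*\hookrightarrow q(R)^*$: this is the standard transitivity of reduced norms through the chain $K\subseteq Z(D)\subseteq D\subseteq Q$ together with $\ind(Q)=\ell\,\ind(D)$ and $[Z(D):K]=\ell$, and $\de_R$ kills $K^*$ since elements of $K^*$ are units of $R$, so the outer rectangle commutes trivially on that part; the genuine commutativity statement is just the right square plus this bookkeeping.

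The main obstacle is injectivity of $\Nrd\colon\Div(T)\to\Div(R)$. The natural approach is to show that distinct generators $[T/Tp]$ map to multiples of distinct generators $[R/R\pi]$, i.e.\ that the correspondence $[S]\mapsto\pi$ from isomorphism classes of simple $T$-modules to monic irreducibles of $R$ is such that the induced map on the free groups is injective; this amounts to verifying that for each monic irreducible $\pi\in R$ the multiplicities $m_{[S]}$ attached to the various simple factors $[S]$ of $T/T\pi$ are positive (so no cancellation occurs across a single $\pi$) and that the fibers over different $\pi$'s are disjoint. The fiber-disjointness is clear since $p\mid\pi$ forces $\pi$ to be the unique monic irreducible central polynomial divisible by $p$; positivity of $m_{[S]}$ follows from $\de_R(\Nrd_Q(p)) = \Nrd(\de_T(p))$ being an effective divisor (as $\Nrd_Q(p)\in R\mi\{0\}$ up to units, being a power of the constant term of the minimal polynomial of $p$), so every coefficient is a nonnegative integer, and it is nonzero because $\Nrd_Q(p)$ is a nonunit. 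Hence $\Nrd$ carries the basis of $\Div(T)$ to a set of nonzero elements supported on pairwise disjoint sub-bases of $\Div(R)$, which forces injectivity.
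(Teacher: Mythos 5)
There is a genuine gap in your injectivity argument. You reduce injectivity to the claim that the assignment sending a basis element $[S]$ of $\Div(T)$ to a positive multiple $m_{[S]}[R/R\pi]$ cannot produce cancellation because the multiplicities are positive and the fibers over distinct $\pi$ are disjoint. But positivity does not prevent cancellation \emph{within} a fiber: if two non-isomorphic simple modules $S_1\not\cong S_2$ had the same central irreducible $\pi$ (you yourself allow for ``the various simple factors $[S]$ of $T/T\pi$''), then
$\Nrd\bigl(m_{[S_2]}[S_1]-m_{[S_1]}[S_2]\bigr)=0$ while $m_{[S_2]}[S_1]-m_{[S_1]}[S_2]\neq 0$, and injectivity fails. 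So your argument silently requires that each maximal ideal $P$ of $R$ supports at most one isomorphism class of simple $T$-modules. That statement is true, but it is essentially equivalent to the injectivity you are trying to prove (it is a nontrivial fact about the order $T$ in $Q$), and you give no argument for it. The paper avoids the issue entirely: it introduces the scalar-extension map $\rho\colon\Div(R)\to\Div(T)$, $jh_R(M)\mapsto jh_T(T\otimes_R M)$, and computes, using $T\otimes_R(R/Rr)\cong T/Tr$ and Remark~\ref{deremarks}(iv), that $\rho(\Nrd(\de_T(g)))=\de_T(\Nrd_Q(g))=n\,\de_T(g)$; hence $\rho\circ\Nrd=n\cdot\id_{\Div(T)}$, which is injective because $\Div(T)$ is torsion-free, forcing $\Nrd$ to be injective. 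Some such auxiliary map is what your proof is missing.

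A secondary problem is the construction itself. You define $\Nrd([S])$ as ``the $\pi$-component'' of $\de_R(\Nrd_Q(p))$ and then assert that commutativity of the right square ``holds by the very definition'' on irreducibles. These two statements are only compatible if $\de_R(\Nrd_Q(p))$ is in fact supported solely on $[R/R\pi]$ — i.e., if $\Nrd_Q(p)$ is, up to units of $R$, a power of $\pi$ — which you never verify (it is the content of the remark the paper states with proof omitted). Moreover, well-definedness on the generator $[S]=[T/Tp]=[T/Tp']$ requires $\de_R(\Nrd_Q(p))=\de_R(\Nrd_Q(p'))$; from Lemma~\ref{fs=tg} one only gets $ps=tp'$ with $\deg s=\deg t<\deg p$, and deducing $\de_R(\Nrd_Q(s))=\de_R(\Nrd_Q(t))$ from this is circular without the very homomorphism you are constructing. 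The paper's route is cleaner on both counts: it sets $\psi=\de_R\circ\Nrd_Q$ on all of $Q^*$, checks $D^*\subseteq\ker\psi$ (by \eqref{NrdD0}) and $Q'\subseteq\ker\psi$, and invokes Prop.~\ref{kerde} ($\ker\de_T=D^*Q'$) to factor $\psi$ through $\de_T$, which yields existence, well-definedness, and commutativity of the right square in one stroke.
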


\begin{proof}
Let $E= T[x^{-1}]= D[x, x^{-1}, \si]$, which with its grading by
degree in $x$ is a graded division ring with $E_0 = D$ and $q(E) =
Q$. Since $\ind(Q) = \ind(D)\,[Z(D):K]$, by (\ref{NrdD0}),
%  p.~\pageref{NrdD0}, 
for $d \in
D^{\ast} = E_0^{\ast}$, $\Nrd_Q(d) = N_{Z(D)/K} (\Nrd_D(d))$. This 
gives
the commutativity of the left rectangle in the diagram.

For the right vertical map in diagram~\eqref{nrddiag}, note that 
there is a canonical map, call
it $N\colon \Div(T) \ra \Div(R)$ given by taking a $T$-module $M$ 
(with ACC and DCC) and viewing it as an $R$-module; that is 
$N(jh_T(M)) =
jh_R(M)$. But, this is not the map $\Nrd\colon\Div(T) \ra \Div(R)$ we
need here! (Consider $N$ a norm map, while our $\Nrd$ is a reduced
norm map.) Note that as $T$ is integral over $R$ and $R$ is
integrally closed, $\Nrd_Q$ maps $T$ into $R$. 
Define a function 
$$
\psi\colon T\setminus\{0\} \to \Div(R) \quad \text{by} \quad 
\psi(f)  \ = \  \delta_R(\Nrd_Q(f))  \ = 
\ jh_R\big(R\big/R\,\Nrd_Q(f)\big). 
$$
Since $\Nrd_Q$ is multiplicative and $\delta _R$ is a 
group homomorphism, we have
\begin{equation}\label{phiformula}
\psi(fg) \ = \ \psi(f) + \psi(g) \quad \text {for all}
\quad f,g\in T\setminus\{0\}.
\end{equation}
We then extend $\psi$ to $Q^*$ by defining $\psi(fr^{-1})
= \psi(f) - \psi(r)$ for all $f\in T\setminus\{0\}$, 
$r \in R\setminus \{0\}$.  Equation \eqref{phiformula} shows
that $\psi$ is well-defined on  $Q^*$ and is a group homomorphism.
Since $\Nrd_Q(D^*) \subseteq K^* \subseteq R^*$ by \eqref{NrdD0}, 
$D^* \subseteq \ker(\psi)$.  Also, $Q' \subseteq \ker(\psi)$
as $\Div(R)$ is abelian.  Thus, by Prop.~\ref{kerde}, $\ker(\delta_T)
\subseteq \ker(\psi)$, so there is an induced homomorphism 
$\Nrd\colon \Div(T) \to \Div(R)$ such that $\Nrd\circ \delta_T
= \psi$ on $Q^*$.  This is the map we need.  Since for every 
$f\in T\setminus\{0\}$, $\Nrd(\delta_T(f)) = \psi(f) = 
\delta_R(\Nrd_Q(f))$, the right rectangle in \eqref {nrddiag} is 
commutative. 

We have a scalar extension map from $R$-modules to $T$-modules given
by $M \ra T\otimes_RM$. This induces a map $\rho \colon\Div(R) \ra
\Div(T)$ given by $\rho(jh_R(M))= jh_T(T\otimes_RM)$. For any 
$r \in R$, we have $T \otimes_R (R/Rr) \cong T/ Tr$. Thus for any $g \in T\setminus
\{0\}$,
\begin{align*}
\rho(\Nrd(\de_T(g)))  \ &= \  \rho(\de_R(\Nrd_Q(g)))  \ =  \ 
\rho(jh_R(R/R\Nrd_Q(g)))   \\
&= \  jh_T(T/T\Nrd_Q(g))  \ = \  \de_T(\Nrd_Q(g))  \ = \  n\,\de_T(g), 
\end{align*} 
using Remark~\ref{deremarks}(iv). 
% (4.) on page \pageref{note4}. 
This shows that $\rho \circ
\Nrd\colon \Div(T) \ra \Div(T)$ is multiplication by $n$, which is an
injection, as $\Div(T)$ is a torsion-free abelian group. Hence
$\Nrd$ must be injective.
\end{proof}

\begin{remark}
 Here is a description of  how the maps 
$\Nrd\colon \Div(T) \to \Div(R)$ and  
$N\colon \Div(T) \to \Div(R)$ and $\rho\colon \Div(R) \to \Div(T)$
are related,  and a formula for  $\Nrd$ on generators of 
$\Div(T)$.
Proofs are omitted.
We have 
\begin{equation}
\rho\circ \Nrd  \ = \  n \id_{\Div(T)};
\end{equation}
and 
\begin{equation}
N \ = \ n\cdot\Nrd.
\end{equation}
Let $S$ be any simple left $T$-module, and $[S]$
the corresponding basic generator of $\Div(T)$. Let $M = \ann_T(S)$,
and let $P = \ann_R(S)$,
which is a maximal ideal of $R$. Let $k$ = matrix of size of $T/M$ = 
$\dim_{\De}(S)$, where
$\De = \End_T(S)$, so $T/M \cong M_k (\De)$. Then,  
\begin{equation}
\Nrd([S])  \ = \  n_S[R/P], \textrm{  \ \ where \ \ } n_S  \ = \  
\textstyle\frac{1}{nk}
\dim_{R/P}(T/M) \ = \  \ind(T/M).
\end{equation}
\end{remark}

We now consider an arbitrary graded division ring $E$.  As usual,
we assume throughout that $\Gamma_E$  is a torsion-free abelian 
group and $[E:Z(E)] <\infty$.

\begin{lemma} \label{skeskqinj}
Let $E$ be a graded division ring, and let $Q=q(E)$. Then, the 
canonical map 
$\SK(E) \ra \SK(Q)$ is injective.
\end{lemma}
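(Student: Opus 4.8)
The plan is to exploit the splitting homomorphism $\la\colon Q^* \ra E^*$ constructed in Section~\ref{prel}: it is a group homomorphism whose restriction to $E^*\sbeq Q^*$ is the identity, so in particular $\la$ maps $Q^*$ \emph{onto} $E^*$. Before using it, I would first record that the canonical map in the statement is well-defined: by Prop.~\ref{normfacts}(i) we have $\Nrd_Q(a)=\Nrd_E(a)$ for every $a\in E$, so $E^{(1)}\sbeq Q^{(1)}$, and since $E'\sbeq Q'$ this induces a homomorphism $E^{(1)}/E'\ra Q^{(1)}/Q'$, which is the map under consideration.

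The key observation is that $\la(Q')=E'$. Indeed, for any group homomorphism $f\colon G\ra H$ one has $f([G,G])=[f(G),f(G)]$; applying this to $\la$ and using $\la(Q^*)=E^*$ gives $\la(Q')=\la([Q^*,Q^*])=[E^*,E^*]=E'$. Now suppose $a\in E^{(1)}$ represents a class of $\SK(E)$ that dies in $\SK(Q)$; this means precisely that $a\in Q'$. Since $a\in E^{(1)}\sbeq E^*$ (Prop.~\ref{normfacts}(iii)), we have $a=\la(a)\in\la(Q')=E'$, so $a$ already represents the trivial class in $\SK(E)$. Hence $\SK(E)\ra\SK(Q)$ is injective.

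I do not expect a genuine obstacle: the entire content lies in recalling that $\la$ exists, is multiplicative, and fixes $E^*$ pointwise, all established in Section~\ref{prel}, together with the elementary functoriality of commutator subgroups. The one point worth emphasizing is that this direct argument only needs $\la(Q')\sbeq E'$ and never needs to know that $\la$ carries $Q^{(1)}$ into $E^{(1)}$. If instead one wished to phrase the result as \textquotedblleft$\la$ induces a retraction $\SK(Q)\ra\SK(E)$ of the canonical map\textquotedblright, one would additionally have to verify $\la(Q^{(1)})\sbeq E^{(1)}$, a slightly more delicate reduced-norm computation that the argument above sidesteps; so I would present the short direct version.
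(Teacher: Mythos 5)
Your proof is correct and is essentially the paper's own argument: both rest on the splitting homomorphism $\la\colon Q^*\to E^*$ fixing $E^*$ pointwise and carrying $Q'$ into $E'$, so that any $a\in E^{(1)}\cap Q'$ satisfies $a=\la(a)\in E'$. The only difference is that you spell out why $\la(Q')=E'$ via functoriality of commutator subgroups, which the paper simply asserts.
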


\begin{proof}
Recall from Prop.~\ref{normfacts}(i) that $\Nrd_E(a)=\Nrd_Q(a)$ for all 
$a\in E$, so the inclusion ${E^* \hookrightarrow Q^*}$ yields 
 a map  
$\SK(E) = E^{(1)}/ E' \ra Q^{(1)} / Q'=\SK(Q) $. 
Also recall the homomorphism  $\la\colon Q^* \rightarrow E^*$ of  
(\ref{lamfun}), which 
maps $Q'$ to $E'$. Since the composition
$E^{\ast} \hookrightarrow Q^{\ast} \stackrel{\la}{\rightarrow} E^*$
 is the identity map, for any $a\in E^{(1)}\cap Q'$, we have 
$a = \lambda(a) \in E'$.  Thus, the map
$\SK(E) \ra \SK(Q)$ is injective.
\end{proof}

\begin{proposition} \label{iso}
Let $E$ be a graded division ring, and let $Q=q(E)$. Then,
$$
 Q^{(1)}  \ = \  (Q^{(1)} \cap E_0)Q'.
$$
\end{proposition}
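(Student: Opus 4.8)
The goal is to show that every element of $Q^{(1)}$ can be written modulo $Q'$ as an element of $Q^{(1)} \cap E_0$, where $Q = q(E)$ and $E = \bigoplus_{\gamma \in \Gamma_E} E_\gamma$ is a graded division ring with torsion-free grade group. The natural strategy is reduction to the rank-one case handled above via the twisted Laurent polynomial ring, combined with an induction argument. More precisely, I would first reduce to the case $\Gamma_E \cong \mathbb{Z}^k$: since $\Gamma_E$ is a finitely generated torsion-free abelian group (finite generation follows from $[E:Z(E)] < \infty$, as $\Gamma_E/\Gamma_{Z(E)}$ is finite and $\Gamma_{Z(E)}$ is a subgroup of the value group, hence f.g.), we have $\Gamma_E \cong \mathbb{Z}^k$ for some $k$. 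Then I would induct on $k$.

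\textbf{The inductive step.} Fix a total ordering on $\Gamma_E \cong \mathbb{Z}^k$ and split off a direct summand $\mathbb{Z}\theta$, so $\Gamma_E = \Gamma' \oplus \mathbb{Z}\theta$ for a subgroup $\Gamma'$ of rank $k-1$. Set $E' = \bigoplus_{\gamma \in \Gamma'} E_\gamma$, a graded division subring of $E$ with $[E':Z(E')] < \infty$ and $\Gamma_{E'} = \Gamma'$. Pick a homogeneous $x \in E^*$ of degree $\theta$; conjugation by $x$ gives an automorphism $\sigma$ of $E'$, and $E$ is the twisted Laurent polynomial ring $E'[x, x^{-1}, \sigma]$ in the graded sense. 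The ungraded twisted polynomial ring machinery (Propositions~\ref{kerde} and~\ref{nrdinj}) was developed for $T = D[x,\sigma]$ with $D$ an \emph{ungraded} division ring, but the key facts there — that $\Div(T)$ is torsion-free, that $\ker(\delta) = D^* Q'$, and that $\Nrd\colon \Div(T) \to \Div(R)$ is injective — carry over verbatim with $D$ replaced by the graded division ring $E'$ (with $Q = q(E) = q(E'[x,x^{-1},\sigma])$), since all those arguments only used that $T$ is a principal ideal domain, which holds here by the left and right division algorithms for twisted polynomials over any division ring. The crucial consequence: if $q \in Q^{(1)}$, then $\delta(q) = 0$ by Remark~\ref{deremarks}(v), so $q \in (E')^* Q'$, i.e.\ $q \equiv q'$ mod $Q'$ for some $q' \in (E')^*$.

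\textbf{Finishing.} Since $\Nrd_Q$ is trivial on $Q'$ and $\Nrd_Q(q) = 1$, we get $\Nrd_Q(q') = 1$; now $q' \in (E')^*$, and by Proposition~\ref{normfacts}(i) (applied to the graded division ring $E'$ with quotient $q(E')$) together with the compatibility of reduced norms under the inclusion $q(E') \hookrightarrow Q$, one checks $\Nrd_{q(E')}(q') = 1$, so $q' \in q(E')^{(1)}$. By the inductive hypothesis applied to the smaller-rank graded division ring $E'$, we have $q(E')^{(1)} = (q(E')^{(1)} \cap E'_0)\, q(E')'$, and since $E'_0 = E_0$ and $q(E')' \subseteq Q'$, this yields $q' \equiv e$ mod $Q'$ for some $e \in Q^{(1)} \cap E_0$. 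Hence $q \equiv e$ mod $Q'$, completing the induction. The base case $k=0$ is trivial ($E = E_0$ is already a division ring, $Q = E$), and the case $k=1$ is exactly what Propositions~\ref{kerde} and~\ref{nrdinj} give.

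\textbf{Main obstacle.} The technical heart of the argument is verifying that the twisted-polynomial-ring apparatus of Propositions~\ref{kerde}--\ref{nrdinj} goes through with the coefficient division ring taken to be the graded division ring $E'$ rather than an ordinary division ring $D$ — in particular that $T = E'[x,\sigma]$ is still a principal ideal domain and that $Z(T)$ and $\Div(R)$ behave as needed so that the injectivity of $\Nrd\colon\Div(T)\to\Div(R)$ (hence Remark~\ref{deremarks}(v)) survives. This is routine but must be stated carefully, since the rest of the paper's twisted-polynomial discussion was phrased for ungraded $D$; alternatively, one can sidestep this by applying the already-proven rank-one results directly to the graded division ring $E'$ in place of $D$ wherever they were stated, observing that $E'$ being a (graded) division ring is all those proofs actually used.
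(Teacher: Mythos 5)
Your overall architecture (peel off one $\mathbb Z$ from $\Gamma_E$ at a time, use the $\Div(T)$ machinery to descend, induct on rank) is the same as the paper's Case~I, but the proof as written has a fatal gap at the descent step, plus two secondary problems. The fatal gap is the sentence asserting that $\Nrd_Q(q')=1$ for $q'\in q(E')^*$ yields $\Nrd_{q(E')}(q')=1$ ``by compatibility of reduced norms under the inclusion $q(E')\hookrightarrow Q$.'' There is no such compatibility: $q(E')$ is a proper sub-division-ring of $Q$ with smaller index and larger center, and by Prop.~\ref{normfacts}(iv) applied to the graded division ring $C=q(E')[x,x^{-1},\sigma]$ (so $C_0=q(E')$, $q(C)=Q$) one has $\Nrd_Q(q') = N_{Z(q(E'))/Z(q(E'))^{\sigma}}\bigl(\Nrd_{q(E')}(q')\bigr)^{m}$, a composite of a reduced norm, a field norm, and an $m$-th power. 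So $\Nrd_Q(q')=1$ only tells you that $\Nrd_{q(E')}(q')$ lies in the kernel of a norm map (up to $m$-torsion), not that it equals $1$; consequently your inductive hypothesis for $E'$ cannot be invoked. This discrepancy is exactly what the paper's proof is built to overcome: from $\Nrd_Q(a)=1$ it deduces only $N_G(\Nrd(\de_T(a)))=0$ in $\Div(R)$, uses that $\Div(T)$ is a permutation $G$-module so $\widehat H^{-1}(G,\mfn)=0$, i.e.\ $\ker(N_G)=I_G(\mfn)$, realizes each generator $\beta-\si(\beta)$ of $I_G(\mfn)$ as the divisor of an explicit commutator $bub^{-1}u^{-1}\in[Q_i^*,Q^*]$ with $u$ a product of the grading elements $x_j$ lying \emph{outside} $Q_i$, and only after dividing by such a commutator gets $\de_T(a')=0$ so that Prop.~\ref{kerde} applies. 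For the same reason the correct induction statement is not the proposition itself for $E'$ but the inclusion $Q_i^*\cap Q^{(1)}\subseteq (Q_{i-1}^*\cap Q^{(1)})[Q_i^*,Q^*]$, in which the reduced norm condition is always taken in the top algebra $Q$.

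Two further points. First, your reduction to $\Gamma_E\cong\zz^k$ is unjustified: $\Gamma_{Z(E)}$ is an arbitrary torsion-free abelian group (e.g.\ $\mathbb Q$) and need not be finitely generated, so neither need $\Gamma_E$. The paper's Case~II handles this by writing $E$ as a direct limit of graded sub-division-rings $E_\Delta$ with $\Delta$ finitely generated and, crucially, $\Delta+\Gamma_F=\Gamma_E$, which forces $[E_\Delta:Z(E_\Delta)]=[E:Z(E)]$ and hence compatibility of reduced norms; without that extra condition the indices, and therefore the reduced norms, would not match. Second, your ``main obstacle'' paragraph does not actually resolve itself: $T=E'[x,\si]$ with $E'$ a \emph{graded} division ring is not amenable to the division algorithm, since leading coefficients (non-homogeneous elements of $E'$) need not be units; one must take the coefficients to be the genuine division ring $q(E')$ (the paper uses $T=Q_{i-1}[x_i,\ph_i]$), and then arrange separately that the descent lands in $Q_{i-1}$ rather than merely in $q(E')$.
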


Once this proposition is proved, it will quickly yield the main 
theorem of this section:

\begin{thm}\label{isothm}
Let $E$ be a graded division ring. 
Then, $\SK(E) \cong \SK(q(E))$.
\end{thm}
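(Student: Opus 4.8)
The plan is to deduce Theorem~\ref{isothm} directly from Proposition~\ref{iso} and Lemma~\ref{skeskqinj}, so essentially all of the work is in Proposition~\ref{iso}, and the theorem itself is a short diagram chase. I would argue as follows. By Lemma~\ref{skeskqinj} the canonical map $\SK(E)\to\SK(Q)$ is injective, so it suffices to show it is surjective. Given Proposition~\ref{iso}, any $q\in Q^{(1)}$ can be written $q=a\,c$ with $a\in Q^{(1)}\cap E_0$ and $c\in Q'$; modulo $Q'$ we have $q\equiv a$. Moreover $a\in E_0^*\subseteq E^*$ and, by Proposition~\ref{normfacts}(i), $\Nrd_E(a)=\Nrd_Q(a)=\Nrd_Q(q)=1$ since $c\in Q'\subseteq Q^{(1)}$; hence $a\in E^{(1)}$ and its class in $\SK(E)$ maps to the class of $q$ in $\SK(Q)$. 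This shows surjectivity, and combined with injectivity gives $\SK(E)\cong\SK(q(E))$.

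For Proposition~\ref{iso} itself, the strategy mirrors the twisted-polynomial case treated above via Proposition~\ref{kerde} and Proposition~\ref{nrdinj}, but now for a general torsion-free grade group rather than just $\Gamma_E\cong\mathbb Z$. First I would reduce to the case $\Gamma_E\cong\mathbb Z^n$ by a direct-limit argument: any element of $Q^{(1)}$ and the finitely many relations witnessing it being a product of commutators and a reduced-norm-one element involve only finitely many homogeneous components, hence lie in a graded division subring $E'$ with finitely generated (so free abelian) grade group, and $q(E')\subseteq q(E)$; proving the statement for all such $E'$ suffices. Then, writing $\Gamma_E\cong\mathbb Z^n$, I would induct on $n$: choosing a convex subgroup $\Gamma'\cong\mathbb Z^{n-1}$ with $\Gamma_E/\Gamma'\cong\mathbb Z$, the subring $E'=\bigoplus_{\gamma\in\Gamma'}E_\gamma$ is a graded division ring, and $E$ itself is graded by $\mathbb Z$ over $E'$, i.e. $E\cong E'[x,x^{-1},\varphi]$ in the graded sense with $E'$ in the role of the "coefficient" graded division ring. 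The subtlety is that $E'$ need not be a field nor finite-dimensional over $E_0$ in a way that immediately lets us invoke the twisted-polynomial results, so I would instead pass to quotient division rings: set $D=q(E')$, let $\sigma$ be the automorphism of $D$ induced by conjugation by $x$; then $q(E)=q\big(E'[x,x^{-1},\varphi]\big)=D(x,\sigma)$, and the twisted-polynomial machinery (Propositions~\ref{kerde} and~\ref{nrdinj}) applies with $D$ in place of $D$ there.

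The key computation is then: given $q\in Q^{(1)}$ with $Q=q(E)=D(x,\sigma)$, use Remark~\ref{deremarks}(v) to get $\delta(q)=0$, hence by Proposition~\ref{kerde} $q\in D^*Q'$, say $q\equiv d\pmod{Q'}$ with $d\in D^*=q(E')^*$. Taking reduced norms, $\Nrd_Q(d)=1$, and since $\Nrd_Q|_{D^*}$ factors as $N_{Z(D)/K}\circ\Nrd_D$ (the formula from \eqref{NrdD0}, as used in the proof of Proposition~\ref{nrdinj}), applying $\delta_R$ and the injectivity of $\Nrd\colon\Div(T)\to\Div(R)$ from Proposition~\ref{nrdinj} forces $\delta_R(\Nrd_Q(d))=0$ to propagate back so that in fact $d$ lies in $D^{(1)}D'$ — more precisely, one shows $Q^{(1)}\cap D^*\subseteq D^{(1)}Q'$. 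Now apply the inductive hypothesis to the graded division ring $E'$ with quotient $D$: $D^{(1)}=(D^{(1)}\cap E_0)\,D'$, so $d\equiv d_0\pmod{D'}\subseteq\pmod{Q'}$ with $d_0\in D^{(1)}\cap E_0\subseteq Q^{(1)}\cap E_0$. Chaining the congruences gives $q\equiv d_0\pmod{Q'}$ with $d_0\in Q^{(1)}\cap E_0$, which is exactly $q\in(Q^{(1)}\cap E_0)Q'$, completing the induction; the base case $n=0$ is $E=E_0$, $Q=q(E_0)$, where the statement is the classical fact that a reduced-norm-one element of $q(E_0)$ is, modulo commutators, already in $E_0$ — itself provable by the same $\delta$/Dieudonné-determinant argument or cited from the literature.

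The main obstacle I anticipate is the passage from "$q\equiv d\pmod{Q'}$ with $d\in q(E')^*$" to controlling $d$ well enough to invoke the inductive hypothesis: one must verify that $d$ can be taken in $Q^{(1)}\cap q(E')^*$ (not merely that $q\Nrd$-to-$1$) and then that $Q^{(1)}\cap q(E')^*\subseteq q(E')^{(1)}\,Q'$, which is where the full force of the commuting diagram in Proposition~\ref{nrdinj} and the injectivity of the divisor-level reduced norm is needed. Handling the interaction between the two reduced-norm maps $\Nrd_{q(E)}$ and $\Nrd_{q(E')}$ (they differ by the factor $[Z(q(E')):K]$) and keeping track of how $Q'$ meets $q(E')^*$ is the delicate bookkeeping; everything else is either the direct-limit reduction or a routine diagram chase.
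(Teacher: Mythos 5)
Your derivation of the theorem from Proposition~\ref{iso} and Lemma~\ref{skeskqinj} is exactly the paper's proof: injectivity from the lemma, surjectivity because the image $E^{(1)}Q'/Q'$ contains $(Q^{(1)}\cap E_0)Q'/Q' = \SK(Q)$. That part is fine. But since you put essentially all the weight on your sketch of Proposition~\ref{iso}, the gaps there matter, and there are two.

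First, the direct-limit reduction to finitely generated grade group is not automatic: if you pass to an arbitrary graded sub-division ring $E_\Delta$ with $\Delta$ finitely generated, its center and index can differ from those of $E$, so $\Nrd_{q(E_\Delta)}$ need not agree with $\Nrd_{q(E)}$ on $q(E_\Delta)$, and ``reduced norm one'' is not preserved. The paper handles this by enlarging $\Delta$ so that $\Delta+\Gamma_F=\Gamma_E$ (adjoining coset representatives of $\Gamma_E/\Gamma_F$), which forces $E_\Delta\otimes_{Z(E_\Delta)}F\cong E$ and hence compatibility of reduced norms. Second, and more seriously, your inductive step conflates the two reduced norms at the crucial moment. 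From $q\equiv d \pmod{Q'}$ with $d\in D^*=q(E')^*$ you get $\Nrd_Q(d)=1$, but the relation between the norms is $\Nrd_Q(d)=N_{Z(D)/Z(D)^G}(\Nrd_D(d))^m$, where $G$ is the (generally nontrivial) group of outer automorphisms of $D$ induced by conjugation by the remaining homogeneous generators. So $\Nrd_Q(d)=1$ does \emph{not} give $\Nrd_D(d)=1$, and you cannot invoke an inductive hypothesis of the form $D^{(1)}=(D^{(1)}\cap E_0)D'$. What it gives, at the level of divisors, is that $\Nrd(\delta_T(d))$ lies in $\ker(N_G)$ inside $\mathfrak N=\Nrd(\Div(T))$. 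The missing idea --- the heart of the paper's Case I --- is that $\mathfrak N$ is a \emph{permutation} $G$-module, so $\widehat H^{-1}(G,\mathfrak N)=0$, i.e.\ $\ker(N_G)=I_G(\mathfrak N)$; one then realizes each generator $\beta-\sigma(\beta)$ of $I_G(\mathfrak N)$ as $\delta_R(\Nrd_{Q_i}(bub^{-1}u^{-1}))$ for an explicit commutator in $[Q_i^*,Q^*]$, corrects $d$ by such a commutator to kill its divisor, and only then applies Proposition~\ref{kerde} to descend one level. You name this obstacle in your last paragraph but do not resolve it; without the $\widehat H^{-1}$ vanishing argument the induction does not close.
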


\begin{proof} Set $Q=q(E)$. Since the reduced norm respects 
scalar extensions, $Q^{(1)} \cap E_0 \subseteq E^{(1)}$. The image
of the 
 map $\xi\colon\SK(E) \ra \SK(Q)$ is $E^{(1)}Q'/Q'$, which thus 
contains $(Q^{(1)} \cap E_0)Q' / Q' = Q^{(1)}/ Q' = \SK(Q)$ 
(using Prop.~\ref{iso}). Thus $\xi$ is surjective, as well as 
being injective  by Lemma ~\ref{skeskqinj},  proving the theorem.
\end{proof}

\begin{proof}[Proof of Prop.~\ref{iso}]
 We first treat the  case where 
 $\Ga_E$ is finitely generated.

{ Case I.}
{\it Suppose $\Ga_E = \mathbb{Z}^{n}$ for some $n \in \mathbb N$.}

Let $F= Z(E)$,
a graded field, and let $\ep_i = (0, \ldots, 0 ,1, 0, \ldots, 0)$
($1$ in the i-th position), so $\Ga_E = \mathbb{Z} \ep_1 \oplus
\ldots \oplus \mathbb{Z}\ep_n$. For $1 \leq i \leq n$, let $\Delta_i
= \mathbb{Z} \ep_1 \oplus \ldots \oplus \mathbb{Z}\ep_i \subseteq
\Ga_E;$ and let $S_i = E_{\Delta_i} =  \bigoplus_{\ga \in \Delta_i}
E_{\ga}$, which is a graded sub-division ring of $E$. Let $Q_i =
Q(S_i)$, the quotient division ring of $S_i$; so $Q_n =Q$ as $S_n =
E$. Set $R_0 = Q_0 = E_0$. Note that $[S_i : (S_i \cap F)] < \infty$,
so $Q_i$~is obtainable from $S_i$  by inverting the nonzero elements
of $S_i \cap F$. This makes it clear that $Q_i \subseteq Q_{i+1}$,
for each $i$.

For each $j$, $1 \leq j \leq n$, choose and fix a nonzero element
 $x_j \in  E_{\ep_j}$. Let ${\ph_j = \intt(x_j) \in \Aut(E)}$ (i.e.,
$\ph_j$ is conjugation by $x_j$). Since $\ph_j$ is a degree-preserving automorphism of $E$, $\ph_j$ maps each $S_i$ to itself.
Hence, $\ph_j$ extends uniquely to an automorphism to $Q_i$, also
denoted $\ph_j$. Since each $\Ga_E/ \Ga_F$ is a torsion abelian group,
there is $\ell_j \in \mathbb  N$ such that $\ell_j \ep_j \in \Ga_F$. Then, if
we choose any nonzero $z_j \in F_{\ell_j \ep_j}$, we have
$x_j^{\ell_j} \in E_{\ell_j \ep_j} = E_0 z_j$. So,
$x_j^{\ell_j} = c_j z_j$ for some $c_j \in E_0^{\ast}$, and $z_j
\in F = Z(E)$. Then $\ph_j^{\ell_j} = 
\intt({x_j}^{l_j})=\intt(c_j z_j) = \intt(c_j)$.
Thus, $\ph_j^{\ell_j}|_{S_i}$ is an inner automorphism of $S_i$ for
each $i$, as $c_j \in E_0^{\ast} \subseteq S_i$.

Now, fix $i$ with $1 \leq i \leq n$. We will prove:
\begin{equation} \label{eqn}
Q_i^{\ast} \cap
Q^{(1)}  \ \subseteq \  (Q_{i-1}^{\ast} \cap Q^{(1)}) [Q_i^{\ast},
Q^{\ast}].
\end{equation}

We have $S_i = S_{i-1}[x_i, x_i^{-1}] \cong S_{i-1} [x_i,
x_i^{-1}, \ph_i]$ (twisted Laurent polynomial ring). Likewise,
within $Q_i$ we have $Q_{i-1}[x_i] \cong Q_{i-1}[x_i, \ph_i]$
(twisted polynomial ring), with $\ph_i^{\ell_i}$ an inner
automorphism of $Q_{i-1}$. In order to invoke Prop.~\ref{nrdinj},
let 
$$ 
T  \, = \  Q_{i-1} [x_i]  \ \cong  \ Q_{i-1}[x_i, \ph_i]\quad
\text{and let} \quad R= Z(T).
$$
Since $S_{i-1}[x_i] \subseteq T \subseteq Q_i =
q(S_{i-1}[x_i])$, we have $q(T) = Q_i$.
%%  Let $K =
%% Z(Q_{i-1})^{\ph_i}$, so that $Z(T) = K[y_i]$, where $y_i =
%%  x_i^{\ell}c_i$ for some $\ell \leq \ell_i$ and $c_i \subset
%% Q_{i-1}$ with $\ph_i(c_i) = c_i$. Let
%% $$
%% R \, = \, Z(T)  \,= \, K[y_i].
%% $$
 Let $\mathcal{G} \subseteq \Aut(Q_i)$ be the subgroup of 
automorphisms of
$Q_i$ generated by $\ph_{i+1}, \ldots, \ph_n$, and let $G= \maj /
(\maj \cap \Inn(Q_i))$, where $\Inn(Q_i)$ is the group of inner
automorphisms of $Q_i$. 
Since  Skolem-Noether shows that $\Inn(Q_i)$ is the kernel of the 
restriction map $\Aut(Q_i) \to \Aut(Z(Q_i))$, this $G$ maps injectively 
into $\Aut(Z(Q_i))$.  For $\sigma \in G$, we write 
$\sigma|_{Z(Q_i)}$ for the automorphism of $Z(Q_i)$ determined by $\sigma$.
Note that $G$~is a finite abelian group,
since the images of the $\ph_i$ have finite order in $G$ and commute
pairwise. (For, we have $x_j x_k = c_{jk} x_k x_j$ for
some $c_{jk} \in E_0^{\ast}$. Hence $\ph_j \ph_k = \intt(c_{jk}) \ph_k
\ph_j$ and $\intt(c_{jk}) \in \Inn(Q_i)$, as $c_{jk} \in E_0^{\ast}
\subseteq Q_i^{\ast}$). 
Every element of $\maj$ is an automorphism of 
$S_{i-1}[x_i]$ preseerving degree in $x_i$, so an automorphism of
$T$, since this is true of each $\ph_j$. Therefore we have a group
action of $\maj$ on $T$ by ring automorphisms,
 and an induced action of $\maj$ on~
$\Div(T)$. 
%% (For this, it suffices to look at the action of $\maj$ on
%% $T$-modules of the form $T/Tf$, for $f \in T \mi \{0\}$. Note that for $\psi
%% \in \maj$, if $T/Tf \cong T/ Tg$, then $T/T\psi(f) \cong T/
%% T\psi(g)$.)
 Note that as any $\psi \in \maj$ permutes the maximal left
ideals of $T$, the action of $\psi$ on $\Div(T)$ arises from an
action on the base of $\Div(T)$ consisting of isomorphism classes of
simple $T$-modules. That is, $\Div(T)$ is a permutation
$\maj$-module.  $\maj$ also acts on $R= Z(T)$ 
by ring automorphisms, and on
$\Div(R)$, and  all the maps in the commutative
diagram below (see Prop.
~\ref{nrdinj}) are $\maj$-module homomorphisms.
\begin{equation}\label{diagndr}
\begin{split}
\xymatrix{ Q_i^{\ast} \ar[r]^{\de_T} \ar[d]_{\Nrd_{Q_i}} & \Div(T)
\ar[d]^{\Nrd} \\
Z(Q_i)^{\ast} \ar[r]^{\de_R} & \Div(R) }
\end{split}
\end{equation}
Since inner automorphisms of $Q_i$ act trivially on $\Div(T)$ (see
Remark~\ref{deremarks}(iii)), and on $Z(Q_i)$ and  $\Div(R)$, these 
$\maj$-modules are actually $G$-modules. Let
$$
\mfn \ = \  \Nrd(\Div(T))  \ \subseteq  \ \Div(R).
$$
Because $\Nrd\colon\Div(T) \ra \Div(R)$ is injective (see Prop.
~\ref{nrdinj}), $\mfn$ is a $G$-module isomorphic to $\Div(T)$, so
$\mfn$ is a permutation $G$-module. In $\mfn$ we have two
distinguished $\maj$-submodules,
\begin{align*}
\mfn_0  \ &=  \ \ker (N_G), \text{where $N_G\colon \mfn \ra \mfn$ 
is the
norm, given by $N_G(b) = \textstyle\sum_{\si \in G} \si(b)$; and} \\
I_G(\mfn)  \ &=  \ \big\langle \{\beta - \si(\beta) \,|\  
\beta \in \mfn, \, 
\si \in G
\} \big\rangle \subseteq \mfn_0.
\end{align*}

By definition, ${\widehat{H}}^{-1}(G, \mfn) = \mfn_0 / I_G(\mfn)$. 
But, because $\mfn$ is a permutation $G$-module, 
${\widehat{H}}^{-1}(G, \mfn)
= 0$. (This is well known, and is an easy calculation, as 
$\mfn$ is a direct sum of $G$-modules of the form 
$\mathbb Z[G/H]$ for subgroups $H$ of $G$.) That is,
$\mfn_0 = I_G(\mfn)$.

Take any generator $\beta - \si(\beta)$ of $I_G(\mfn)$, where $\si \in G$ 
and $\beta
\in \mfn$, say $\beta= \Nrd(\eta)$, where $\eta \in \Div(T)$. Take any
$b \in Q_i^{\ast}$ with $\de_T(b)= \eta$, and 
choose $u \in E^{\ast}$
which is some product of the~$\varphi_j$ $(i+1 \leq j \leq n)$, such
that $\intt(u)|_{Z(Q_i)} = \si|_{Z(Q_i)}$. 
Then, $\de_R(\Nrd_{Q_i}(b)) =
\Nrd(\de_T(b)) = \beta$ 
(see \eqref{diagndr}). Also, because $\intt(u)|_{Q_i}$ is an
automorphism of $Q_i$, we have $\Nrd_{Q_i}(ub^{-1}u^{-1}) = u\,
\Nrd_{Q_i}(b^{-1}) u^{-1}$. Thus, $bub^{-1}u^{-1} \in [Q_i^*,Q^*]
\cap Q_i$ and 
\begin{align*}
\Nrd_{Q_i}(bub^{-1}u^{-1})  \ &= \  \Nrd_{Q_i}(b)\,
\Nrd_{Q_i}(ub^{-1}u^{-1}) \\
&=  \ \Nrd_{Q_i}(b)\,u\, \Nrd_{Q_i}(b^{-1}) u^{-1} 
 \ = \ \Nrd_{Q_i}(b) \big/ \si(\Nrd_{Q_i}(b)). \nono
\end{align*}
Hence, in $\Div(R)$,
$$
 \de_R\big(\Nrd_{Q_i}(bub^{-1}u^{-1})\big)  \ =  \ 
\de_R \big(\Nrd_{Q_i}(b)/ \si \Nrd_{Q_i}(b)\big)  \ = \  \beta- \si(\beta).
$$ 
Since such $\beta -\sigma(\beta)$ generate $I_G(\mfn)$, it follows
that for any $\ga \in I_G(\mfn)$, there is $c \in [Q_i^{\ast},
Q^{\ast}] \cap Q_i$, with $\ga =\de_R(\Nrd_{Q_i}(c)) 
=\Nrd(\de_T(c))$  (see\eqref{diagndr}).

To prove (\ref{eqn}), we need a formula for $\Nrd_Q$ for an element of
$Q_i$. For this, note that ${E= S_i [ x_{i+1},  x_{i+1}^{-1},
\ldots , x_n, x_n^{-1}]}$ which can be considered a graded ring
over $S_i$. 
Now, let \break $C = Q_i[x_{i+1},  x_{i+1}^{-1}, \ldots ,
x_n, x_n^{-1}] \subseteq Q$. This $C$  is a graded division
ring with $C_0 = Q_i$ and $\Ga_C = \mathbb{Z} \ep_{i+1} \oplus \ldots \oplus
\mathbb{Z} \ep_n$. Since $E \subseteq C \subseteq Q
= q(E)$, we have $q(C) = Q$. For the graded field $Z(C)$ we have
$Z(C)_0$ consists of those elements of $Z(C_0) = Z(Q_i)$
centralized by $x_{i+1}, \ldots , x_n$, i.e., 
$Z(C)_0$ is the fixed field $Z(Q_i)^{\maj} = Z(Q_i)^G$. 
Since, as noted earlier $G$ injects into $\Aut(Z(Q_i)$, we have
$G \cong \Gal(Z(Q_i)/Z(C)_0)$.
Thus, for any $q \in Q_i = C_0$, 
by Prop.~\ref{normfacts}(i)~and~(iv), 
\begin{align*}
\Nrd_Q(q)  \ &=  \ \Nrd_{q(C)}(q)  \ = \  \Nrd_C(q)  
 \ = \  N_{Z(C_0)/ Z(C_0)^G}(\Nrd_{C_0}(q))^{m} \\
&= \ N_{Z(Q_i)/ Z(Q_i)^G}(\Nrd_{Q_i}(q))^{m}, 
\end{align*}
where $m = \ind(Q)/ \ind(Q_i)[Z(Q_i):Z(Q_i)^G].$ 
%% Here, $G$ is the
%% group of automorphisms of $Z(Q_i)$ determined by $\intt(x_{i+1}),
%% \ldots, \intt(x_{n})$, so this is the same $G$ as in the preceding
%% discussion.

To verify (\ref{eqn}), take any $a \in Q_i^{\ast} \cap Q^{(1)}$. 
Thus,
$$ 
1  \, =  \ \Nrd_Q(a)  \ = \  N_{Z(Q_i) / Z(Q_i)^G}(\Nrd_{Q_i}(a))^m. 
$$
Hence, for $\al = \de_T(a) \in \Div(T)$, using the identification of 
$G$ with $\Gal(Z(Q_i)/Z(C)_0)$ and commutative 
diagram~\eqref{diagndr}, 
\begin{align*}
0  \ &=  \ \de_R(\Nrd_Q(a)) \ = \  
\de_R \big(N_{Z(Q_i)/Z(Q_i)^G}(\Nrd_{Q_i}(a))^m\big)  \ =  \ 
\textstyle\sum\limits_{\sigma \in G} 
\sigma\big(\de_R(\Nrd_{Q_i}(a)^m)\big)\\
 \ &= \  N_G\big(\de_R(\Nrd_{Q_i}(a))^m\big) \ = 
\ m \, N_G(\Nrd(\de_T(a)))
 \ = \ m \,N_G(\Nrd(\al)).
\end{align*}  
Since $\Div(R)$ is  torsion-free, we have
$N_G(\Nrd(\al)) = 0$, i.e., $\Nrd(\al) \in \ker(N_G) = \mfn_0 = I_G(\mfn)$. 
Therefore, as we saw above, there is $c \in
[Q_i^{\ast}, Q^{\ast}] \cap Q_i^*$ with 
$\Nrd(\al) = \Nrd(\de_T(c))$. Let $a' = a/c \in Q_i^{\ast}$. 
Then,
$$ 
\Nrd(\de_T(a'))  \ = \ \Nrd(\de_T(a)) -\Nrd(\de_T(c)) \ 
%% \de_R(\Nrd_{Q_i}(a'))
%%  \ = \  \de_R(\Nrd_{Q_i}(a)) - \de_R(\Nrd_{Q_i}(c))
   = \ \Nrd(\al)- \Nrd(\al)  \, = \,  0.
$$
 Because $\Nrd \colon \Div(T) \ra \Div(R)$ is
injective (see Prop.~\ref{nrdinj}), it follows that $\de_T(a') = 0$
in~$\Div(T)$. Therefore, as $T = Q_{i-1}[x_, \varphi_i]$ and 
$q(T) = Q_i$,
by Prop. ~\ref{kerde} there is $a'' \in
 Q_{i-1}$ with $a'' \equiv a' \ (\text{mod }Q'_i)$. So, 
${a'' \equiv a}$
 $\ (\text{mod }[Q_i^{\ast}, Q^{\ast}])$, and hence $\Nrd_Q(a'') = \Nrd_Q(a)
=1$, i.e., $a'' \in Q_{i-1}^{\ast} \cap Q^{(1)}$. Thus,
${a \in
(Q_{i-1}^{\ast} \cap Q^{(1)})[Q_{i}^{\ast}, Q^{\ast}]}$, proving
(\ref{eqn}).

The inclusion (\ref{eqn}) shows that for any $i$, $1 \leq i \leq n$
and any $a \in Q^{(1)} \cap Q_i$ there is $b \in Q^{(1)} \cap Q_{i
-1}$ with $b \equiv a \ (\text{mod }Q')$. Hence, by downward induction on
$i$, for any $q \in Q^{(1)} = Q^{(1)} \cap Q_n$ there is $d \in Q_0
\cap Q^{(1)} = E_0 \cap Q^{(1)}$ with $d \equiv q 
\ \text{mod }Q')$. So,
$Q^{(1)} \subseteq (Q^{(1)} \cap E_0)Q'$. The reverse inclusion is
clear, completing the proof of Case I.

{ Case II.} {\it Suppose $\Ga_E$ is not a finitely generated
abelian group.}

 The basic point is that $E$ is a direct limit of
sub-graded division algebras with finitely generated grade group, so
we can reduce to Case I. But we need to be careful about the choice
of the sub-division algebras to assure that they have the same index
as $E$, so that the reduced norms are compatible.

Let $F= Z(E)$. Since $| \Ga_E / \Ga_F| < \infty$, there is a finite
subset, say $\{ \ga_1, \ldots, \ga_k \}$ of $\Ga_E$ whose images in $
\Ga_E/ \Ga_F$ generate this group. Let $\De_0$ be any finitely
generated subgroup of $\Ga_E$, and let $\De$ be the subgroup of
$\Ga_E$ generated by $\De_0$ and $\ga_1 , \ldots, \ga_k$. Then,
$\De$ is also a finitely generated subgroup of $\Ga_E$, but with the
added property that $\De + \Ga_F = \Ga_E$. Let
$$ 
E_{\De}  \ = \ \textstyle \bigoplus\limits_{\de \in \De} E_{\de},
$$ 
which is a graded
sub-division ring of $E$, with $E_{\De, 0} = E_0$ and $\Ga_{E_{\De}}
= \De$. Since $\De + \Ga_F = \Ga_E$, we have $E_{\De}F= E$. (For,
take any $\ga \in \Ga_E$ and write $\ga= \de +\eta$ with $\de \in
\De$ and $\eta \in \Ga_F$, and any nonzero $d \in
E_{\De, \de}$ and $c \in F_{\eta}$. Then, $E_{\ga} = dc E_0
\subseteq E_{\De}F$.) Because $E_{\De}F = E$, we have $Z(E_{\De}) =
F \cap E_{\De} = F_{\De \cap \Ga_F}$. Note that
\begin{align*}
[E_{\De}:Z(E_{\De})]  \ &= \  [E_{\De, 0}: F_{{\De \cap\Ga_F},
0}] \, |\Ga_{\De}: (\Ga_{\De} \cap \Ga_{F})| 
 \ = \  [E_0:F_0] \, |( \Ga_{\De} + \Ga_F ): \Ga_F|  \\
&= \ [E_0 : F_0]  \, |\Ga_E : \Ga_F|  \ = \  [E:F] .
\end{align*}
The graded homomorphism $E_{\De} \otimes_{Z(E_{\De})} F \ra E$ is
onto as $E_{\De} F = E$, and is then also injective by dimension
count (or by the graded simplicity of $E_{\De} \otimes_{Z(E_{\De})}
F$). Thus, $E_{\De} \otimes_{Z(E_{\De})} F \cong E$. It follows that
$q(E_{\De}) \otimes_{q(Z(E_{\De}))} q(F) \cong q(E)$. Specifically,
\begin{align*}
q(E_{\De}) \otimes_{q(Z(E_{\De}))} q(F)  \ & \cong  \  (E_{\De}
\otimes_{Z(E_{\De})} q(Z(E_{\De})))\otimes_{q(Z(E_{\De}))} q(F)
 \ \cong \   E_{\De}\otimes_{Z(E_{\De})} q(F)  \\
& \cong  \  (E_{\De} \otimes_{Z(E_{\De})} F) \otimes_F q(F) 
 \  \cong  \  E \otimes_F q(F)  \ \cong \  q(E). 
\end{align*}
Therefore, for any $a \in q(E_{\De})$, $\Nrd_{q(E_{\De})}(a) =
\Nrd_{q(E)}(a)$.

Now, if we take any $a \in Q^{(1)}$ where $Q = q(E)$, there is a
subgroup $\De \subseteq \Ga_E$ with $\De$ finitely generated and
$\De + \Ga_F = \Ga_E$ and $a \in E_{\De}$. Since
$\Nrd_{q(E_{\De})}(a) = \Nrd_Q(a)=1$, we have, by Case~I applied to
$E_{\De}$, $a \in \big(q(E_{\De})^{(1)} \cap E_0\big)q(E_{\De})' \subseteq
(Q^{(1)} \cap E_0)Q'$, completing the proof for Case II.
  \end{proof}

\begin{remark}
%\begin{enumerate}
%\item 
(i)  Prop.~\ref{iso} for those $E$ with $\Ga_E \cong
\mathbb{Z}$ was proved in \cite{py}, and our proof of this is
essentially the same as theirs, expressed in a somewhat different
language. Platonov and {Yanchevski\u\i} also in effect assert 
Prop. ~\ref{iso} for $E$ with $\Ga_E$ finitely generated, expressed
as a result for iterated quotient division rings of twisted
polynomial rings. (See \cite{py}, Lemma~8.) 
%% This is the key part in
%% the proof of Prop. ~\ref{iso} overall. 
By way of proof of
\cite{py}, Lemma~8, the authors say nothing more than that it follows by
induction from the rank 1 case. It is not clear whether the proof
given here coincides with their unstated proof, since the transition
from rank 1 to finite rank is not transparent.

(ii) So far the functor $\CK$ has manifested   properties similar to
$\SK$.  However, the similarity does not hold here, since  
the functor $\CK$ is not (homotopy) stable. In fact, for a 
division algebra~$D$ over its center $F$ of index $n$, one has 
 the following split exact sequence,
$$
1 \rightarrow \CK(D) \rightarrow \CK(D(x)) \rightarrow 
\textstyle\bigoplus\limits_p \mathbb Z/ (n/n_p) \mathbb Z \rightarrow 1
$$
where $p$ runs over irreducible monic polynomials of $F[x]$ and 
$n_p$ is the index of central simple algebra  
$D\otimes_F \big(F[x]/(p)\big)$ 
(see Th.~2.10 in \cite{sk12001}).  This is provable by 
mapping the exact sequence \eqref{locseq} with $T = F[x]$ to 
the sequence for $T = D[x]$ and taking cokernels.
\end{remark}

\begin{example}
Let $E$ be a semiramified graded division ring with $\Gamma_E\cong
\zz^n$, and let $T = Z(E)$.  Since $\Gamma_E/\Gamma_T$ is a torsion
group, there are a base $\{\gamma_1, \ldots, \gamma_n\}$ of the free
abelian group $\Gamma_E$ and some $r_1, \ldots, r_n\in \nn$ such that 
$\{r_1\gamma_1, \ldots, r_n\gamma_n\}$ is a base of $\Gamma_T$.  Choose 
any nonzero $z_i \in E_{\gamma_i}$ and $x_i \in T_{r_i\gamma_i}$, 
$1\le i\le n$.  Let $F= T_0$ and $M = E_0$, and 
let $G = \Gal(M/F)$.  Because $E$ is semiramified,
$M$ is Galois over $F$ with ${[M:F] = |\Gamma_E:\Gamma_T| = \ind(E)
= r_1\ldots r_n}$, and ${G\cong \Gamma_E/\Gamma_T}$.  Since $z_i^{r_i}
\in E_{r_i\gamma_i} = E_0x_i$, there is $b_i\in M$ with $z_i^{r_i}
= b_ix_i$.  Let ${u_{ij} = z_iz_jz_i^{-1}z_j^{-1}\in M}$. Let
$\sigma_i\in G$ be the automorphism of $M$ determined by conjugation
by $z_i$.  From the isomorphism ${G\cong \Gamma_E/\Gamma_T}$, 
each $\sigma_i$ has order $r_i$ in $G$ and $G\cong 
\langle \sigma_1\rangle \times \ldots \times \langle \sigma_n\rangle$. 
Clearly, ${T = F[x_1,x_1^{-1}, \ldots x_n, x_n^{-1}]}$, an iterated
Laurent polynomial ring, and $E = M[z_1, z_1^{-1}, \ldots, z_n,
z_n^{-1}]$, an iterated twisted Laurent polynomial ring whose multiplication 
is completely determined by the $b_i\in M$, the $u_{ij}\in M$, and the action of the 
$\sigma_i$ on $M$.  

Let $D = q(E)$, which is a division ring with center $q(T) = 
F(x_1, \ldots , x_n)$, a rational function field over $F$.
Then, $D$ is the {\it generic abelian crossed product} determined by 
$M/F$, the base $\{\sigma_1,\ldots, \sigma_n\}$ of $G$, the $b_i$ and 
the $u_{ij}$, as defined in \cite{as}.
  As was pointed out in \cite{bm}, all generic abelian crossed products 
arise this way as rings of quotients of semiramified graded division
algebras.  Generic abelian crossed products were used in \cite{as} to 
give the first examples of noncyclic $p$-algebras, and in \cite{s1}
to prove the existence of noncrossed product $p$-algebras.
It is known by \cite{tignol}, Prop.~2.1 that $D$ is determined up to
$F$-isomorphism by $M$ and the $u_{ij}$.    
By Cor.~\ref{skunramthm}(iii) and Th.~\ref{isothm}, there is 
an exact sequence
\begin{equation}\label{semiramseq}
G\textstyle\wedge G  \ \to  \  \widehat H^{-1}(G,M^*) 
 \ \to \  \SK(D) \  \to \  1,
\end{equation}
where the left map is determined by sending
$\sigma_i\wedge \sigma_j$ to $u_{ij}$ mod $I_G(M^*)$.
An important  condition introduced by Amitsur and Saltman in \cite{as} was 
{\it nondegeneracy} of   $\{u_{ij}\}$.  This condition was 
essential  for the noncyclicity results in \cite{as}, and is 
also key to the results on noncyclicity and indecomposability of generic abelian 
crossed products in recent work of 
McKinnie in \cite {mc1}, \cite {mc2} and Mounirh \cite{mou2}.  
The original definition of nondegeneracy in \cite{as} was 
somewhat mysterious.  A cogent characterization was given
recently in \cite{mc3}, Lemma~5.1:  A family $\{u_{ij}\}$ 
in~$M^*$ (meeting the 
conditions to appear in a generic abelian crossed product) is 
nondegenerate iff for every rank~$2$ subgroup $H$ of $G$, the map 
$H\wedge H \to \widehat H^{-1}(H, M^*)$ appearing in the complex
\eqref{semiramseq} for the generic abelian crossed product
$C_D(M^H)$ is nonzero.  In the first nontrivial case, where 
$G\cong \zz_p\times \zz_p$ with $p$ a prime number, we have
$\{u_{ij}\}$ is nondegenerate iff the map $G\wedge G\to 
\widehat H^{-1}(G,M^*)$ is nonzero, iff the epimorphism
$\widehat H^{-1}(G,M^*) \to \SK(D)$ is not injective.  
Thus, the nondegeneracy is encoded in $\SK(D)$, and it occurs
just when $\SK(D)$ is not \lq\lq as large as possible."

\end{example}

\appendix
\section{The Wedderburn factorization theorem} \label{weddapp}

In a division ring, additive and multiplicative commutators play 
important roles and there are extensive results in the literature 
known as commutativity theorems. The main theme in these results 
is that, additive and multiplicative commutators are ``dense'' in 
a division ring.  For example, if an element commutes with all 
additive commutators, then it is already a central element. It 
seems that this 
trend continues  for the additive commutators for a graded 
division ring. However the multiplicative commutators are 
too ``isolated'' to determine the structure of a graded division 
ring.

Let $E$ be  a graded division ring with graded center $T$. A
\emph{homogeneous additive commutator} of $E$ is an element of the
form $ab-ba$ where $a,b \in E^{h}$. We will use the notation 
$\lbr a,b\rbr_{\textrm{ad}} 
= ab-ba$ for $a,b \in E^{h}$ and let $\lbr H, K\rbr_{\textrm{ad}}$ 
be the additive 
group generated by 
$\{ dk-kd : d \in
H^h, k \in K^h\}$ where $H$ and $K$ are graded subrings of $E$.
Parallel to  the theory of division rings, one can show that if all
the homogenous additive commutators of graded division ring $E$ are
central, then $E$ is a graded field. To observe this, one can carry
over the non-graded proof, {\it mutatis-mutandis}, to the graded
setting, see, e.g., \cite{lam}, Prop.~13.4. Alternatively, let $y \in E^h$ be an element which commutes
with homogeneous additive commutators of $E$. Then  $y$ commutes
with all (non-homogeneous) commutators of $E$. Consider $\lbr x_1 ,
x_2\rbr_{\textrm{ad}}$ where $x_1 , x_2 \in q(E)$. Since $q(E)=E\otimes_T q(T)$, 
it follows
that $y\lbr x_1 , x_2 \rbr_{\textrm{ad}} = \lbr x_1 , x_2 \rbr_{\textrm{ad}} y$. So $y$ commutes
 with all
commutators of $q(E)$, a division ring, thus $y \in q(T)$. But $
E^h \cap
q(T) \subseteq T^h$, proving that $y \in T^h$. Thus, $E$~is
commutative. Again parallel to the theory of division rings, one can
prove that if $K \subseteq E$ are graded division rings, with 
$\lbr E,K\rbr_{\textrm{ad}} 
\subseteq K$ and $\chr(K) \neq 2$, then $K \subseteq Z(E)$. However,
for this one it seems there is no shortcut, and one needs to 
carry out a proof similar to the one for ungraded
 division rings, as in  (\cite{lam}, Prop.~3.7).

The  paragraph above shows some similar behavior between the Lie
algebra structure of division rings and that of graded division rings.
However, this analogy often fails for the multiplicative structure 
of graded division algebras. For example, the Cartan-Brauer-Hua
theorem (the multiplicative analogue of the  statement above that  if $K \subseteq E$ are graded division rings, with 
$\lbr E,K\rbr_{\textrm{ad}} 
\subseteq K$ and $\chr(K) \neq 2$, then $K \subseteq Z(E)$) is not
valid in the graded setting. Also, the  multiplicative group $E^*$ 
of a
totally ramified graded division algebra $E$ is nilpotent
(since $E' \subseteq E_0^* =T_0^* \subseteq Z(E^*)$),
while the  multiplicative group of a noncommutative division ring 
is not even
solvable, cf.~\cite{stuth}.
Furthermore,  a totally ramified graded division algebra
 $E^*$ is radical over its
center $T$ (since $E^{*\exp(\Gamma_E/\Gamma_T)} \subseteq T^*$), but 
this is not the case for any non-commutative division
ring (\cite{lam}, Th.~15.15). Nonetheless, one significant
  theorem involving conjugates that can be extended to
the graded setting is the Wedderburn factorization theorem. (This is
used in proving Th.~\ref{normalthm}.)

%% Let $D$ be a division ring over its center $F$ and  $a \in D$ with
%% the minimal polynomial $f(x)$ in $F[x]$. Any conjugate of $a$ is a
%% root of this polynomial. Since the number of conjugates of $a$ is
%% infinite (\cite{lam}, 13.26), this indicates that $f(x)$ might
%% split completely in $D[x]$. In fact, this is the case and it is
%% called the Wedderburn factorization theorem. One can carry over
%% this theorem to the setting of graded division algebra (which is
%% used in proving Th.~\ref{normalthm}). 

\begin{theorem}[Wedderburn Factorization Theorem]\label{domainW}
Let $E$ be a graded division ring with center $T$ (with 
$\Gamma_E$ torsion-free abelian).
Let  $a$ be 
a homogenous element   of $E$ which is algebraic over $T$ with  
minimal 
polynomial $h_a \in T[x]$.  
Then, $h_a$ splits completely in $E$. Furthermore, there exist 
$n$ conjugates $a_{1}, \ldots , a_{n} $  of $a$ such
that  $h_a = (x-a_{n})(x-a_{n-1}) \ldots (x-a_{1})$ in $E[x]$.
\end{theorem}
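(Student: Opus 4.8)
The plan is to bootstrap from the ordinary Wedderburn Factorization Theorem for the quotient division ring $q(E)$ and then descend the factorization into $E$ via the splitting homomorphism $\lambda\colon q(E)^*\to E^*$ of \eqref{lamfun}. Write $\theta=\deg(a)$ and $n=\deg(h_a)$. Recall from \S\ref{prel} that $q(E)=E\otimes_Tq(T)$ is a division ring finite dimensional over its center $q(T)$, and that (as in the proof of Prop.~\ref{normfacts}, using that $T$ is integrally closed with $E$ integral over it, together with \cite{hwalg}, Prop.~2.2) $h_a$ is also the minimal polynomial of $a$ over $q(T)$ and is $\theta$-homogenizable in $T[x]$; thus if $h_a=x^n+t_{n-1}x^{n-1}+\dots+t_0$ then $t_j\in T_{(n-j)\theta}$ whenever $t_j\ne0$. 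Applying the classical Wedderburn Factorization Theorem in $q(E)$ gives $d_1,\dots,d_n\in q(E)^*$, which may be taken with $d_1ad_1^{-1}=a$, such that $h_a=(x-c_n)(x-c_{n-1})\cdots(x-c_1)$ in $q(E)[x]$ with $c_i=d_iad_i^{-1}$. Since $\lambda$ is a group homomorphism restricting to the identity on $E^*$ and $v=\deg\circ\lambda$, every $c_i$ has $v(c_i)=v(a)=\theta$, so $a_i:=\lambda(c_i)=\lambda(d_i)\,a\,\lambda(d_i)^{-1}$ is a homogeneous element of $E_\theta^*$, a conjugate of $a$ by the homogeneous unit $\lambda(d_i)\in E^*$, with $a_1=\lambda(a)=a$.

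It remains to check that this descends to $h_a=(x-a_n)\cdots(x-a_1)$ in $E[x]$. To do this I would put the $\theta$-Gauss filtration on $q(E)[x]$: for $0\ne f=\sum_jb_jx^j$ set $\mu(f)=\min\{v(b_j)+j\theta:b_j\ne0\}$, with leading form $f^{(\theta)}\in E[x]$ defined coefficientwise as in \eqref{flambda} (coefficient of $x^j$ equal to the image of $b_j$ in $E_{\mu(f)-j\theta}$ if $v(b_j)+j\theta=\mu(f)$, and $0$ otherwise). Because $v$ is invariant ($v(cd)=v(dc)$, as $\lambda$ is multiplicative) and $\gr(q(E))\conggr E$ is a graded division ring and hence a domain, the associated graded ring of this filtration is the domain $E[x]^\theta$ of \eqref{homogenizable}, so $\mu(fg)=\mu(f)+\mu(g)$ and $(fg)^{(\theta)}=f^{(\theta)}g^{(\theta)}$ for all nonzero $f,g$; that is, no cancellation of leading terms can occur. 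Now $\mu(x-c_i)=\theta$ with both terms active, so $(x-c_i)^{(\theta)}=x-\lambda(c_i)=x-a_i$; and since $h_a$ is $\theta$-homogenizable with the $t_j$ already homogeneous in $T$, one has $\mu(h_a)=n\theta$ and $h_a^{(\theta)}=h_a$. Passing to leading forms in $h_a=\prod_{i=1}^n(x-c_i)$ therefore gives $h_a=\prod_{i=1}^n(x-a_i)$ in $E[x]$, as required.

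The bookkeeping for $f^{(\theta)}$ and the invariance of $v$ are routine; the one step that carries real weight is the multiplicativity $(fg)^{(\theta)}=f^{(\theta)}g^{(\theta)}$, which is precisely where the absence of zero divisors in $\gr(q(E))=E$ enters. If one prefers a self-contained argument not invoking the ungraded theorem, one can instead adapt its proof directly by induction on $n$: write $h_a=g(x)(x-a)$ with $g\in E[x]$ monic (legitimate since $h_a$ has central coefficients and $h_a(a)=0$), use the standard fact that if $b$ is a right root of a product $f_1f_2$ and $c:=(f_2)_r(b)\ne0$ then $cbc^{-1}$ is a right root of $f_1$ to peel off one linear factor at a time, and observe — via $\lambda$ again — that any homogeneous element of $E$ with minimal polynomial $h_a$ is automatically a conjugate of $a$ by a homogeneous unit, since applying $\lambda$ to a conjugating element in $q(E)^*$ yields a homogeneous one. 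Homogeneity of all the $a_i$ is forced by $\lambda$ in either approach.
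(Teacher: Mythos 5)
Your proof is correct, but it takes a genuinely different route from the paper's. The paper adapts Wedderburn's original argument directly inside $E[x]$: it proves the evaluation identity for products (I), the right-factor theorem (II), and the key graded step (III) that a monic polynomial vanishing on the whole conjugacy class of $a$ has degree at least $\deg(h_a)$ (where one reduces to coefficients in the appropriate homogeneous components $E_{i\gamma}$), and then peels off right-hand linear factors until (III) forces the factorization to exhaust $h_a$. You instead invoke the classical ungraded theorem in $q(E)$ and descend the factorization to $E[x]$ via the splitting homomorphism $\la$ of \eqref{lamfun} combined with multiplicativity of leading forms for the $\theta$-Gauss filtration on $q(E)[x]$; the latter is legitimate precisely because the associated graded ring $E[x]^{\theta}$ is a domain, and the endpoints match because $h_a$ is $\theta$-homogenizable (so $h_a^{(\theta)}=h_a$) and $\gr(q(E))\conggr E$, both available from \S\ref{prel}. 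Your reduction is shorter and fits the paper's general philosophy of transferring statements between $E$ and $q(E)$, and it makes the homogeneity of the conjugating elements transparent ($\la(d_i)\in E^*$); the paper's self-contained route avoids the quotient-ring and filtration machinery and produces intermediate lemmas that are reused immediately afterward (e.g., in the Dickson-theorem remark). One small inaccuracy: the invariance $v(cd)=v(dc)$ plays no role in the multiplicativity of the leading-form map here, since $x$ is central in $q(E)[x]$ and no conjugation of coefficients occurs; what carries the weight is exactly the absence of zero divisors in $E[x]^{\theta}$, as you correctly identify.
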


\begin{proof} The proof is similar to Wedderburn's  original proof 
for a division ring (\cite{wedd}, 
see also~\cite{lam} for a nice account of the proof). 
We sketch the proof for the convenience of the reader.
For $f = \sum c_i x^i\in E[x]$ and $a\in E$, our convention is 
that $f(a)$ means $\sum c_ia^i$. Since $\Gamma_E$ is torsion-free,
we have $E^* = E^h\setminus\{0\}$.
\begin{description}
\item[I] Let $f \in E[x]$ with factorization  $f=gk$ in  $E[x]$. 
If $a\in E$ such that
$k(a) \in T\cdot E^*$, then $f(a) =
g(a')k(a)$, for some conjugate  $a'$ of $a$.
(Here $E$ could be  any ring with $T\subseteq Z(E)$.)
\end{description}

\begin{proof}
Let $g = \sum b_i x^i$. Then, $f = \sum b_i k x^i $, so
$f(a) = \sum b_i k(a) a^i$. But, $k(a)=te$, where $t \in T$ and 
$e\in E^*$. Thus,  $f(a)=\sum b_i te a^i = \sum
b_i e a^i e^{-1} te = \sum b_i (e a e^{-1})^{i} te = g(ea
e^{-1})k(a)$.
\end{proof}

\begin{description}
\item[II]
Let $f \in E[x]$ be a non-zero polynomial (here $E$ could be, 
in fact,  any ring). Then $r \in E$ is a root of $f$ if and
only if $x-r$ is a right divisor of $f$ in $E[x]$.
(Here $E$ could be  any ring.)
\end{description}

\begin{proof}
We have $x^i-r^i = (x^{i-1} + x^{i-2}r + \ldots + r^{i-1})(x-r)$
for any $i\ge1$.  Hence, 
\begin{equation}\label{x-r}
f - f(r) = g\cdot (x-r) 
\end{equation} for some 
$g\in E[x]$.  So, if $f(r)= 0$, then $f = g\cdot(x-r)$.
Conversely, if $x-r$ is a right divisor of $f$, then
equation~\eqref{x-r} shows that $x-r$ is a right divisor of the 
constant $f(r)$.  Since $x-r$ is monic, this implies that $f(r) = 0$. 
\end{proof}

\begin{description}
\item[III]
If a non-zero monic polynomial $f \in E[x]$
vanishes identically on the conjugacy class $A$ of~$a$ (i.e., 
$h(b)=0$ for all  $b \in A$),then $\deg(f) \geq \deg(h_a)$.
\end{description}

\begin{proof}
Consider $f = x^m + d_1
x^{m-1} + \ldots + d_m \in E[x]$  such that $f(A)=0$ and $m <
\deg(h_a)$ with $m$ as small as possible. 
Suppose $a\in E_\gamma$, so $A\subseteq E_\gamma$, as
the units of $E$ are all homogeneous. Since the 
$E_{m\gamma}$-component
of $f(b)$ is $0$ for each $b\in A$, we may assume that each
$d_i \in E_{i\gamma}$. 
Because $f \notin T[x]$,
some $d_i \notin T$. 
Choose $j$ minimal with $d_j\notin T$, and some  $e \in E^*$
such that $e d_j \neq d_j e$. For any $c \in E$, write $c' :=
ece^{-1}$. Thus $d'_j \neq d_j$ but $d_\ell' = d_\ell$
for $\ell<j$. Let 
$f' = x^m +d_1'x^{m-1} +\ldots +d_m' \in E[x]$.
 Now, for all $b \in A$, we have  $f'(b') =[f(b)]'= 0' = 0$. Since, 
$eAe^{-1} = A$, this shows that $f'(A) = 0$.  Let $g = f-f'$, 
which has degree $j < m$ with leading coefficient $d_j-d_j'$.  
Then, $g(A) = 0$.  But, $d_j-d_j' \in E_{j\gamma}\setminus \{0\}
\subseteq E^*$.  Thus, $(d_j-d_j')^{-1}g$ is monic of degree $j <m$
in $E[x]$, 
and it vanishes on $A$.  This contradicts the choice of $f$; hence,
$m\ge \deg(h_a)$. 
\end{proof}

We now prove the theorem. Since $h_a(a)=0$, by (II),  
$h_a \in E[x]\cdot(x-a)$. Take a factorization
$$
h_a = g \cdot (x-a_r) \ldots (x-a_1)
$$
where $g \in E[x]$, $a_1, \ldots, a_r \in A$ and $r$ is as large
as possible. Let $k = (x-a_r) \ldots (x-a_1)\in E[x]$. 
We claim that
$k(A) = 0$, where $A$ is the conjugacy class of $a$. For,
suppose there exists $b \in A$ such that $k(b) \neq
0$.  Since $k(b)$ is homogenous, we have $k(b) \in E^*$. 
But, $h_a = gk$, and $h_a(b) = 0$, as $b\in A$; hence,   (I)
implies that $g(b')=0$ for some  conjugate $b'$ of $b$. We can then
write $g = g_1 \cdot (x-b')$, by (II). So $h_a$ has a right factor
$(x-b')k = (x-b')(x-a_r) \ldots (x-a_1)$, contradicting our choice 
of $r$.
Thus $k(A)=0$, and using (III), we have $r\ge\deg(h_a)$, which says 
that
$h_a = (x-a_r) \ldots (x-a_1)$.
\end{proof}

\begin{remark}[Dickson Theorem]
One can also see that, with the same assumptions as in
Th.~\ref{domainW}, if $a, b \in E$ have the same minimal
polynomial $h\in T[x]$, then $a$ and $b$ are conjugates. For,
$h=(x-b)k$ where $k \in T[b][x]$. But then by (III),
there exists a conjugate of $a$, say $a'$,  such that $k(a') \not
=0$. Since $h(a')=0$,  by (I) some  conjugate of $a'$ is a root
of $x-b$. (This is also deducible using the graded version of the 
Skolem-Noether theorem, see \cite{hwcor}, Prop.~1.6.)
\end{remark}

\section{The Congruence theorem for 
 tame division algebras}\label{apencongru}

For a valued division algebra $D$, the  congruence
theorem provides a bridge for relating the reduced Whitehead group of
$D$ to the reduced Whitehead group of its residue division
algebra. This was used by Platonov \cite{platonov}  to produce
non-trivial examples of $\SK(D)$, by carefully choosing $D$ with a
suitable residue division algebra. Keeping the notations of
Section ~\ref{prel}, Platonov's congruence theorem states that for
a division algebra $D$ with a complete discrete valuation of rank
1, such that $Z(\ov D)$ is separable over $\ov F$,  $(1+M_D)\cap
D^{(1)} \subseteq D'$.  This crucial theorem was established with
a lengthy and rather complicated proof in \cite{platonov}. In
\cite{ershov}, Ershov states that the ``same'' proof will go
through for  tame valued division algebras over henselian fields.
However, this seems highly problematical, as Platonov's 
original proof used properties of maximal orders over 
discrete valuation rings which have no satisfactory analogues
for more general valuation rings.  
For the case of strongly tame division algebras, i.e.,  $\chr (\ov
F) \nmid [D:F]$, a short proof 
of the congruence theorem was  given in \cite{h} and another
(in the case of discrete  rank 1 valuations) in \cite{s}. In this
appendix, we provide a complete proof for the general
situation of a tame valued division algebra.

\begin{thm}[Congruence Theorem]\label{congru}
Let $F$ be a field with a henselian valuation $v$, and let~$D$ be a
tame $F$-cental division algebra. Then $(1 + M_D ) \cap D^{(1)}
\subseteq D'$.
\end{thm}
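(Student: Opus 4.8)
The plan is to derive the theorem from two special cases, after using the reduced norm to reduce to a torsion question. Fix $a\in(1+M_D)\cap D^{(1)}$ and put $n=\ind(D)$. The first step is the computation in the proof of Prop.~\ref{normalthm}, applied now to $D$ itself (via the classical Wedderburn Factorization Theorem \cite{wedd}) with $N$ the normal subgroup of $D^{\ast}$ generated by $a$: since $1+M_D$ is normal in $D^{\ast}$ we have $N\subseteq 1+M_D$, and one gets $\Nrd_D(a)=a^{n}d_a$ with $d_a\in[D^{\ast},1+M_D]\subseteq D'$. As $\Nrd_D(a)=1$, this gives $a^{n}\in D'$, so the class of $a$ in $D^{\ast}/D'$ is $n$-torsion; thus it suffices to kill its $p$-primary part (for $p=\chr(\ov F)$) and its prime-to-$p$ part separately. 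Write $n=p^{k}m$ with $p\nmid m$ (and $p^{k}=1$ if $\chr(\ov F)=0$). Using the primary decomposition of $D$ in the Brauer group together with coprimality of indices, I would write $D\cong D_{1}\otimes_{F}D_{2}$ with $D_{1},D_{2}$ division, $\ind(D_{1})=m$ and $\ind(D_{2})=p^{k}$; by the characterizations of tameness recalled in \S\ref{prel}, $D_{1}$ is strongly tame while $D_{2}$ is tame and split by the maximal unramified extension of $F$. The goal is then to reduce the Congruence Theorem for $D$ to its analogues for $D_{1}$ and $D_{2}$, using $V_{D}=V_{D_{1}}\otimes_{V_{F}}V_{D_{2}}$ to express $1+M_D$ through the two factors, the reduced-norm formula $\Nrd_D(x\otimes y)=\Nrd_{D_1}(x)^{p^{k}}\Nrd_{D_2}(y)^{m}$, and $\Nrd_D(1+M_D)=1+M_F$ from Cor.~\ref{Dnormsurj}.

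For the strongly tame factor $D_{1}$ the argument is soft. By \cite{haz}, Lemma~2.1 (which applies because $\chr(\ov F)\nmid\ind(D_{1})$), we have $1+M_{D_{1}}=(1+M_{F})\,[D_{1}^{\ast},1+M_{D_{1}}]$. Given $a\in(1+M_{D_{1}})\cap D_{1}^{(1)}$, write $a=zd$ with $z\in1+M_{F}$ and $d\in[D_{1}^{\ast},1+M_{D_{1}}]$ (so $\Nrd_{D_{1}}(d)=1$); then $1=\Nrd_{D_{1}}(a)=z^{m}$, and since $z\in1+M_{F}$ and the image of $x^{m}-1$ in $\ov F[x]$ is separable (as $\chr(\ov F)\nmid m$), Hensel's lemma forces $z=1$, whence $a=d\in D_{1}'$. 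Equivalently, the map $(1+M_{D_{1}})/[D_{1}^{\ast},1+M_{D_{1}}]\to 1+M_{F}$ induced by $\Nrd_{D_{1}}$ is injective; this is essentially the content of \cite{h}.

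The inertially split factor $D_{2}$ is the heart of the matter, and I expect it to be the main obstacle. Here there is no soft argument: $1+M_{D_{2}}$ modulo $[D_{2}^{\ast},1+M_{D_{2}}]$ genuinely has torsion — a primitive $p$-th root of unity may lie in $1+M_{F}$ — so one cannot simply invoke injectivity of a reduced-norm map as above. The plan is to use the structure theory of tame division algebras split by $F^{\mathrm{nr}}$ — their maximal subfields and crossed-product-type presentations relative to $Z(\ov{D_{2}})/\ov F$ and $\Gamma_{D_{2}}/\Gamma_{F}$, in the spirit of \cite{jw} — to represent an arbitrary element of $(1+M_{D_{2}})\cap D_{2}^{(1)}$ modulo $D_{2}'$ by finitely many homogeneous commutators of the generators, and then to show by direct computation (this is the part that replaces Platonov's maximal-order arguments) that having reduced norm $1$ forces such a representative to vanish. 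Combining the two cases with the reduction of the first paragraph yields $(1+M_{D})\cap D^{(1)}\subseteq D'$.
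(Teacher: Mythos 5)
Your high-level architecture is the right one and matches the paper's: split $D$ into its $p$-primary component and the prime-to-$p$ part (which is strongly tame), prove the theorem for each, and combine. Your treatment of the strongly tame factor is correct and is a legitimate variant of the paper's Step~2 (the paper uses the Wedderburn factorization to get $\Nrd_D(s)\equiv s^n \pmod{D'}$ and then extracts an $n$-th root by Hensel; your route through \cite{haz}, Lemma~2.1 reaches the same conclusion). But the proposal has two genuine gaps, and together they constitute most of the actual content of the theorem.

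First, the combination step is not an argument. Knowing the theorem for $D_1$ and $D_2$ does not yield it for $D=D_1\otimes_F D_2$ by decomposing $V_D$ as $V_{D_1}\otimes_{V_F}V_{D_2}$: an element of $1+M_D$ is not a decomposable tensor, and even for decomposable tensors the condition $\Nrd_{D_1}(x)^{p^k}\Nrd_{D_2}(y)^{m}=1$ does not force each factor into its own $D_i^{(1)}$. The paper's Step~3 handles this quite differently: for $L$ a maximal subfield of one factor, it passes to $L\otimes_F D\cong M_\ell(C)$ with $C\cong Q_L$ the centralizer, proves (Prop.~\ref{prophelp}, via a splitting base for the valuation and explicit row reduction inside the order $1+J$) that $\ddet(1\otimes a)\in 1+M_C \pmod{C'}$, applies the inductive hypothesis to $C$ to conclude $1\otimes a\in (L\otimes_F D)'$, and then uses the regular-representation Lemma~\ref{lemmaone} to get $a^{\ell}\in D'$; doing the same with a maximal subfield of the other factor gives $a^{k}\in D'$ with $\gcd(\ell,k)=1$. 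Note that this requires the theorem for $P_L$ and $Q_L$ for \emph{all} subfields $L$, not just for $P$ and $Q$ — a strengthening of the induction that your reduction does not set up. Second, for the inertially split $p$-primary factor you offer only a plan ("represent \dots by finitely many homogeneous commutators \dots and show by direct computation that reduced norm $1$ forces such a representative to vanish"), and you correctly identify this as the main obstacle — but this is precisely Platonov's hard case and the heart of the theorem. The paper's Step~1 fills it with a full page of argument: push $a$ into a maximal unramified subfield $K$ by a Hensel/Skolem--Noether conjugation, pass to a prime-to-$p$ extension so that the normal closure $N$ of $K$ has $p$-power degree (using Lemma~\ref{lemmaone} to descend), then induct on index through a degree-$p$ cyclic subextension $T$ using Hilbert~90, the norm surjectivity $N_{K/T}(1+M_K)=1+M_T$ of Prop.~\ref{normsurj}, and the centralizer $E=C_D(T)$. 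Without that argument (or a worked-out substitute) the proposal does not prove the theorem.
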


Tameness is meant here, as in the main body of the article, in the
weaker sense used in \cite{jw} and \cite{ershov}.
Among the several characterizations of tameness mentioned
in \S\ref{prel}, the ones we use here are that  $D$ is
tame if and only if $D$ is split by the maximal tamely ramified
extension of $F$, if and only if $\chr (\ov{F}) = 0$ or
$\chr(\ov{F}) = \ov p \neq 0$ and the $\ov{p}$- primary component
of $D$ is inertially split, i.e., split by the maximal unramified
extension of $F$.

The proof of the theorem will use the following well-known lemma:
\begin{lemma} \label{lemmaone} Let $D$ be a division ring with 
center $F$ and let $L$ be a
field extension of $F$ with $[L:F]= \ell $. If $a
\in D$ and $a \otimes 1 \in (D \otimes_F L)'$, then $a^{\ell} \in
D'$.
\end{lemma}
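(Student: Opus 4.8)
The plan is to exploit the standard embedding of $D\otimes_F L$ into a matrix ring over $D$ coming from the regular representation of $L$, and then to read off the conclusion from the Dieudonn\'e determinant. First one disposes of the trivial case $\ell = 1$ and assumes $\ell\ge 2$. Viewing $L$ as an $\ell$-dimensional $F$-vector space, left multiplication gives a unital $F$-algebra homomorphism $\rho\colon L \ra \End_F(L)\cong M_\ell(F)$; tensoring it with $D$ over $F$ produces a unital $F$-algebra homomorphism
$$
\Phi \ = \ \id_D\otimes\rho\colon \ D\otimes_F L \ \lra \ D\otimes_F M_\ell(F)\ \cong\ M_\ell(D),
$$
under which the element $a\otimes 1$ is carried to the scalar matrix $aI_\ell = \mathrm{diag}(a,\dots,a)\in M_\ell(D)$.

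Next I would note that, being a ring homomorphism, $\Phi$ maps the unit group $(D\otimes_F L)^*$ into $\GL_\ell(D) = M_\ell(D)^*$, hence maps the commutator subgroup $(D\otimes_F L)'$ into $\GL_\ell(D)'$. Since by hypothesis $a\otimes 1\in (D\otimes_F L)'$, we get $aI_\ell\in \GL_\ell(D)'$. Now apply the Dieudonn\'e determinant $\ddet\colon \GL_\ell(D)\to D^*/D'$: it is a group homomorphism onto the abelian group $D^*/D'$, so its kernel contains $\GL_\ell(D)'$, while $\ddet(aI_\ell) = a^\ell D'$ because the Dieudonn\'e determinant of a diagonal matrix is the product of its diagonal entries modulo $D'$. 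Therefore $a^\ell D' = 1$ in $D^*/D'$, i.e.\ $a^\ell\in D'$.

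The argument is essentially routine and I anticipate no real obstacle. The only points that need a little care are the identification $D\otimes_F M_\ell(F)\cong M_\ell(D)$ together with the bookkeeping showing that $a\otimes 1$ becomes the scalar matrix $aI_\ell$, and the two standard facts about the Dieudonn\'e determinant invoked above (that it is a homomorphism to $D^*/D'$, and that it sends $\mathrm{diag}(a,\dots,a)$ to $a^\ell$), all of which may simply be quoted from the literature.
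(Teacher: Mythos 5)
Your argument is correct and is essentially identical to the paper's own proof: both embed $D\otimes_F L$ into $M_\ell(D)$ via the regular representation of $L$, send $a\otimes 1$ to the scalar matrix $\mathrm{diag}(a,\dots,a)$, and apply the Dieudonn\'e determinant, whose target $D^*/D'$ is abelian so that its kernel contains the image of $(D\otimes_F L)'$. No changes needed.
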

\begin{proof} The regular representation $L \rightarrow M_\ell(F)$
yields a ring monomorphism $D\otimes_F L\to M_\ell(D)$. 
Therefore, we have a composition of group homomorphisms
$$
(D\otimes_F L)^* \rightarrow \GL_{\ell}(D) \rightarrow D^*/D',
 \;\;\;
a\mapsto
\left(
\begin{smallmatrix}
a & 0 & \dots & 0\\
0 & a & \dots & 0\\
\vdots & \vdots & \ddots & \vdots\\
0 & 0 & \dots & a
\end{smallmatrix}
\right)_{\ell \times \ell} \mapsto a^{\ell}D',
$$
where the second map is the Dieudonn\'e determinant.
(See \cite{draxl}, \S20 for properties of the Dieudonn\'e
determinant.)
The lemma follows at once, since the image of the composition is 
abelian, so its kernel contains
$(D\otimes_F L)'$.
\end{proof}

Note that in the preceding lemma, there is no valuation
present, and  $D$ could be of infinite dimension over $F$.

%For a proof of the next lemma, see for example Prop.~2 in \cite{ershov}.

%\begin{lemma}\label{lemmatwo}
%If $F$ is a field with henselian valuation and $K$ is an unramified
%finite degree extension field of $F$, then $N_{K/F}(1+M_K) = 1+M_F$.
%\end{lemma}

\begin{proof}[Proof of Theorem ~\ref{congru}]  The proof is carried out in 
four steps.

{Step 1.} We prove the theorem if $D$ is inertially split of prime power
degree over $F$. This is a direct adaptation of Platonov's argument
in \cite{platonov} for discrete (rank~1) valuations. (When $v$ is discrete,
every tame division algebra is inertially split.)

Suppose $\ind(D) = p^k$, $p$ prime and $D$ is inertially split.
Then, $D$ has a maximal subfield~$K$ which is unramified over $F$
(cf.~\cite{jw}, Lemma~5.1, or \cite{wadval}, Th.~3.4) Take any 
${a \in (1+M_D) \cap D^{(1)}}$. We first 
push $a$ into $K$. Since $\ov K$ is separable over $\ov F$, there is
$y \in \ov K$ with $\ov K = \ov F( y)$. Choose
any $z \in V_K$ with $\ov z = y$. So $K= F(z)$, by dimension
count, as $\ov{F(z)} \supseteq \ov F(y)$. Note that $\ov{az}
= \ov z$ in $\ov D$. If $f$ is the minimal polynomial of $az$ over
$F$, then $f \in V_F[x]$ as $az \in V_D$, and $\ov z = \ov{az}$ is a
root of the image $\ov f$ of $f$ in $\ov F[x]$. We have $\deg (\ov
f) = \deg (f) = [F(az):F] \leq [K:F] = [\ov F( \ov z): \ov F]$.
Hence, $\ov f$ is the minimal polynomial of $\ov z$ over $\ov F$, so
$\ov z$ is a simple root of $\ov f$. By Hensel's lemma applied over
$K$, $K$ contains a root $b$ of~$f$ with $\ov b = \ov z$. Since $b$
and $az$ have the same minimal polynomial $f$ over $F$, by Skolem--Noether there is $t \in D^{\ast}$ with $b= t az t^{-1}$. So $az =
t^{-1} b t$. Then, 
$$
a  \ = \  t^{-1} b t z^{-1}  \ = \  (t^{-1} b t b^{-1})(b
z^{-1}).
$$ 
We have $b z^{-1} \in K$, as $b,z \in K$, and  $b z^{-1}
\equiv a \ (\text{mod }D')$; so, $\Nrd_D(b z^{-1}) = \Nrd_D(a) =1$, and $b
z^{-1} \in 1+M_D$, as $\ov b = \ov z$. Therefore, we may replace $a$
by $b z^{-1}$, so we may assume $a \in K$.

Let $N$ be the normal closure of $K$ over $F$, and let $G =
\Gal(N/F)$. 
Since $K$ is unramified over $F$ and the 
maximal unramified extension $F_{\text{nr}}$ of $F$ is Galois over 
$F$ (cf.~\cite{EP}, Th.~5.2.7, Th.~5.2.9, pp.~124--126), 
$N\subseteq F_{\text{nr}}$; so $N$ is also
unramified over $F$. 
Let $P$ be a $p$-Sylow subgroup of $G$ and let
$L = N^P$, the fixed field of $P$. Thus, $[L:F] = |G:P|$, which is
prime to $p$, and $N$ is Galois over $L$ with $\Gal(N/L) = P$.
Since $\gcd \big([L:F], \ind(D)\big) =1$, $D_1 = L \otimes_F D$
is a division ring and $K_1 = L \otimes_F K$ is a field 
%% unramified over $L$, 
with $K_1 \cong L\cdot K \subseteq N$. So, $K_1$ is unramified over $F$
and hence over $L$.
 We have $\Nrd_{D_1}(1
\otimes a) = \Nrd_D(a) = 1$ and $1 \otimes a \in 1 +M_D$, so if we
knew the result for $D_1$, we would have $1 \otimes a \in D_1'$. But
then by Lemma ~\ref{lemmaone}, $a^{[L:F]} \in D'$. But we also have
$a^{\ind(D)} \in D'$, since $\SK(D)$ is $\ind(D)$-torsion
(by \cite{draxl}, p.~157, Lemma~2 or Lemma~\ref{lemmaone} above
with $L$ a maximal subfield of $D$). Since
$\gcd\big([L:F], \ind(D)\big) =1$, it 
would follow that $a \in D'$, as desired.
Thus, it suffices to prove the result for $D_1$.

To simplify notation, replace $D_1$ by $D$, $K_1$ by $K$, $1
\otimes a$ by $a$, and $L$ by $F$. Because $F \subseteq K \subseteq
N$ with $N$ Galois over $F$ 
%% and $[N:F]$ a power of $p$, 
any subfield
$T$ of $K$ minimal over $F$ corresponds to a maximal subgroup of
$\Gal (N/F)$ containing $\Gal(N/K)$. Since 
$[N:F]$ is a power of $p$, by $p$-group theory such a
maximal subgroup is normal in $\Gal(N/F)$ and of index $p$. Thus,
$T$~is Galois over $F$ and $[T:F] = p$. So $\Gal(T/F)$ is a cyclic
group , say $\Gal(T/F) = \left< \sigma \right>$. Let $E= C_D(T)$,
so $F \sbeq T \sbeq K \sbeq E \sbeq D$. Note that $K$ is a maximal
subfield of $E$, since it is a maximal subfield of $D$.

Let $c = N_{K/T}(a)= \Nrd_E(a)$. Because $K$ is unramified over $T$
and $a \in V_K$, we
have $c \in V_T$ and $\ov c = N_{\ov K / \ov T}(\ov a) = N_{\ov K /
\ov T}(\ov 1) =\ov 1$, so $c \in 1+M_T$. We have $N_{T/F}(c) =
N_{T/F}(N_{K/T}(a)) = N_{K/F}(a) = \Nrd_D(a) = 1$. By Hilbert 90, $c
= b/\sigma(b)$ for some $b \in T$. This equation still holds if we
replace $b$ in it by any $F^{\ast}$-multiple of $b$. Thus, as $\Ga_T =
\Ga_F$ since $T$ is unramified over $F$, we may assume that $v(b) =
0$. But further, since $T$ is unramified and cyclic Galois over $F$,
its residue field $\ov T$ is cyclic Galois of degree $p$ over $\ov F$, with 
$\Gal(\ov T/ \ov F)= \left< \ov{\sigma} \right>$ where $\ov{\sigma}$ is
the automorphism of~$\ov T$ induced by $\sigma$ on $T$. In $\ov T$
we have $\ov b / \ov{\sigma}(\ov b) = \ov{b / \sigma(b)} = \ov c =
\ov 1$. Therefore, $\ov b$ lies in the fixed field of $\ov{\si}$ in $\ov
T$, which is $\ov F$ . Hence, there is $\eta \in V_F$ with
$\ov{\eta} = \ov b$ in $\ov T$. By replacing $b$ by $b \eta^{-1}$,
we can assume $ \ov b = \ov 1$, i.e., $b \in 1 +M_T$.

Since $K$ is unramified and hence tame over $T$,  Prop. ~\ref{normsurj} 
shows $N_{K/T}(1+M_K) = 1+M_T$. So, there is $s \in 1+M_K$ with
$N_{K/F}(s) = b$. Now, by Skolem--Noether, there is an inner
automorphism $\varphi$ of $D$ such that $\ph(T) =T$ and $\ph |_T =
\si$. Since $E= C_D(T)$, we have $\ph$ is a (non-inner) automorphism
of $E$, and $\ph(K)$ is a maximal subfield of $E$ (since $K$ is a
maximal subfield of $E$). We have $\Nrd_E(\ph (s)) = N_{\ph (K)/\ph
(T)} (\ph(s)) = \ph \big(N_{K/T}(s)\big) = \si (b)$. Thus, 
$$
\Nrd_E (s/ \ph(s))  \ = \  b / \si (b)  \ = \  c.
$$ 
Now, as $\ph$ is inner, there is $u \in
D^{*}$ with $\ph (s) = u s u^{-1}$. So, $\ph (s) \in 1 + M_D$. Let
${a' = a/ (s/ \ph(s)) = a\big/ (sus^{-1} u^{-1}) \in E}$. So 
$a' \equiv a \ (\text{mod }D')$. 
But further, $a' \in E \cap (1+M_D) = 1+M_E$ (as $a,s,
\ph(s) \in (1+M_D)\cap E\,)$. Also, 
$$ 
\Nrd_E (a')  \ = \  \Nrd_E (a) \big / \,\Nrd_E (s/ \ph (s)) 
 \ = \  N_{K/T}(a) / c  \ = \  1.
$$
Since $[E:T] < [D:F]$ and $E$ is inertially split over $T$ (since it
is split by its maximal subfield~$K$ which is unramified over $T$),
by induction on index the theorem holds for $T$ over~$E$. Hence,
$a' \in E'$. Since $a \equiv a' \ (\text{mod }D')$, we thus have $a \in D'$,
as desired. This completes the proof of Step 1.

\smallskip

{Step 2.} The theorem is true if $D$ is strongly tame over $F$, i.e.,
$\chr (\ov F) \nmid [D:F]$.
This has a short proof given in \cite{h} and another
 (in the case of discrete valuation of rank 1) in \cite{s},
Lemma 1.6.  For the convenience of the reader, we recall the 
argument from \cite{h}:

Let $n = \ind(D)$, so $\chr(\ov F)\nmid  n$. Take any $s\in D^*$, and let 
$f = x^k +c_{k-1}x^{k-1} + \ldots + c_0\in F[x]$ be the minimal
polynomial of $s$ over $F$.  By applying 
 the Wedderburn factorization theorem to~$f$
(see \cite{lam}, (16.9), pp.~251--252, or Appendix A above), we see 
that there  exist $d_1, \ldots, d_k
\in D^*$ with $(-1)^k c_0 = (d_1sd_1^{-1}) \ldots (d_ksd_k^{-1})$.  
Hence, as $D^*/D'$ is abelian,   
\begin{equation} \label{npower}
\Nrd_D(s) \ = \  [(-1)^kc_0]^{n/k} \ \equiv  
\ \big[s^k(d_1sd_1^{-1}s^{-1}) \ldots (d_ksd_k^{-1}s^{-1})\big]^{n/k}
 \ \equiv  \ s^n \ \ (\text {mod } D').
\end{equation}
Now, take any $a \in 1+M_D$ with $\Nrd_D(a) = 1$.  Since 
$\chr(\ov F) \nmid n$, Hensel's Lemma applied over $F(a)$ shows 
that there is $s \in 1+M_F(a) \subseteq 1+M_D$ with $s^n = a$.
Then, $\Nrd_D(s) = 1+m \in 1+M_F$ by Cor.~\ref{Dnormsurj}.  But, 
$$
(1+m)^n \ = \ \Nrd_D(a^n) \ = \ \Nrd_D(s) \ = \ 1.
$$
If $m \ne 0$, then we have $1 = (1+m)^n = 1 +n\, m + r$
with $v(r) \ge 2v(m)$, which would imply that $v(n\, m) = 
v(r) > v(m)$.  This cannot occur since $\chr(\ov F) \nmid  n$;
hence, $m = 0$.
Thus, by~\eqref{npower}
$$ 
a \ = \ s^n \ \equiv \ \Nrd_D(s) \ = \ 1+m  \ = \ 1 \ (\text{mod }
D'),
$$
i.e., $a\in D'$.  This completes Step 2.

\smallskip 

{Step 3.} Suppose $D= P \otimes_F Q$, where $\gcd(\ind (P),
\ind(Q)) =1$, and suppose the theorem is true for $P_L$ and $Q_L$ 
for any subfield $L$ of $D$, $L \speq F$, where 
$P_L$ (resp.~$Q_L$) is the division algebra Brauer equivalent
to $L\otimes_F P$ (resp.~ $L\otimes _F Q$). Then we 
show using Prop. ~\ref{prophelp} below that the theorem is true 
for $D$.

Let $L$ be a maximal subfield of $P$, and let  
$C = C_D(L)$. Then,   $C\cong C_L(P) \otimes_F Q= L \otimes_F Q$;
since $C$ is a division ring, $Q_L\cong C$.
Also, 
$$
L\otimes_F D  \ \cong  \ (L\otimes _F P) \otimes _L (L\otimes _F Q)
 \ \cong  \ M_\ell(L) \otimes_L C  \ \cong \  M_\ell(C),
$$ 
 where $\ell = [L:F] = \ind(P)$. 
Take any $a \in (1+M_D)\cap D^{(1)}$. For $1
\otimes a \in L \otimes D = M_{\ell}(C)$,  Prop.~\ref{prophelp}
shows that
there is $c \in 1+M_C$ with $\ddet(a) \equiv c \ (\text{mod }C')$,
where $\ddet$ denotes the Dieudonn\'e determinant. Then, 
$$
1 \ =  \ \Nrd_D(a)  \ = \  \Nrd_{M_{\ell}(C)}(1 \otimes a) 
 \ =  \ \Nrd_{C} (\ddet (1\otimes a))  \ =  \ \Nrd_C(c).
$$ 
Hence, $c \in (1+M_C) \cap C^{(1)}$ which
 lies in $C'$ by hypothesis as $C = Q_L$. That is, ${\ddet (1\otimes
a) = 1 \in C^*/C'}$. Hence, $1 \otimes a \in \ker (\ddet) =
(L\otimes_F D)'$. Therefore, by Lemma ~\ref{lemmaone}, $a^{\ell} \in
D'$. Likewise, we can take a maximal subfield $K$ of $Q$, and by
looking at $1 \otimes a \in K \otimes_F D$, we obtain $a^k \in D'$
where $k = [K:F] = \ind(Q)$. 
Since $\gcd(\ell, k) = 1$, it follows that $a \in
D'$, completing Step 3.

\smallskip

Step 4.  We now prove the theorem in full.  Let $F$ be  a henselian 
field, and 
 let $D$ be a tame $F$-central division algebra. 
If $\chr (\ov F) =0$, then $D$ is strongly tame over
$F$, so the theorem holds for $D$ by Step 2. If $\chr (\ov F) = \ov
p\ne 0$ we have $D \cong  P \otimes_F Q$ where $P$ is the 
$\ov p$-primary
component of $D$ and $Q$ is the tensor product of all the other
primary components of $D$.  So, $\gcd(\ind(P), \ind(Q)) = 1$.
For any subfield $L$ of $F$, $Q_L$ is
tame over $L$ with $\ind(Q_L) | \ind (Q)$, which is prime to $\ov
p$. So, $Q_L$ is strongly tame over $L$, and the theorem holds for
$Q_L$ by Step~2. On the other hand, $P_L$ is tame over $L$ and
$\ind(P_L)$  is a power of $\ov p$; hence, $P_L$~is
inertially split. Hence, by Step 1 the theorem holds for $P_L$.
Thus, by Step 3 the theorem holds for $D$.
\end{proof}

The following proposition will complete the proof of the Congruence
Theorem.

\begin{prop}\label{prophelp}
Let $F$ be a henselian valued field, and 
let $D$ be an $F$-central division
algebra which is defectless over $F$. Let $L$ be a field, $F \sbeq L
\sbeq D$, and let $C= C_D(L)$, so $L \otimes_F D \cong M_{\ell}(C)$
where $\ell = [L:F]$. Take any $a \in 1+M_D$. Then for $1 \otimes a
\in L \otimes_F D \cong M_{\ell}(C)$, 
$$
\ddet (1 \otimes a) \in 1+M_C \ (\mathrm{mod} \ C'),
$$ 
where $\ddet$ denotes the
Dieudonn\'{e} determinant.
\end{prop}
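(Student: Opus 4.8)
The plan is to compute $\ddet(1\otimes a)$ explicitly by realizing the isomorphism $L\otimes_F D\conggr M_\ell(C)$ through the left regular representation of $D$ on itself as a right $C$-vector space. First I would record the basic structure: $C=C_D(L)$ is a division ring (being a subring of $D$), the valuation $v$ of $D$ restricts to valuations on $L$ and on $C$ with $M_C=M_D\cap C$, and $C$ is defectless over $F$ --- applying the Fundamental Inequality to each step of the tower $F\subseteq C\subseteq D$ and using $[D:F]=[\ov D:\ov F]\,|\Gamma_D:\Gamma_F|$ forces equality throughout, so $[C:F]=[\ov C:\ov F]\,|\Gamma_C:\Gamma_F|$. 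Hence $[\gr(D):\gr(C)]=[\gr(D):\gr(F)]\big/[\gr(C):\gr(F)]=[D:F]/[C:F]=\ell$, and, $\gr(D)$ being a graded $\gr(C)$-vector space (so graded free of rank $\ell$), I can pick $v_1,\dots,v_\ell\in D^*$ whose images $\widetilde v_1,\dots,\widetilde v_\ell$ form a homogeneous right $\gr(C)$-basis of $\gr(D)$; a routine leading-term argument then shows $v_1,\dots,v_\ell$ is a right $C$-basis of $D$. Set $\delta_i=v(v_i)$. Left multiplication of $D$ on itself, written as right $C$-linear operators in the basis $v_1,\dots,v_\ell$, gives an $F$-algebra embedding $\psi\colon D\to M_\ell(C)$ that extends to an $L$-algebra isomorphism $L\otimes_F D\xrightarrow{\sim}M_\ell(C)$; since by Skolem--Noether any two such isomorphisms differ by an inner automorphism of $M_\ell(C)$, and inner automorphisms fix the Dieudonn\'e determinant, it suffices to prove $\ddet(\psi(a))\in 1+M_C\ (\mathrm{mod}\ C')$ for $a\in 1+M_D$.

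Write $(a-1)v_j=\sum_i v_i n_{ij}$ with $n_{ij}\in C$, so that $\psi(a)=I+(n_{ij})$. Because $\{\widetilde v_i\}$ is $\gr(C)$-linearly independent there is no cancellation of leading terms, hence $v\big(\sum_i v_i n_{ij}\big)=\min_i\big(\delta_i+v(n_{ij})\big)$; together with $v\big((a-1)v_j\big)=v(a-1)+\delta_j>\delta_j$ this yields $v(n_{ij})>\delta_j-\delta_i$ for all $i,j$, and in particular $n_{ii}\in M_C$. These inequalities are exactly what lets Gaussian elimination proceed: the pivot $1+n_{11}\in 1+M_C$ is a unit of $V_C$, and after using it to clear the remaining entries of the first column and the first row by elementary row and column operations, the updated $(i,j)$-entry for $i,j\ge 2$ equals the old one minus $n_{i1}(1+n_{11})^{-1}n_{1j}$, whose value exceeds $(\delta_1-\delta_i)+(\delta_j-\delta_1)=\delta_j-\delta_i$; thus the residual $(\ell-1)\times(\ell-1)$ block again has diagonal entries in $1+M_C$ and off-diagonal $(i,j)$-entries of value $>\delta_j-\delta_i$. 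Iterating, elementary operations carry $\psi(a)$ to $\mathrm{diag}(d_1,\dots,d_\ell)$ with every $d_i\in 1+M_C$. Since elementary operations do not alter the Dieudonn\'e determinant, and $\ddet\big(\mathrm{diag}(d_1,\dots,d_\ell)\big)=d_1\cdots d_\ell\cdot C'$ with $d_1\cdots d_\ell\in 1+M_C$ (as $1+M_C$ is a subgroup of $C^*$), we obtain $\ddet(1\otimes a)=\ddet(\psi(a))\in(1+M_C)C'/C'$, which is the assertion.

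The main obstacle is the bookkeeping in the elimination step --- verifying that the ``value gauge'' on $M_\ell(C)$ determined by the degrees $\delta_i$ of the chosen basis is respected by the elementary transformations used in the diagonalization, so that each successive pivot stays in $1+M_C$; put differently, that the image of $\psi(a)$ in the graded endomorphism ring of $\gr(D)$ over $\gr(C)$ is the identity (because $\widetilde a=1$). The remaining ingredients --- the Skolem--Noether reduction to the left regular representation, the defectlessness of $C$ over $F$, and the standard behaviour of the Dieudonn\'e determinant on diagonal matrices and under elementary operations --- are routine.
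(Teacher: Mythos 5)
Your proposal is correct and follows essentially the same route as the paper: choose a splitting ($\gr(C)$-homogeneous-lift) basis of $D$ over $C$ using defectlessness, observe that the matrix of $1\otimes a$ then satisfies the value inequalities $v(n_{ij})>\delta_j-\delta_i$ with diagonal in $1+M_C$, and check that Gaussian elimination preserves these inequalities so every pivot stays in $1+M_C$. The only differences are presentational: the paper cites \cite{RTW} for the splitting base rather than lifting a homogeneous basis directly, and it packages your Schur-complement bookkeeping as the statement that the set $1+J$ (for $J$ an ideal of the gauge ring $R$) is closed under multiplication and contains the unipotent row-operation matrices.
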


\begin{proof}
% [Proof of Proposition ~\ref{prophelp}]
$D$ is an $L$-$D$ bimodule via multiplication in $D$. Hence (as $L$
is commutative) $D$ is  a right $L \otimes_F D$-module, 
with module
action  given by $a (\sum \ell_i \otimes d_i) = \sum \ell_i a
d_i$. $D$ is a simple right $L \otimes_F D$-module, since it is
already a simple right $D$-module. Hence, by Wedderburn's Theorem,
$L \otimes_F D \cong \End_{\De}(D)$, where $\De = \End_{L \otimes_F
D}(D)$ (acting on $D$ on the left). Since (for $D$ acting on $D$ on
the right) $\End_D(D) \cong D$ (elements of $D$ acting on $D$ by
left multiplication) $\End_{L \otimes_F D}(D)$ consists of left
multiplication by elements of $D$ which commute with the left action
of $L$ on $D$, i.e., $\De \cong C_D(L) = C$. So, $L \otimes_F D \cong
\End_{\De}(D) \cong \End_C (D) \cong M_{\ell}(C)$ where $\ell =
[D:C]=[L:F]$. The last isomorphism is obtained by choosing a base
$\{ b_1, \ldots , b_{\ell} \}$ of $D$ as a left $C$-vector space $(D
= Cb_1 \oplus \ldots \oplus Cb_{\ell})$ and writing the matrix for
an element of $L \otimes_F D$ acting $C$-linearly on $D$ (on the
right) relative to this base, with matrix entries in $C$.

Because $D$ is defectless over  $F$, $D$ is also defectless over
$C$, i.e., $[D:C] = [\gr(D):\gr(C)]$; thus, the valuation $w$ on $D$ extending
$v$ on $F$ is a $w|_C$-norm by \cite{RTW}, Cor.~2.3. 
This means that we can choose our base
$\{b_1, \ldots, b_{\ell}\}$ to be a splitting base for $w$ over
$w|_C$, i.e., satisfying, for all $c_1, \ldots , c_{\ell} \in C$,
\begin{equation}\label{mindef}
\textstyle w\big( \sum\limits_{i=1}^{\ell} c_i b_i\big) \ = \ 
\min\limits_{1\leq i \leq \ell} \big(w(c_i) +
w(b_i)\big).
\end{equation}
Let $\gamma_i = w(b_i)$ for $1\le i\le \ell$.

Let
\begin{align*}
R    \ &=  \ \{A = (a_{ij})\in M_\ell(C) : w(a_{ij}) \ge 
\gamma_i - \gamma_j\text{ for all  }i,j\}; 
\qquad\qquad\qquad\qquad\qquad\qquad\\
J   \ & =   \ \{A = (a_{ij})\in M_\ell(C) : w(a_{ij}) > 
\gamma_i - \gamma_j\text{ for all  }i,j\};\\
1+J   \ &=  \ \{I_\ell + A : A \in J \}, \ \ 
\text{where $I_\ell\in M_\ell(c)$ is the identity matrix}.
\end{align*}
Because $w$ is a valuation, it is easy to check that $R$ is a subring 
of $M_\ell(C)$ and $J$ is an ideal of~$R$.  Therefore, $1+J$ is closed 
under multiplication.  Take any $f\in \End_C(D)$ (which acts on~$D$ 
on the right), and let $A = (a_{ij})$ be the  matrix of $f$ relative 
to the $C$-base $\{b_1, \ldots b_\ell\}$ of~$D$, i.e., 
$b_if = \sum _{j=1}^\ell a_{ij}b_j$ for all $i$.  So, 
$$
w(b_if) \ = \ w\big(\textstyle \sum\limits_{j=1}^\ell a_{ij}b_j\big) 
 \ =  \ \min\limits_{1\le j\le \ell}\big(w(a_{ij}) + \gamma_j\big).
$$
Thus, $w(b_if) \ge w(b_i) = \gamma_i$ iff 
$w(a_{ij}) \ge\gamma_i - \gamma_j$ for 
$1\le j\le \ell$.  
From this it is clear that $A= (a_{ij}) \in R$ iff 
$w(b_if) \ge w(b_i)$ for all $i$. Analogously, $A\in J$ iff 
$w(b_if) > w(b_i)$
for all $i$. 

Now, take any $u \in 1+M_D$, say $u = 1+m$ with $m\in M_D$.  
Then, $1\otimes m \in L\otimes_F D$ corresponds to $\rho_m\in 
\End_C(D)$, where $d\rho_m = dm$ for all $d\in D$.  Let 
$S\in M_\ell(C)$ be the matrix for $\rho_m$.  Since $w(m) >0$,
we have 
$$
w(b_i\rho_m) \ = \ w(b_im) \ = \ w(b_i) + w(m)  \ > \ w(b_i)
\quad\text{for all $i$}.
$$
Hence, $S\in J$, as we saw above.  

 {\it Claim.} For any matrix $T \in 1+J$, we have
$\ddet(T) \in 1+M_C\ (\text{mod} \ C')$. 

The Proposition follows at once from this claim, since the 
matrix for $1\otimes (1+m)$ is ${I_\ell + S \in 1+J}$. 

{\it Proof of Claim.}  Take $T\in 1+J$.
The idea is that the process of bringing $T$ to 
upper triangular form by row operations is 
carried out entirely within $1+J$.  Write 
$T = I_\ell + Z$ with $Z= (z_{ij}) \in J$.  
So, $w(z_{ii}) >\gamma_i-\gamma_i = 0$ for all $i$, 
i.e., $z_{ii} \in M_C$.  Thus, for all $i,j$, we have
$$
t_{ii} \ = \  1+z_{ii}  \,\in \, 1+M_C \qquad\text{and}\qquad 
t_{ij} \ = \ z_{ij}, \text{  \, so  \, $w(t_{ij}) \ > \ 
\gamma_i - \gamma_j$  \, when  \, $i\ne j$}. 
$$

Fix $k$ with $1\le k \le \ell -1$.  Since $t_{kk} \in 1+M_C$,
$w(t_{kk}) = 0$, so $t_{kk}\ne 0$.  Let $Y = (y_{ij}) \in M_\ell(C)$
be the matrix for the row operations to bring $0$'s to all entries
in the $k$-th column of $T$ below the main diagonal, i.e., 
the $i$-th row of $YT$ is: $\text{(the $i$-th row of $T$) $-$ 
($t_{ik}t_{kk}^{-1}\cdot$ the $k$-th row of $T$)}$
for $k<i\le \ell$ (with the first $k$ rows unchanged). 
So, $y_{ii} = 1$ for all $i$; $y_{ik} = -t_{ik}t_{kk}^{-1}$
for our fixed $k$ and all $i$ with $k<i\le \ell$; and 
$y_{ij} = 0$ otherwise. For $i>k$,we have 
$$
w(y_{ik}) \ = \   w(t_{ik}) - w(t_{kk}) \  >
 \ \gamma_i - \gamma_k.
$$
Hence, $Y \in 1+J$ and $Y$ is a unipotent lower triangular matrix.
Since $1+J$ is closed under multiplication, we have $YT\in 1+J$.
To bring $T$ to upper triangular form we apply the row operations 
successively 
for columns $1$ to $\ell -1$. We end up with an upper triangular
matrix $T' = Y_{\ell-1} Y_{\ell-2}\ldots Y_2Y_1 T \in 1+J$,
where each $Y_k\in 1+J$ is the matrix for zeroing the $k$-th column 
as described above, but applied to the matrix $Y_{k-1}\ldots Y_1T 
\in 1+J$ (not to $T$).  Say $T'= (t_{ij}')$. Each $Y_k$ is unipotent and 
lower triangular, so $\ddet(Y_k) = 1\in C^*/C'$, 
So, $\ddet(T') = \ddet(Y_{k-1})\ldots \ddet(Y_1) \ddet(T) = 
\ddet(T)$ in $C^*/C'$.  Since $T'$ is upper triangular with 
each $t_{ii}' \in 1+M_C$, we have 
$$ \ddet (T)   \ = \  \ddet (T')  \ = \  t'_{11}
\ldots t'_{ \ell \ell}  \, \in \,  1+M_C \textrm{ (equality modulo $C'$),}
$$
proving the Claim.
\end{proof}


\begin{thebibliography}{1000}

\bibitem[AS]{as} S. A. Amitsur, D. J. Saltman, {\it Generic Abelian 
crossed products 
and $p$-algebras},  J. Algebra, {\bf 51}  (1978),  76--87.

\medskip

%\bibitem[B$_1$]{bou} M. Boulagouaz, \emph{An introduction to the Galois theory for
%graded fields}, Algebra and number theory (Fez), 21--31, Lecture
%Notes in Pure and Appl. Math., 208, Dekker, New York, 2000.

\bibitem[B]{boulag} M. Boulagouaz, \emph{Le gradu\'e d'une 
alg\`ebre \`a division valu\'ee}, Comm. Algebra, {\bf 23} (1995),  4275--4300.

\medskip

\bibitem[BM]{bm} M. Boulagouaz, K. Mounirh, {\it Generic abelian crossed 
products and graded division algebras}, pp.~33--47 in   
{\it Algebra and Number Theory}, eds. M. Boulagouaz and J.-P. Tignol,
 Lecture 
Notes in Pure and Appl. Math., {\bf 208}, Dekker, New York, 2000.

\medskip

%% \bibitem[CO]{caenvan} S. Caenepeel, F. van Oystaeyen, 
%% {\it Brauer Groups 
%% and the Cohomology of Graded Rings}, Marcel Dekker, New York, 1988.
%% 
%% \medskip

\bibitem[D]{draxl} P. Draxl, \emph{Skew Fields}, London 
Math. Soc.
Lecture Note Series, {\bf 81}, Cambridge University Press, Cambridge,
1983.

\medskip

\bibitem[EP]{EP} A. J. Engler, A. Prestel, {\it Valued Fields}, 
Springer-Verlag, Berlin, 2005.

\medskip

\bibitem[E]{ershov} Yu. Ershov, \emph{Henselian valuations of
division rings and the group $SK_1$}, Mat. Sb. (N.S.), {\bf 117}
 (1982), 60--68 (in Russian); English trans.,
 Math USSR-Sb. {\bf 45} (1983), 63--71.

\medskip

\bibitem[G]{gille} P. Gille, 
Le probl\'eme de Kneser-Tits, expos\'e Bourbaki, No. 983, 
to appear in Ast\'erisque; preprint available at:
{\tt http://www.dma.ens.fr/$\sim$gille/}~.

\medskip

%% \bibitem[HM]{hahn} A. J. Hahn, O. T.  O'Meara, {\it The 
%% Classical Groups
%% and $K$-Theory}, Springer-Verlag, Berlin, 1989.

%% \medskip

\bibitem[H$_1$]{sk12001} R. Hazrat, 
{\it $\SK$-like functors for division
  algebras}, J. Algebra, {\bf 239} (2001), 573--588.

\medskip

\bibitem[H$_2$]{h} R. Hazrat, 
\emph{Wedderburn's factorization theorem,
application to reduced $K$-theory},  
Proc. Amer. Math. Soc.,  {\bf 130}
(2002), 311--314.

\medskip

\bibitem[H$_3$]{haz} R. Hazrat, \emph{On central series of the 
multiplicative group of division rings},
Algebra Colloq., {\bf 9}  (2002), 99--106.

\medskip

\bibitem[H$_4$]{azumayask1} R. Hazrat, {\it $\SK$ of Azumaya 
algebras over Hensel pairs}, to appear in Math. Z.; preprint 
available  (No.~282) at:  
{\tt http://www.math.uni-bielefeld.de/LAG/} .

%% \bibitem[H$_4$]{azumayask1} R. Hazrat, \emph{Reduced $K$-theory of
%% Azumaya algebras}, J. Algebra, {\bf 305} (2006), 687--703.

\medskip

%% \bibitem[HMM]{hmm} R. Hazrat, M. Mahdavi-Hezavehi, B. Mirzaii,
%% {Reduced $K$-theory and the group
%% $G(D)=D^{*}/F^{*}D'$},
%% pp.~403--409 in
%% {\it Algebraic $K$-Theory and its Applications}, ed. H. Bass,
%% World Sci. Publishing, River Edge, NJ, 1999.


%% \medskip

\bibitem[HW]{hazwadsworth} R. Hazrat, A. R Wadsworth,
 \emph{On maximal subgroups of the multiplicative 
group of a division algebra}, to appear in J.~Algebra;
preprint available  (No.~260) at:  
{\tt http://www.math.uni-bielefeld.de/LAG/}~.

\medskip

\bibitem[HwW$_1$]{hwalg} Y.-S. Hwang, A. R. Wadsworth, 
\emph{Algebraic extensions of graded and valued fields},
Comm.  Algebra, {\bf 27} (1999), 821--840.

\medskip

\bibitem[HwW$_2$]{hwcor} Y.-S. Hwang, A. R. Wadsworth, 
\emph{Correspondences
between valued division algebras and graded division algebras},
J. Algebra, {\bf 220} (1999), 73--114.

\medskip

\bibitem[JW]{jw} B. Jacob, A. Wadsworth, 
\emph{Division algebras over
Henselian fields.}, J. Algebra,  
{\bf 128}  (1990), 126--179.

\medskip

\bibitem[J]{j} N. Jacobson, {\it Finite-Dimensional Division Algebras
over Fields}, Springer-Verlag, Berlin, 1996.

\medskip

\bibitem[K]{knus} M. -A. Knus, {\it Quadratic and Hermitian Forms 
over Rings}, Springer-Verlag, Berlin, 1991.

\medskip

\bibitem[KMRT]{kmrt} M. -A. Knus, A. Merkurjev, M. Rost, J.-P. Tignol,
{\it The Book of Involutions}, AMS Coll. Pub., {\bf 44}, 1998.

\medskip

\bibitem[L]{lam} T.-Y. Lam, {\it A First Course in Noncommutative 
Rings}, Graduate Texts in Mathematics, {\bf 131}, Springer-Verlag, 
New York, 1991.

\medskip

\bibitem[LT]{lewistig} D. W. Lewis, J.-P. Tignol, 
\emph{Square class groups and Witt rings of central simple algebras}, 
J. Algebra, {\bf 154} (1993), 360--376.

\medskip

\bibitem[Mc$_1$]{mc1} K. McKinnie, {\it Prime to $p$ extensions of 
the generic abelian crossed product},  J. Algebra, {\bf 317}  
(2007), 813--832. 

\medskip

\bibitem[Mc$_2$]{mc2} K. McKinnie, {\it Indecomposable $p$-algebras 
and Galois subfields in generic abelian crossed products},  
J.~Algebra,  {\bf 320}  (2008), 1887--1907.

\medskip

\bibitem[Mc$_3$]{mc3} K. McKinnie, {\it Degeneracy and 
decomposability in abelian crossed products}, preprint, 
arXiv: 0809.1395. 

\medskip


\bibitem[Mer]{merk} A. S. Merkurjev, \emph{$K$-theory of simple
  algebras},  pp.~65--83 in {\it $K$-theory and Algebraic Geometry:
connections with quadratic forms and division algebras}, 
eds. B. Jacob and A. Rosenberg,
Proc. Sympos. Pure Math., {\bf 58}, Part 1,
  Amer. Math. Soc., Providence, RI, (1995), 65--83.
  
 \medskip 
  

\bibitem[Mor]{M} P. Morandi, {\it The Henselization of a valued division
algebra}, J. Algebra, {\bf 122} (1989), 232--243. 

\medskip

\bibitem[M$_1$]{mou} K. Mounirh, {\it Nicely semiramified division 
algebras over Henselian fields},  Int. J. Math. Math. Sci.,
{\bf  2005}  (2005),  571--577.

\medskip

\bibitem[M$_2$]{mou2} K. Mounirh, {\it Nondegenerate semiramified 
valued and graded division algebras}, to appear in 
Comm. Algebra; an earlier version is part of preprint No. 256 
at: {\tt http://www.math.uni-bielefeld.de/LAG/}~. 

\medskip

\bibitem[MW]{MWad} K. Mounirh,  A. R.  Wadsworth,
 \emph{Subfields of nondegenerate tame
semiramified division algebras}, preprint in preparation.  

\medskip

\bibitem[PS]{pansus}  I. A. Panin, A. A. Suslin,
\emph{On a conjecture of Grothendieck concerning Azumaya 
algebras}, St. Petersburg Math. J., {\bf 9}  (1998), 851--858.
\medskip

\bibitem[P$_1$]{platonov} V. P. Platonov, 
\emph{The Tannaka-Artin problem and reduced $K$-theory},
Izv. Akad. Nauk SSSR Ser. Mat., {\bf 40} (1976), 
227--261 (in Russian); English trans., Math. 
USSR-Izv., {\bf 10} (1976), 211--243.

\medskip

\bibitem[P$_2$]{platsurvey} V. P. Platonov, 
\emph{Algebraic groups and reduced $K$-theory},
pp. 311--317 in 
{\it Proceedings of the ICM (Helsinki 1978)}, ed. O. Lehto, 
Acad. Sci. Fennica, Helsinki.

\medskip

\bibitem[PY]{py} V. P. Platonov, V. I. Yanchevski\u\i, \emph{$\SK$ for division
rings of noncommutative rational functions.} Dokl. Akad. Nauk SSSR,
{\bf 249}  (1979),  1064--1068 (in Russian); English trans., Soviet Math.
Doklady, {\bf 20}  (1979), 1393--1397.

\medskip

\bibitem[R]{rei} I. Reiner,  {\it Maximal Orders}, Academic Press, 
New York, 1975.

\medskip

\bibitem[RTW]{RTW}  J.-F. Renard, J.-P. Tignol., A. R. Wadsworth, 
{\it Graded Hermitian forms and Springer's theorem}, 
Indag. Math., N.S., {\bf 18} (2007), 97--134.

\medskip

\bibitem[S$_1$]{s1} D. J. Saltman, {\it Noncrossed product $p$-algebras 
and Galois $p$-extensions},  J. Algebra, {\bf  52}  (1978), 302--314.

\medskip

\bibitem[S$_2$]{saltman} D. J. Saltman, {\it Lectures on Division
Algebras}, Reg. Conf. Series in Mathematics,  no. 94, AMS,
Providence, 1999.

\medskip

%% \bibitem[Sch]{sch} W. Scharlau, {\it Quadratic and Hermitian Forms},
%% Grundlehren der Mathematischen Wissenschaften, 
%% {\bf 270}, Springer-Verlag,
%% Berlin, 1985.

%% \medskip


\bibitem[St]{stuth} C. Stuth, \emph{A generalization of the Cartan-Brauer-Hua
theorem}, Proc. Amer. Math Soc., {\bf 15} (1964), 211--217.

\medskip


\bibitem[Sus]{s} A. A. Suslin, \emph{$\SK$ of division algebras and
  Galois cohomology}, pp.~75--99 in {\it Algebraic $K$-theory}, 
ed. A.~A.~ Suslin, Adv. Soviet Math.,
  {\bf 4}, Amer. Math. Soc., Providence, RI, 1991.

\medskip

\bibitem[T]{tignol} J.-P. Tignol, {\it Produits crois\'es ab\'eliens}, 
  J. Algebra,  {\bf 70}  (1981), 420--436.

\medskip

\bibitem[TW]{tigwad} J.-P. Tignol, A. R. Wadsworth, 
\emph{Value functions and associated graded rings for 
semisimple algebras}, 
to appear in Trans. Amer. Math. Soc; preprint available (No. 247) 
at: {\tt http://www.math.uni-bielefeld.de/LAG/} .

\medskip


\bibitem[W$_1$]{wad87} A. R. Wadsworth, \emph{Extending valuations to
finite dimensional division algebras}, Proc. Amer. Math. Soc., {\bf 98} (1986), 20--22.

\medskip

\bibitem[W$_2$]{wadval} A. R. Wadsworth, \emph{Valuation theory on finite dimensional
division algebras}, Fields Inst. Commun. {\bf 32}, Amer. Math.
Soc., Providence, RI, (2002), 385--449.

\medskip

%\bibitem[W]{wall} G. E. Wall, The structure of a unitary 
%factor group, IHES Publ. Math., %{\bf 1} (1959), 7--23.


\bibitem[We]{wedd} J. H. M. Wedderburn, 
\emph{On division algebras}, Trans. Amer. Math. Soc. 
{\bf 15}, (1921), 162--166.

\medskip

\bibitem[Y]{y} V. I. Yanchevski\u\i, 
\emph{Reduced unitary $K$-Theory and
division rings over discretely valued Hensel fields},
Izv. Akad. Nauk SSSR Ser. Mat., {\bf 42} (1978), 879--918
(in Russian);
English trans.,  
Math. USSR Izvestiya, {\bf 13} (1979), 
175--213.



\end{thebibliography}
\end{document}